\newcommand{\mc}{\mathcal}
\newcommand{\mb}{\mathbb}
\newcommand{\pt}{\partial}
\newcommand{\br}{\mathbb{R}}
\newcommand{\bz}{\mathbb{Z}}
\newcommand{\bt}{\mathbb{T}}
\newcommand{\eps}{\varepsilon}
\newcommand{\e}{\varepsilon}
\renewcommand{\r}{\rho}
\renewcommand{\(}{\left(}
\renewcommand{\)}{\right)}
\renewcommand{\[}{\left[}
\renewcommand{\]}{\right]}
\newcommand{\na}{\nabla}
\newcommand{\PP}{\mb{P}}
\newcommand{\s}{\mc{S}}
\newtheorem{thm}{Theorem}
\newtheorem{lem}[thm]{Lemma}
\newtheorem{prop}[thm]{Proposition}
\newtheorem{defi}[thm]{Definition}
\newtheorem{rks}[thm]{Remarks}
\newtheorem{hyp}[thm]{Assumption}
\def\be{\begin{equation}}
\def\ee{\end{equation}}
\def\bea{\begin{eqnarray}}
\def\eea{\end{eqnarray}}
\newcommand{\di}{\mathrm{d}}
\numberwithin{thm}{section}
\numberwithin{equation}{section}
\author{}
\title{}
\begin{document}
\title{A mean field approach to the quasineutral limit for the Vlasov-Poisson equation }

\author{
Megan Griffin-Pickering 
  \thanks{University of Cambridge, DPMMS Centre for Mathematical Sciences, Wilberforce Road, Cambridge CB3 0WB, UK. Email: \textsf{m.griffin-pickering@maths.cam.ac.uk}}
  \and
Mikaela Iacobelli
  \thanks{Durham University, Department of Mathematical Sciences, Lower Mountjoy, Stockton Road, Durham DH1 3LE, UK. Email: \textsf{mikaela.iacobelli@durham.ac.uk}}
}

\maketitle

\begin{abstract}
This paper concerns the derivation of the Kinetic Isothermal Euler system in dimension $d\ge1$ from an $N$-particle system of extended charges with Coulomb interaction.
This requires a combined mean field and quasineutral limit for a regularized $N$-particle system.
\end{abstract}

\section*{Introduction}

In this article, we consider the mathematical relationships between several models for plasma.
A plasma is a gas that has undergone a process of ionisation, whereby the particles making up the gas separate into electrons and positively charged ions. Plasma is sometimes referred to as the fourth state of matter and is abundant in our universe where it is found in stars, nebulae and many other astrophysical entities. On Earth, plasma occurs naturally in lightning and auroras, and has also been created artificially in many scientific and industrial applications including TV screens, fluorescent lights and nuclear fusion research. For further information on the physics and applications of plasmas, see for example \cite{Chen}.

One way to model a plasma is by using the following system of partial differential equations, known as the \emph{Vlasov-Poisson} system and written here for the case when the spatial domain is the $d$-dimensional torus $\bt^d$ for some $d \geq 1$:
\be \label{Eqn_VP}
(VP) : = \left\{ \begin{array}{ccc}\partial_t f+v\cdot \nabla_x f+ E\cdot \nabla_v f=0,  \\
E=-\nabla_x U, \\
- \Delta_x U=\rho_{f} - 1 ,\\
f\vert_{t=0}=f_{0}\ge0,\ \  \int_{\mathbb{T}^d \times \mathbb{R}^d} f_{0}\,dx\,dv=1,
\end{array} \right.
\ee
where for an arbitrary function $f$ we define the density
\be \label{Def_rho}
\r_f(t, x) = \int_{\br^d} f(t, x,v) \di v .
\ee
The system describes the evolution of the probability density function $f(t,x,v)$ of a typical electron in phase space (both position and velocity) rather than the individual trajectories of each electron. The ions are assumed to be heavy and so fixed in a uniform distribution over the course of the time interval of observation. 
The Vlasov-Poisson system consists of a Liouville equation for the density coupled in a nonlinear way with a Poisson equation describing the electric field spontaneously generated by the charged particles that constitute the plasma. This system of equations gives a good description of plasmas that are weakly collisional, under an electrostatic approximation. A plasma may be assumed to be weakly collisional if it is sufficiently hot and rarefied. In this case, close passes between particles are relatively infrequent and the main way that particles exert influence on each other is through the electromagnetic fields that they collectively generate. This is a long-range type interaction. The electrostatic assumption means that the magnetic field is assumed to be stationary in time (in fact zero in this case) and so the Liouville equation is coupled to a Poisson equation rather than the full Maxwell equations.

The motivation for this work comes from two key problems related to the Vlasov-Poisson system. The first concerns the derivation of the Vlasov-Poisson system from the underlying microscopic dynamics. From the point of view of classical physics, on a microscopic level a plasma can be thought of as a cloud of particles influencing each other through electromagnetic interaction. Mathematically these dynamics can be described by a system of ODEs derived from Newton's laws of motion. The principle behind the Vlasov-Poisson equation is that if the number of particles is sufficiently large and the system is observed at a sufficiently large scale, then the plasma looks more like a continuous distribution than a collection of discrete particles. Heuristically we would expect that by zooming out from the ODE system in an appropriate fashion (that is, by choosing a suitable scaling) one would arrive at the Vlasov-Poisson system. The goal is to show this rigorously, and we will formulate the precise mathematical meaning of this in more detail in the next section. In general this type of procedure is known as a \textit{mean field limit}.

The second problem concerns the \textit{quasineutral limit}. A plasma has a characteristic parameter called the Debye length. This may be thought of intuitively as the scale at which charge separation occurs; this means that when the plasma is observed at scales larger than the Debye length, the proportion of electrons and ions appears to be roughly even and thus the plasma locally appears approximately neutral. If the Debye length is much shorter than the typical scale at which the plasma is observed, then the plasma is called quasineutral. The assumption of quasineutrality may be safely assumed to hold for the purposes of many physical applications, and indeed some authors include it as part of the definition of a plasma (see for example \cite{Chen}), since in this regime an ionised gas will typically exhibit truly plasma-like behaviour as opposed to behaving like a neutral gas. It is therefore of interest to consider the limit in which the ratio of the Debye length to the typical observation length tends to zero. This is known as the quasineutral limit. Mathematically it corresponds to a certain singular limit for the Vlasov-Poisson system, in which the formal limiting system is the \emph{Kinetic Isothermal Euler} system:
\be \label{Eqn_KIE}
(KIE) :=\left\{ \begin{array}{ccc}\partial_t f +v\cdot \nabla_x f+ E\cdot \nabla_v f=0,  \\
E= -\nabla_x U, \\
\rho = 1,\\
f\vert_{t=0}=f_{0}\ge0,\ \  \int_{\mathbb{T}^d \times \mathbb{R}^d} f_0\,dx\,dv=1.
\end{array} \right.
\ee
We note that for mono-kinetic data the Kinetic Isothermal Euler system reduces to the incompressible Euler equation. In particular no global existence results are available in dimension $d\ge3.$

Since the Debye length is short in many applications, provided that the quasineutral limit is valid it is then reasonable to model a plasma using the KIE system rather than the Vlasov-Poisson system. This is advantageous since \eqref{Eqn_KIE} is simpler to solve numerically than \eqref{Eqn_VP}. Unfortunately this formal limit has only been justified for some special classes of initial data and is known to be false in general due to instabilities inherent to the physical system under consideration. In this article we will work within a class of data for which the quasineutral limit does hold rigorously.

Our aim in this article is to study the interplay between these two limits and ultimately derive the Kinetic Isothermal Euler system from a particle system, with the Vlasov-Poisson system as an intermediate step. In the next two sections we introduce the two limits in more detail.

\subsection*{Mean field limit for the Vlasov-Poisson equation}

The goal of a mean field limit in this context is to derive equation \eqref{Eqn_VP} from an underlying associated ODE system. To begin with, consider $N$ indistinguishable classical particles (of equal mass) interacting via Newton's laws of motion in $\br^d$. Let $x_i(t) \in \br^d$ denote the position of the $i$th particle at time $t$ and $v_i(t) \in \br^d$ its velocity. The combined $2d$-dimensional vector $(x_i,v_i)$ is referred to as the particle's position in phase space. Under an appropriate scaling, the phase space positions satisfy a system of ODEs of the form
\be \label{ODE_W}
\left \{
\begin{array}{l}
\dot{x}_i(t) = v_i , \\
 \dot{v}_i(t) =-\frac{1}{N} \sum_j \na W(x_i(t)-x_j(t)) .
\end{array} \right.
\ee
This is a Hamiltonian system. Particles are assumed to interact pairwise and the force between any pair is derived from a potential proportional to $W$. The factor $\frac{1}{N}$ comes from the rescaling of the system, which is chosen with the aim of obtaining a meaningful limit when $N$ tends to infinity. This ODE system gives a microscopic description of the system since the trajectory of each particle is tracked individually.

In many relevant physical applications the number of particles $N$ is extremely large. For example, in the case of gases or plasma $N$ may be of order $10^{23}$. An ODE system of such large size is impractical to solve even numerically. Furthermore, it is difficult to extract meaningful qualitative information about the behaviour of such a system from its microscopic description. This motivates the use of a coarser, \emph{mesoscopic} level of description. In this case the system is described by a probability density function corresponding to the distribution of a `typical' particle in phase space. In order to validate this type of description we would like to show that it arises as the $N \to \infty$ limit of the ODE system \eqref{ODE_W}. 

The first step is to formulate this limit mathematically. The solution of \eqref{ODE_W} takes values in the $N$-particle phase space $(\br^d \times \br^d)^N$, which has dimension dependent on the number of particles. Thus, as it stands, solutions corresponding to different numbers of particles cannot be directly compared. In response to this difficulty, we introduce the corresponding empirical measure
\be
\mu^N_t : = \frac{1}{N} \sum_{i=1}^N \delta_{(x_i(t), v_i(t))},
\ee
where $\delta_z$ denotes a Dirac mass centred at $z \in \br^d \times \br^d$. For all $N$ $\mu^N_t$ belongs to the space of measures on $\br^d \times \br^d$, and so it is meaningful to say that the sequence $\mu^N_t$ has a limit, for instance in the sense of weak convergence of measures.

Since $((x_i(t),v_i(t)))_{i=1}^N$ satisfy \eqref{ODE_W}, it follows that the corresponding equation for $\mu^N$ is
\be
\pt_t \mu^N_t+v\cdot \na_x\mu^N_t+F^N(t,x)\cdot \na_v \mu^N_t=0, \qquad F^N:=-\(\na W \ast_{x,v} \mu^N_t\) .
\ee
Hence if $\mu^N$ has a limit $\mu$ as $N \to \infty$ and $\mu$ may be written in the form $\mu_t(\di x \di v) = f(t,x,v) \di x \di v$, then formally we expect that $f$ should be a solution of the \emph{nonlinear Vlasov equation}
\begin{equation}
\label{VE}
(VE):= \left\{ \begin{array}{ccc}\pt_t f+v\cdot \pt_x f+ F(t,x)\cdot \pt_v f=0,  \\
F= -\na W \ast_{x} \rho \qquad \rho(t,x)=\int f(t,x,v)dv\,\qquad \int \rho(t,x)dx=1.\\
\end{array} \right.
\end{equation}
In this paper we consider the particular case of the Vlasov-Poisson equation \eqref{Eqn_VP}, in which $W$ is taken to be the Green kernel of the Laplacian on $\bt^d$. Note that by changing the sign of the force this system may be be used to describe both plasmas (electromagnetic interaction) and systems of stars (gravitational interaction). 
In this case the force kernel has a strong singularity of order $|x|^{-(d-1)}$ which makes the rigorous justification of the mean field limit challenging. 

For a more detailed introduction to mean field limits as a mathematical problem, we refer the reader to the lecture notes by Golse \cite{Golse2} and those by Neunzert \cite{Neu}, as well as Spohn \cite{Spohn} for a broad introduction to this and other topics in many particle systems. The mathematical literature on the rigorous justification of mean field limits dates back to the seventies, with works such as Neunzert and Wick \cite{NW74}, Braun and Hepp \cite{BraHe}, and Dobrushin \cite{Dob} tackling the case when the interaction force kernel $\nabla W$ is Lipschitz with respect to $x$. This regularity leads to a certain amount of stability for \eqref{VE}. However, from a physical perspective, Lipschitz regularity of the interaction force implies a relatively weak interaction between the particles. In many physically realistic models we would rather expect to see forces with a singularity at zero such as the Coulomb force described above. Forces of this type do not fall within the remit of the aforementioned results. However, in recent years progress was made on this front by Hauray and Jabin \cite{HJ}, who showed that the mean field limit holds for almost all choices of initial conditions for the ODE system \eqref{ODE_W} in the `weakly singular' case, i.e., when the force $\nabla W$ satisfies the following hypotheses: there exists a positive constant $C$ such that, for all  $x\in \mathbb{R}^d\setminus\{0\},$
$$
 \frac{|\nabla W(x)|}{|x|^\alpha}\le C, \frac{|\nabla^2 W(x)|}{|x|^{\alpha+1}}\le C \quad \ \mbox{for all}\ \alpha<1.
$$
Here `almost all' means that if the initial configurations $(x_i(0), v_i(0))_{i=1}^N$ are chosen randomly by drawing $(x_i(0), v_i(0))_{i=1}^\infty$ as independent samples from the probability distribution $f_0$ used as data for \eqref{Eqn_VP}, then the mean field limit holds with probability 1. As the authors of \cite{HJ} note, the weakly singular case corresponds to potentials $W$ that are continuous at the origin. This implies that the interaction is still relatively weak, with large deflections between pairs of particles being uncommon, and so it is reasonable to expect the mean field limit to be simpler in this case than for stronger singularities. For interactions with a stronger singularity the problem remains open for the full interaction. 

In the Lipschitz case, well-posedness for \eqref{VE} is implied by the mean-field limit results, as explained in \cite{Dob}. In more singular cases such as \eqref{Eqn_VP} the method cannot be used since no rigorous derivation result exists. However there are many papers dealing directly with the problem of existence of solutions for \eqref{Eqn_VP}. We refer in particular to the results of Ukai and Okabe \cite{uo}, Pfaffelmoser \cite{Pfa}, Schaeffer \cite{Sch} and Batt and Rein \cite{BR} concerning global-in-time classical solutions. Arsenev \cite{Ar} introduced a notion of weak solution and proved their global existence for initial data satisyfing $f_0 \in L^1_{x,v} \cap L^\infty_{x,v}$; the uniform boundedness constraint was later relaxed to $f_0 \in L^p$ for $p$ sufficiently large by Horst and Hunze \cite{HH}. This is the type of solution we will use for \eqref{Eqn_VP} and the rescaled version \eqref{Eqn_VP_eps} we will introduce later.

Progress has been made in the direction of a mean-field limit for stronger singularities by considering truncated or regularised versions of the interaction force that converge to the original force as $N$ tends to infinity. For instance, a truncation approach was used by Hauray and Jabin in \cite{HJ} to obtain similar results as for the weakly singular case. However this result covers singularities strictly weaker than that of the Coulomb force and thus does not include the Vlasov-Poisson case. More recently, there have been results by Lazarovici \cite{Laz15} and Lazarovici and Pickl \cite{LP15} that are able to cover (a modified version of) the Vlasov-Poisson case. In \cite{Laz15} Lazarovici considers a version of the interaction force regularised by (double) convolution, corresponding physically to the dynamics of a system of `extended charges' - clouds of charge with radius decreasing with $N$. Lazarovici identifies a set of initial configurations for which this modified mean field limit holds and further proves that this set is `large' in the sense described above. The results of \cite{LP15} are also able to cover a modified Vlasov-Poisson equation, using a truncation approach similar to that of \cite{HJ}. The methods used are probabilistic and aimed directly at a typicality type result, showing that the limit holds for `most' initial configurations without giving explicit conditions identifying these successful configurations. In this paper we will work in the setting of \cite{Laz15}, considering a modified Vlasov-Poisson system with force regularised by convolution. 

\subsection*{Quasineutral limit}

A key parameter for a plasma is the \emph{Debye screening length} $\lambda_D$. It is related to the physical parameters of the plasma via the formula
\be \label{Def_Deb}
\lambda_D : = \left (\frac{\epsilon_0 k_B T}{n q^2} \right )^{1/2},
\ee
where $\epsilon_0$ is the permittivity of free space, $k_B$ is the Boltzmann constant, $T$ is the temperature of the electrons, $n$ is their typical density and $q$ is the charge of one electron. The Debye length is important for describing several characteristic behaviours of plasma such as charge screening. It may be thought of intuitively as the scale at which charge separation typically occurs.

In many physical applications, the Debye length is much shorter than the typical scale $L$ at which the plasma is observed. It is therefore interesting to define the parameter $\e := \frac{\lambda_D}{L}$ and consider the limit in which $\e \to 0$. When we take the Debye length into account, in appropriate dimensionless variables the Vlasov-Poisson system becomes
\be \label{Eqn_VP_eps}
(VP)_{\eps} : = \left\{ \begin{array}{ccc}\partial_t f_\eps+v\cdot \nabla_x f_\eps+ E_\eps\cdot \nabla_v f_\eps=0,  \\
E_\eps=-\nabla_x U_\eps, \\
-\eps^2 \Delta_x U_\eps=\rho_{f_{\eps}} - 1 ,\\
f_\eps\vert_{t=0}=f_{0,\eps}\ge0,\ \  \int_{\mathbb{T}^d \times \mathbb{R}^d} f_{0,\eps}\,dx\,dv=1.
\end{array} \right. 
\ee
Formally this is the many particle limit of the $N$-particle ODE system
\be \label{Eqn_VP_N} 
(VP)_{N, \eps} =\left\{
\begin{array}{l}
\dot x_i=v_i,\\
\dot v_i= \frac{1}{N}\sum_{j\neq i}^N \eps^{-2} K(x_i-x_j),
\end{array}
\right.
\ee
where $K$ is defined by $K = \nabla_x G$, where $G$ is the Green kernel of the Laplacian on the $d$-dimensional torus. 

From \eqref{Eqn_VP_eps} we can see that mathematically the quasineutral limit $\e \to 0$ corresponds to a singular limit for the system \eqref{Eqn_VP_eps} in which the Poisson equation degenerates. The formal limit is the \emph{Kinetic Isothermal Euler} system
\be \label{Eqn_KIE2}
(KIE) : =\left\{ \begin{array}{ccc}\partial_t f +v\cdot \nabla_x f+ E\cdot \nabla_v f=0,  \\
E= -\nabla_x U, \\
\rho = 1,\\
f\vert_{t=0}=f_{0}\ge0,\ \  \int_{\mathbb{T}^d \times \mathbb{R}^d} f_0\,dx\,dv=1.
\end{array} \right. 
\ee
Here the force field $E$ may be thought of as a Lagrange multiplier or pressure term corresponding to the constraint that $\r = 1$. This system was also named \emph{Vlasov-Dirac-Benney} by Bardos \cite{Bar} and studied in \cite{BB, BN}.

The mathematical study of the quasineutral limit for the Vlasov-Poisson equation began in the nineties with the pioneering works of Brenier \cite{Br89, MR1360470, Br00} and Grenier, first with a limit involving defect measures \cite{BG94, Gr95}, then with a full justification by Grenier of the quasineutral limit for initial data with uniform analytic regularity \cite{Gr96}. Since then, results have been proven for other classes of initial data. For instance, in the one-dimensional case Han-Kwan and Hauray \cite{HKH15} have proven the quasineutral limit around a certain class of spatially homogeneous equilibria for \eqref{Eqn_KIE2} that satisfy a monotonicity and symmetry condition implying stability. More recently, the main result of Grenier \cite{Gr96} was extended to the case of exponentially small but rough perturbations of uniformly analytic data by Han-Kwan and the second author in \cite{HKI15, HKI14}, using Wasserstein stability estimates. This is the setting we will work in throughout this paper.

The work \cite{Gr96} also included a description of the so-called plasma waves, which are oscillations of the electric field in time, with frequency and amplitude of order $\e^{-1}$. This results in an oscillation of the velocity of the particles of order 1. Since these oscillations do not vanish in the limit $\e \to 0$, in order to obtain the limit $(VP)_\e \to (KIE)$ rigorously it is necessary to compensate for them by introducing corrector functions. Once these are properly defined, it can be shown that in the uniformly analytic case solutions of $(VP)_\e$ converge to a solution of $(KIE)$ in the limit $\e$, up to an oscillatory behaviour entirely characterised by the correctors. We explain this in more detail in section~\ref{Correctors}.

Proving the quasineutral limit is made challenging by instabilities in the dynamics, in particular what are known as \emph{two-stream} instabilities. The canonical example in which these arise is a solution of \eqref{Eqn_VP} consisting of two jets of electrons with sufficiently different velocities. Such solutions are unstable and numerical simulations show that over time the jets begin to twist around each other in phase space - see for instance \cite{BNR}.
As explained by Grenier \cite{Gr95}, this has severe consequences for the quasineutral limit. A rescaling argument shows that the quasineutral limit is strongly linked to large time limits for the Vlasov-Poisson system. Thus if we look at a class of data for which instabilities appear in \eqref{Eqn_VP} over time, we would expect this to cause problems for the quasineutral limit. Indeed it was shown in \cite{HKH15} that there exist choices of initial data of arbitrarily high Sobolev regularity for which the quasineutral limit is false. Thus it is necessary to work within a restricted class of initial data, such as analytic or near to analytic data, so that these instabilities do not obstruct the limit.

In the rest of this paper, we combine a mean field limit for the Vlasov-Poisson system in the sense of Lazarovici \cite{Laz15} with a quasineutral limit in the near-analytic setting of \cite{HKI15, HKI14} to obtain a derivation of the Kinetic Isothermal Euler system \eqref{Eqn_KIE2} from a particle system. We identify a relation between the physical parameters of the system that is sufficient for the limit to be valid.

\section{Statement of Results}

In this section we give a precise statement of our results and introduce the distances and other notions needed to do this. We will be considering the spatially periodic case where the spatial variable lies in the $d$-dimensional torus $\bt^d$, identified with $[- \frac{1}{2}, \frac{1}{2}]^d$ with the appropriate boundary identifications. Throughout, $| \cdot |$ will denote the Euclidean distance on $[- \frac{1}{2}, \frac{1}{2}]^d$ rather than the distance on the torus. We consider the cases $d = 1,2,3$. 

\subsection{Regularised Dynamics}

In the case $d=1$ we prove a result for the true Vlasov-Poisson equation, using the weak-strong estimates found in \cite{Loe} and \cite{HKI14}. In higher dimensions $(d=2,3)$ the singularity in the force kernel is stronger and these weak-strong estimates no longer hold. For these cases we will consider a regularisation of the Vlasov-Poisson equation as in \cite{Laz15}. The force is replaced by a version regularised by double convolution. To do this we construct a regularising sequence by taking $\chi$ to be a standard smooth non-negative radially symmetric mollifier, bounded by a constant $C$, with total mass 1 and supported in the unit ball. Using this we define the scaled mollifier
\begin{equation} \label{Def_chi}
\chi_r(x) = \frac{1}{r^d} \chi \left ( \frac{x}{r} \right ) .
\end{equation}

\begin{defi}
The regularised and scaled force is denoted by
\begin{equation} \label{Def_moll_force}
E_{\eps,r}[f] = \eps^{-2} \, \chi_r *_x \chi_r *_x K *_x \rho_f,
\end{equation}
where $K=\nabla_x G$ is the gradient of the Green kernel of the Laplacian on $\bt^d.$
\end{defi}

The scaled and regularised Vlasov-Poisson equation is then
\begin{equation} \label{Eqn_VP_reg}
(VP)_{\eps, r} = \left\{ \begin{array}{ccc}\partial_t f_{\eps,r} +v\cdot \nabla_x f_{\eps,r}+ E_{\eps,r}[f_{\eps,r}] \cdot \nabla_v f_{\eps,r}=0,  \\
f_{\eps,r}\vert_{t=0}=f_{0,\eps, r}\ge0 .
\end{array} \right. 
\end{equation}
This is the many particle ($N \to \infty$) limit of the following system of ODEs describing the evolution of a vector $[(x_i, v_i)]_{i=1}^N \in (\bt^d \times \br^d)^N$:
\be \label{Eqn_VP_N_reg} 
(VP)_{N, \eps, r} =\left\{
\begin{array}{l}
\dot x_i=v_i,\\
\dot v_i= \frac{1}{N}\sum_{j\neq i}^N \eps^{-2} \chi_r * \chi_r * K (x_i-x_j).
\end{array}
\right.
\ee

This particular choice of regularisation was introduced by Horst \cite{Horst} in the Vlasov-Maxwell context and utilised by Rein in \cite{Rein}. The benefit of this regularisation is that it preserves the property of conservation of energy observed by the original \eqref{Eqn_VP_eps} system. For \eqref{Eqn_VP_eps} the total energy is given by
\begin{align} \label{Def_VP_Energy}
\mathcal{E}(f_\e(t)) & := \frac{1}{2} \int_{\bt^d \times \br^d} f_\e |v|^2 \, dv dx + \frac{\e^2}{2} \int_{\bt^d} |\nabla_x U_\e|^2 \, dx \\
& = \frac{1}{2} \int_{\bt^d \times \br^d} f_\e |v|^2 \, dv dx + \frac{\e^{-2}}{2} \int_{(\bt^d)^2} [\r_{f_\e}(x) - 1] G(x-y) [\r_{f_\e}(y) - 1] \di x \di y .
\end{align}

The key observation is that \eqref{Eqn_VP_reg} conserves the following energy
\begin{align}
\mc{E}_{\e, r}(f_{\e,r}) &= \frac{1}{2}\int_{\bt^d \times \br^d} f_{\e,r} |v|^2 \, dv dx \\
&  + \frac{\e^{-2}}{2} \int_{\bt^{4d}} [\r_{f_{\e,r}}(z) - 1] \chi_r(z - x) G(x-y) \chi_r(w - y)[\r_{f_{\e,r}}(w) - 1] \di x \di y \di z \di w.
\end{align}
This is a regularised version of the energy of the Vlasov-Poisson system. As $r \to 0$ the regularised energy approximates the original Vlasov-Poisson energy. The regularised system can also be thought of as describing the dynamics of a collection of blobs of charge with shape $\chi_r$.

\subsection{Empirical measures}

We mentioned that the system \eqref{Eqn_VP_eps} arises as the formal $N \to \infty$ limit of the ODE system \eqref{Eqn_VP_N}, and that similarly \eqref{Eqn_VP_reg} is the $N \to \infty$ limit of \eqref{Eqn_VP_N_reg}. There is a question of how to formulate this limit mathematically since the solutions of these two systems lie in different spaces a priori. One way to compare the two systems is to look at the empirical measure induced by a solution of \eqref{Eqn_VP_N}. That is, we consider the measure
\be \label{Def_mu_noreg}
\mu^N_{\eps}(t) = \frac{1}{N} \sum_{i = 1}^N \delta_{(x_i(t), v_i(t))} ,
\ee
where $[(x_i(t), v_i(t))]_{i=1}^N$ is a solution of \eqref{Eqn_VP_N}. For the regularised case, we introduce the notation
\be \label{Def_mu}
\mu^N_{\eps, r}(t) = \frac{1}{N} \sum_{i = 1}^N \delta_{(x_i(t), v_i(t))} ,
\ee
where $[(x_i(t), v_i(t))]_{i=1}^N$ is a solution of \eqref{Eqn_VP_N_reg}. 

This can be used to give a genuine mathematical meaning to the idea that “\eqref{Eqn_VP_reg} is the limit of \eqref{Eqn_VP_N_reg}”: it means that if $[(x_i(t), v_i(t))]_{i=1}^N$ are solutions of \eqref{Eqn_VP_N_reg} for each $N$, then the corresponding empirical measures $\mu^N_{\eps, r}$ converge to a solution $f_\e$ of \eqref{Eqn_VP_reg} in the sense of weak convergence of measures. Of course this can only be expected to hold if \eqref{Eqn_VP_N_reg} is initialised with data that approximate the initial data used for \eqref{Eqn_VP_reg}, again in the sense that the empirical measures $\mu^N_{\e,r}(0)$ converge to $f_{0, \e}$ in the sense of weak convergence of measures. A common approach is to choose the initial configurations by taking $[(x_i(0), v_i(0))]_{i=1}^N$ to be $N$ independent samples from $f_{0, \e}$; this corresponds to following $N$ `typical' particles and implies convergence of the empirical measures to $f_{0,\e}$ almost surely by a law of large numbers argument. Our results are not limited to this specific case, however the conditions we end up imposing on the initial configurations are well suited to this approach. We shall examine this further in Section~\ref{sec:conc}.

\subsection{Distances}

We will measure the distance between the various solutions in the Wasserstein sense. We recall the definition below. The Wasserstein distances measure the closeness of random variables, or equivalently probability measures, in terms of possible couplings - that is, ways of realising the two random variables together on the same probability space. Each Wasserstein distance provides a metrisation of the topology of weak convergence of measures, for those measures that have enough moments for the distance to be well-defined.

\begin{defi}
Let $\mu$, $\nu$ be two probability measures on $\br^m$. A coupling $\pi$ of $\mu$ and $\nu$ is a probability measure on $\br^{2m}$ with marginals $\mu$ and $\nu$; that is, for all $A \in \mc{B}(\br^m)$
\be \label{Def_coup}
\begin{array}{cc}
\pi(A \times \br^m) & = \mu(A) \\
\pi(\br^m \times A) & = \nu(A)
\end{array}
\ee
We let $\Pi(\mu, \nu)$ denote the set of couplings of $\mu$ and $\nu$.
\end{defi}

\begin{defi}
The $p$\textsuperscript{th} Wasserstein distance $W_p$ is defined by
\be \label{Def_W}
W_p^p(\mu, \nu) = \inf_{\pi  \in \Pi(\mu, \nu)} \int_{(x,y) \in \br^m \times \br^m} |x-y|^p \, \di \pi(x,y) .
\ee
\end{defi}

In this paper we will mainly work with $W_1$ and $W_2$.

\subsection{Plasma oscillations} \label{Correctors}

To get convergence in the quasineutral limit, we need to correct for `plasma oscillations'. This is an oscillatory behaviour in the velocity variable that does not vanish as $\eps \rightarrow 0$. To deal with this we must introduce a corrector function $\mc{R}: \bt^d \rightarrow \br^d$, which we will define precisely below. We use this corrector to `filter' out the oscillations in the solutions.

\begin{defi} \label{Def_filt}
Let $\mu$ be a probability measure on $\bt^d \times \br^d$. Let $\mc{R} : \bt^d \to \br^d$ be given. The corresponding filtered measure $\tilde{\mu}$ is defined to be the measure such that
\be
\langle \tilde{\mu}, \phi \rangle = \int_{\bt^d \times \br^d} \phi(x, v - \mc{R}(x)) \mu( \di x \di v ) 
\ee
for all test functions $\phi$.
\end{defi}

The correctors we will use depend on the Debye length $\eps$ and are defined as follows. Let $f_{0, \eps}$ be some choice of initial data for the system \eqref{Eqn_VP_reg}, with distributional limit $g_0$ as $\e \to 0$. Let $g$ be a solution of the limiting system \eqref{Eqn_KIE} having $g_0$ as initial datum. We define the overall momentum densities
\begin{align} \label{Def_j}
j^{\eps}(0,x) & : = \int_{\br^d} v f_{0, \eps}(x,v) \di v \\
j(t,x) & : = \int_{\br^d} v g(t,x,v) \di v .
\end{align}

Then let
\be \label{corrector}
\mc{R}_\eps(t,x) : = \frac{1}{i} \left ( d_+ (t,x) e^{\frac{it}{\eps}} - d_{-}(t,x) e^{- \frac{it}{\eps}} \right ) ,
\ee

where $d_{\pm}$ are the solutions of:
\begin{align} \label{Def_d}
\nabla_x \times d_{\pm} &= 0 \\
\nabla_x \cdot \left ( \pt_t d_{\pm} + j \cdot \nabla_x d_{\pm} \right ) & = 0 \\ \label{Def_d_3}
d_{\pm}(0) & = \lim_{\eps \rightarrow 0} \nabla_x \cdot \left ( \frac{\eps E_{\eps}(0) \pm i j^{\eps}(0) }{2} \right ) ,
\end{align}
where $E^\e(0)$ is defined by
\be
E_\e(0) = K * (\r_{f_{0, \e}} - 1) .
\ee

\subsection{Main Results: General Configurations}

We need to impose reasonably strong conditions on $f_{0, \eps}$, the initial distributions for \eqref{Eqn_VP_eps}. To be able to state these assumptions, we first recall that we defined the energy of the Vlasov-Poisson system $\mathcal{E}(f_\e(t))$ in \eqref{Def_VP_Energy}. Secondly, following Grenier \cite{Gr96} we define the following analytic norm: for any given $\delta_0>1$,
\be \label{Def_analyticnorm}
\| g \|_{B_{\delta_0}} := \sum_{k \in \bz} | \widehat{g}(k) | \delta_0^{|k|},
\ee
where $\widehat{g}$ denotes the Fourier series of $g$ with respect to the spatial variable. We can now state the key assumptions on the initial data. We begin with the one-dimensional case:

\begin{hyp}[One dimensional case] \label{Hyp_data_1d} The data $f_{0, \eps}$ satisfy the following:
\begin{enumerate}[(i)]
\item (Analytic + perturbation) Each $f_{0,\e}$ may be written in the form $f_{0, \e} = g_{0, \e} + h_{0, \e}$, where for some $C, \delta_0 > 1$, $g_{0, \e}$ satisfies
$$
\sup_v \lVert g_{0, \e}(\cdot, v) \rVert_{B_{\delta_0}} \leq C 
$$

and $h_{0, \e}$ is small enough in Wasserstein sense:
$$
W_2(f_{0, \e}, g_{0, \e}) \leq \varphi(\e)
$$
for some function $\varphi$ decreasing to 0 sufficiently fast with $\e$. Explicitly, $\varphi$ may be taken to be
$$
\varphi(\e) = C \e^{-1} \exp ( - C \e^{-1}),
$$
for some $C$ sufficiently large.
\item (Control of support) $g_{0, \e}$ has bounded support in velocity, uniformly in $x$ and $\e$; that is, there exists $R > 0$ such that $g_{0, \e} = 0$ for all $|v| > R$ and all $x, \e$.
\item (Near-uniform mass density) The mass density associated to $g_{0, \e}$ is close to 1 in an analytic sense: there exists a constant $C$ such that
$$
\left \lVert \int g_{0, \e}(\cdot,v) \di v - 1 \right \rVert_{B_{\delta_0}} \leq C \e .
$$
\end{enumerate}
\end{hyp}

In higher dimensions we work with compactly supported data in order to get control of the mass density of the solution. However we can allow the size of the support to grow at a polynomial rate in $\e^{-1}$. These assumptions are chosen to match the results obtained in \cite{HKI15}. Although we believe that the hypothesis on the support may be slightly weakened to include densities that decay exponentially fast in velocity, achieving such extension here would go completely beyond the scope of this paper. 

The interested reader is referred to the papers \cite{HKI14, HKI15} for a discussion about possible initial data that satisfy our assumptions.

\begin{hyp}[Higher dimensions $d=2,3$] \label{Hyp_data} The data $f_{0, \eps}$ satisfy the following:
\begin{enumerate}[(i)]
\item (Uniform estimates) There exists $C_0$ independent of $\eps$ such that
\be \label{Est_Energy}
\lVert f_{0, \eps} \rVert_{L^{\infty}} \leq C_0 , \qquad \mc{E}(f_{0, \eps}) \leq C_0 .
\ee
\item (Control of support) For some $\gamma > 0$,
\be \label{Est_Supp}
f_{0, \eps}(x,v) = 0 \qquad \text{for} \quad |v| > \eps^{- \gamma} .
\ee
\item (Analytic + perturbation) $f_{0,\eps}$ may be decomposed into the form
$$
f_{0,\e} = g_{0,\e} + h_{0,\e},
$$
for some functions $g_{0, \eps}$ satisfying
$$
\sup_{\e\in (0,1)}\sup_{v \in \br^d} \, (1+|v|^2) \| g_{0,\e} (\cdot,v)\|_{B_{\delta_0}}  \leq C .
$$
\item (Perturbation in $W_2$) The functions $ h_{0, \eps}$ satisfy
$$
W_2(f_{0,\e},g_{0,\e}) \leq \varphi(\e) ,$$
where $\varphi$ decreases to 0 sufficiently fast with $\e$. Explicitly, it can be chosen to be
\begin{itemize}
\item in two dimensions, $\varphi(\e)= \exp\left[ \exp\left( - \frac{C}{\e^{2(1+ \max(\delta,\gamma))}}\right) \right]$,
for some constant $C>0$, \ $\delta>2$;

\item in three dimensions, $\varphi(\e)= \exp\left[ \exp\left( - \frac{C}{\e^{2+ \max(38,3\gamma))}}\right) \right]$,
for some constant $C>0$.
\end{itemize}
\end{enumerate}
\end{hyp}

For convenience, we will define a constant $\zeta$, which is fixed depending on the $\gamma$ chosen in \eqref{Est_Supp} and the dimension $d$:
\begin{itemize}
\item For $d=2$, we fix any $\delta > 2$ and let
\be \label{Def_z:2d}
\zeta = \max\{ \gamma, \delta \} .
\ee
\item For $d=3$ we let
\be \label{Def_z:3d}
\zeta = \max \left \{ \gamma, \frac{38}{3} \right \} .
\ee
\end{itemize}

We are now able to state our main results. 

\begin{thm}[One-dimensional case] \label{thm_main_1d}
Let $d=1$. For each $\eps > 0$, let $f_{0, \eps}$ be a choice of initial datum for the system \eqref{Eqn_VP_eps} satisfying Assumption \ref{Hyp_data}. Suppose that $g_{0, \eps}$ has a limit $g_0$ in the sense of distributions as $\e \to 0$. Then there exists $C > 0$ such that the following holds:

Let the initial configurations $[(x_i^\e(0), v_i^\e(0))]_{i=1}^N$ for the N-particle system \eqref{Eqn_VP_N} be chosen such that
\be \label{init_conv_1d}
\lim_{\e \to 0} \e^{-1} e^{\e^{-1} C} W_1(\mu^N_\e(0), f_{0,\e}) = 0 ,
\ee
where $\mu^N_\e(0)$ is the empirical measure corresponding to $[(x_i^\e(0), v_i^\e(0))]_{i=1}^N$. Let $[(x_i^\e(t), v_i^\e(t))]_{i=1}^N$ denote the solution of \eqref{Eqn_VP_N} with initial datum $[(x_i^\e(0), v_i^\e(0))]_{i=1}^N$ and let $\mu^N_{\eps}(t)$ denote the corresponding empirical measure. Let $\tilde{\mu}^N_{\eps}(t)$ denote the measures constructed by filtering $\mu^N_{\eps}(t)$ using the corrector $\mc{R}_{\eps}$ defined from $g$ in \eqref{corrector}, according to Definition \eqref{Def_filt}. Then
\be
\lim_{N \to \infty} \sup_{t \in [0,T]} W_1(\tilde{\mu}^N_{\eps}(t) , g(t)) = 0 .
\ee

\end{thm}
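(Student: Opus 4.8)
The plan is to bridge the empirical measure and the Kinetic Isothermal Euler solution through the solution $g_\e$ of $(VP)_\e$ issued from the \emph{analytic part} $g_{0,\e}$ of the data. Since $\sup_v\|g_{0,\e}(\cdot,v)\|_{B_{\delta_0}}\le C$ and $g_{0,\e}\to g_0$ in the sense of distributions, Grenier's analytic theory \cite{Gr96} provides, on a common time interval $[0,T]$ depending only on $C$ and $\delta_0$, the solution $g_\e$ of $(VP)_\e$ with datum $g_{0,\e}$ together with the solution $g$ of $(KIE)$ with datum $g_0$; moreover the near-uniform mass density hypothesis $\|\r_{g_{0,\e}}-1\|_{B_{\delta_0}}\le C\e$ propagates, so that $\|\r_{g_\e}(t)\|_{L^\infty}\le C$ and $\|\r_{g_\e}(t)-1\|_{L^\infty}\le C\e$ uniformly on $[0,T]$. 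Letting $\mc{R}_\e$ be the corrector \eqref{corrector} built from $g$, and writing $\tilde g_\e$ for $g_\e$ filtered by this same $\mc{R}_\e$ (Definition~\ref{Def_filt}), one has for every $t\in[0,T]$
\be
W_1(\tilde\mu^N_\e(t),g(t))\ \le\ W_1(\tilde\mu^N_\e(t),\tilde g_\e(t))\ +\ W_1(\tilde g_\e(t),g(t)),
\ee
and it remains to show that each term on the right vanishes along a suitable coupling of $N$ and $\e$.

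The second term is the quasineutral limit for uniformly analytic data: by Grenier \cite{Gr96} and the corrector analysis of \cite{HKI15}, whose correctors are exactly those of \eqref{corrector}, one gets $\sup_{[0,T]}W_1(\tilde g_\e(t),g(t))\to0$ as $\e\to0$. For the first term I would first note that filtering by a fixed spatially Lipschitz map cannot expand a Wasserstein distance by more than its Lipschitz constant; since the $d_\pm$ solving \eqref{Def_d}--\eqref{Def_d_3} are analytic and $\e$-independent in the limit, $\|\na_x\mc{R}_\e\|_{L^\infty([0,T]\times\bt)}\le L$ uniformly in $\e$, whence $W_1(\tilde\mu^N_\e(t),\tilde g_\e(t))\le(1+L)\,W_1(\mu^N_\e(t),g_\e(t))$. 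Now $\mu^N_\e$ and $g_\e$ are two solutions of the \emph{same} equation $(VP)_\e$, one of which, namely $g_\e$, carries the density bounds above, so I would apply the weak--strong stability estimate of \cite{HKI14} — which rests on Loeper's potential-difference lemma \cite{Loe} — in the form
\be
W_1(\mu^N_\e(t),g_\e(t))\ \le\ e^{C\e^{-1}t}\,W_1(\mu^N_\e(0),g_{0,\e}),\qquad t\in[0,T].
\ee
Since $W_1(\mu^N_\e(0),g_{0,\e})\le W_1(\mu^N_\e(0),f_{0,\e})+W_1(f_{0,\e},g_{0,\e})\le W_1(\mu^N_\e(0),f_{0,\e})+\varphi(\e)$ by $W_1\le W_2$ and the perturbation hypothesis on the data, the first term is bounded by $(1+L)\,e^{C\e^{-1}T}\big[W_1(\mu^N_\e(0),f_{0,\e})+\varphi(\e)\big]$; taking $\e=\e_N\to0$ as $N\to\infty$, the contribution of $W_1(\mu^N_\e(0),f_{0,\e})$ vanishes by hypothesis \eqref{init_conv_1d} provided $C$ there is chosen larger than the amplification rate times $T$, while $e^{C\e^{-1}T}\varphi(\e)$ vanishes because $\varphi(\e)=C\e^{-1}\exp(-C\e^{-1})$ may be taken with its constant large enough to beat $e^{C\e^{-1}T}$. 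Combining the two bounds closes the argument.

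The hard part is the weak--strong estimate with amplification only $e^{C\e^{-1}t}$, rather than the $e^{C\e^{-2}t}$ — or worse, an Osgood-type sublinear loss — that a naive Gronwall argument gives: the force in $(VP)_\e$ carries the singular factor $\e^{-2}$, and no admissible choice of initial configurations could absorb such a loss while staying compatible with the corrector and analyticity constraints imposed by the quasineutral step. The point is that the ion background is exactly neutral, so $\na_x E_\e[g_\e]=\e^{-2}(\r_{g_\e}-1)$ is only $O(\e^{-1})$ by the propagated bound $\|\r_{g_\e}(t)-1\|_{L^\infty}=O(\e)$; combined with a modulated-energy refinement of Loeper's lemma (the $\e^2$ in the modulated potential energy cancelling one power of $\e^{-2}$), this holds the growth rate at $\e^{-1}$ and keeps the empirical measure admissible on the weak side of the estimate — in dimension one the field generated by a finite sum of Dirac masses is bounded, so $\mu^N_\e$ is a genuine distributional solution of $(VP)_\e$ and $\na(U[\mu^N_\e]-U[g_\e])\in L^2$. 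The remaining, more routine, issues are the treatment of particle collisions in the $N$-particle flow \eqref{Eqn_VP_N}, the uniformity of all constants over the fixed analytic-existence interval $[0,T]$, and the minor bookkeeping needed to reconcile the $W_1$ of \eqref{init_conv_1d} with whichever Wasserstein exponent the weak--strong estimate is most natural in, for which the uniform velocity support of the data is used.
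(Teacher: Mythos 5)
Your proposal follows essentially the same route as the paper: the same triangular argument through $g_\e$ (the solution of $(VP)_\e$ with the analytic datum $g_{0,\e}$), the same Lipschitz filtering estimate, Grenier's analytic existence and quasineutral limit, the one-dimensional weak--strong stability bound with amplification $e^{C\e^{-1}t}$, and the splitting of the initial distance so that hypothesis \eqref{init_conv_1d} together with the exponential smallness of $\varphi(\e)$ absorbs the amplification. The only divergence is your account of where the $\e^{-1}$ rate comes from: the paper does not invoke the neutrality bound $\|\rho_{g_\e}-1\|_{L^\infty}=O(\e)$ or a modulated-energy refinement of Loeper's lemma, but proves Theorem~\ref{thm_1d_stab} by rescaling time and velocity so as to reduce to Hauray's $\e=1$ estimate, which needs only the uniform bound on $\|\rho_{g_\e}\|_{L^\infty}$ supplied by Theorem~\ref{thm_VP_exist} (and produces an extra harmless prefactor $\e^{-1}$ that hypothesis \eqref{init_conv_1d} already accommodates).
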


\begin{thm}[Higher dimensions] \label{thm_main}
Let $d = 2$ or $3$. For each $\eps > 0$, let $f_{0, \eps}$ be a choice of initial datum for the system \eqref{Eqn_VP_eps} satisfying Assumption \ref{Hyp_data}. Suppose that $g_{0, \eps}$ has a limit $g_0$ in the sense of distributions. 

Fix $T > 0$ and $\eta > 0$. Then there exists a constant $C_{T}$ and a weak solution $g(t)$ of \eqref{Eqn_KIE} with initial datum $g_0$ such that the following holds:

Recall the exponent $\zeta$ depending on $f_{0,\e}$ and defined in \eqref{Def_z:2d}-\eqref{Def_z:3d} and let $\eps = \eps_N$, $r = r_N$ be chosen such that
\be \label{r_eps}
r < e^{- C_{T} \eps^{-2 - d \zeta}} .
\ee

Let the initial configurations $[(x_i^\e(0), v_i^\e(0))]_{i=1}^N$ for the N-particle system \eqref{Eqn_VP_N_reg} be chosen such that the corresponding empirical measures satisfy, for some $\eta > 0$,
\be \label{Hyp_init_conv:thm}
\limsup_{N \rightarrow \infty} \frac{W_2(\mu^N_{\eps}(0), f_{0, \eps})}{\eps^{- \gamma} r^{1 + d/2 + \eta/2}} < \infty .
\ee

Let $[(x_i^{\e,r}(t), v_i^{\e,r}(t))]_{i=1}^N$ denote the solution of \eqref{Eqn_VP_N_reg} with initial datum $[(x_i^\e(0), v_i^\e(0))]_{i=1}^N$. Let $\mu^N_{\eps , r}(t)$ denote the empirical measure corresponding to $[(x_i^{\e,r}(t), v_i^{\e,r}(t))]_{i=1}^N$. Let $\tilde{\mu}^N_{\eps , r}(t)$ denote the measures constructed by filtering $\mu^N_{\eps, r}(t)$ using the corrector $\mc{R}_{\eps}$ defined from $g$ in \eqref{corrector}, according to Definition \eqref{Def_filt}. Then
\be
\lim_{N \to \infty} \sup_{t \in [0,T]} W_1(\tilde{\mu}^N_{\eps , r}(t) , g(t)) = 0 .
\ee
\end{thm}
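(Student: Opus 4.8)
The plan is to use the solution $f_{\e,r}$ of the regularised Vlasov--Poisson system \eqref{Eqn_VP_reg} (with initial datum $f_{0,\e}$) as an intermediary between the particle cloud and the limiting system, and to split the distance by the triangle inequality. For each fixed $t\in[0,T]$ the filtering map $(x,v)\mapsto(x,v-\mc{R}_\e(t,x))$ is a global bi-Lipschitz change of variables on $\bt^d\times\br^d$, with constant controlled by $\sup_{t\in[0,T]}\|\nabla_x\mc{R}_\e(t,\cdot)\|_{L^\infty}$; the latter is bounded uniformly in $\e$ and $t$ because the functions $d_\pm$ of \eqref{Def_d}--\eqref{Def_d_3} stay uniformly analytic on $[0,T]$ by Grenier's theory, and $|e^{\pm it/\e}|=1$, so the corrector itself carries no $\e^{-1}$ loss. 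Hence
\be
W_1(\tilde{\mu}^N_{\e,r}(t),g(t))\leq C\,W_1(\mu^N_{\e,r}(t),f_{\e,r}(t))+W_1(\tilde{f}_{\e,r}(t),g(t)),
\ee
with $\tilde f_{\e,r}$ the filtered $f_{\e,r}$, and it suffices to show that the first term vanishes as $N\to\infty$ (a mean field estimate, at fixed $\e=\e_N$, $r=r_N$) and that the second vanishes as $\e\to0$ (a quasineutral estimate for the regularised equation).

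For the mean field term I would follow Lazarovici's analysis of the dynamics of extended charges \cite{Laz15}. For fixed $\e,r$ the kernel $\e^{-2}\chi_r*\chi_r*K$ is smooth and bounded, so a coupling/continuation argument comparing the characteristics of \eqref{Eqn_VP_N_reg} and \eqref{Eqn_VP_reg} applies; the point is to quantify the rate and the range of validity. The difference of forces splits into a ``mean'' part, estimated by a Loeper/Dobrushin-type inequality involving a norm of $\rho_{f_{\e,r}}$ that is controlled a priori from conservation of the regularised energy $\mc{E}_{\e,r}$, the bounds \eqref{Est_Energy}--\eqref{Est_Supp}, and the smallness \eqref{r_eps} of $r$, and a ``fluctuation'' part whose control requires the configuration to be ``good'' in Lazarovici's sense: the blobbed empirical force of radius $r$ approximates $E_{\e,r}[f_{\e,r}]$ only when $W_1(\mu^N_{\e,r}(t),f_{\e,r}(t))\lesssim\e^{-\gamma}r^{1+d/2+\eta/2}$, and a continuation argument shows this persists on $[0,T]$ once it holds at $t=0$ --- which is the role of \eqref{Hyp_init_conv:thm}. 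Chaining these, one gets a bound of the form $\sup_{[0,T]}W_1(\mu^N_{\e,r}(t),f_{\e,r}(t))\lesssim e^{C_T\e^{-2-d\zeta}}W_2(\mu^N_{\e,r}(0),f_{0,\e})$, which by \eqref{r_eps} and \eqref{Hyp_init_conv:thm} is $\lesssim\e_N^{-\gamma}r_N^{d/2+\eta/2}\to0$.

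For the quasineutral term I would adapt the analytic-regularity quasineutral limit of Han-Kwan and the second author \cite{HKI15,HKI14} from \eqref{Eqn_VP_eps} to the regularised system \eqref{Eqn_VP_reg}. First, Grenier's WKB/analytic expansion around the analytic part $g_{0,\e}$ of the data produces, on a fixed interval $[0,T]$, an approximate solution of \eqref{Eqn_VP_eps}; this construction simultaneously yields the weak solution $g$ of \eqref{Eqn_KIE} with datum $g_0$ and exhibits the approximate solution, to leading order, as $g$ translated in velocity by the plasma-oscillation corrector $\mc{R}_\e$ of \eqref{corrector}. Next, one runs a Wasserstein / modulated-energy stability estimate between $f_{\e,r}$ and this approximate solution, built on the regularised analogue $\mc{E}_{\e,r}$ of \eqref{Def_VP_Energy}: the Gronwall inequality carries the usual $\e^{-1}$ losses of the quasineutral limit, the velocity-moment propagation controlled by \eqref{Est_Energy}--\eqref{Est_Supp} (the support growth rate $\gamma$ and the constants from the analytic estimates of \cite{HKI15} enter here through the exponent $\zeta$), \emph{and} one extra error term from replacing $K$ by $\chi_r*\chi_r*K$ when testing against the analytic, hence smooth and bounded, approximate density --- a mollification error of size $r$ times a power of $\e^{-1}$, which even after the Gronwall amplification is $\ll1$ thanks to \eqref{r_eps}. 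This gives $\sup_{[0,T]}W_2(f_{\e,r}(t),f^{\mathrm{app}}_{\e,r}(t))\to0$; filtering both sides by $\mc{R}_\e$ and noting that the filtered approximate solution converges to $g$ by construction yields $\sup_{[0,T]}W_1(\tilde f_{\e,r}(t),g(t))\to0$. Together with the mean field step this finishes the proof.

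I expect the main obstacle to be the quasineutral step for the regularised system: one must reprove the analytic stability estimates of \cite{HKI15,HKI14} for the regularised field $E_{\e,r}$ while tracking explicitly how the parameter $r$ enters every error term --- this is what dictates the relation \eqref{r_eps} --- and one must secure for $f_{\e,r}$ the a priori control (on velocity moments, and on $\rho_{f_{\e,r}}$ where the weak--strong and mean field comparisons need it) that is delicate because the regularised force, while smooth, is very large when $r$ is exponentially small. A secondary difficulty is that the mean field and quasineutral estimates each constrain how fast $r$ and $\e$ may go to zero; reconciling the two is exactly the bookkeeping that produces \eqref{r_eps}--\eqref{Hyp_init_conv:thm}, and in particular the a priori control on $\rho_{f_{\e,r}}$ used in the mean field step is itself most naturally obtained from the quasineutral comparison, so the two steps are somewhat intertwined.
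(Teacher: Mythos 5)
Your mean field step is essentially the paper's Proposition~\ref{Prop_MFL}: an anisotropic coupling evolved along the two flows, a Loeper-type estimate (Lemma~\ref{Lem_HKI}) for the mean part of the force, Lazarovici's lemmas for the blobbed empirical density, and a continuation/truncation argument that keeps the configuration ``good'' on $[0,T]$, with \eqref{r_eps} taming the Gronwall factor and \eqref{Hyp_init_conv:thm} providing smallness at $t=0$. The divergence is in the second leg of your triangle inequality, and that is where the gap lies. You compare $\tilde f_{\e,r}$ directly with $g$, which forces you to prove a quasineutral limit \emph{for the regularised system}: you would have to redo Grenier's WKB construction and the analytic Wasserstein stability estimates of \cite{HKI15,HKI14} with $K$ replaced by $\chi_r*\chi_r*K$, tracking $r$ through every error term. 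You correctly flag this as the main obstacle, but you do not carry it out, and it is not a routine adaptation — it is the bulk of the analytic work, and no such result exists to cite. The paper avoids it entirely by inserting one more intermediary: the \emph{unregularised} solution $f_\e$ of \eqref{Eqn_VP_eps} with the same datum $f_{0,\e}$. The comparison $f_{\e,r}\leftrightarrow f_\e$ (Proposition~\ref{Prop_RUR}) is an elementary Gronwall/coupling estimate of the same flavour as the mean field step — the mollification error enters only as $W_2(\chi_r*\chi_r*\rho_{f_{\e,r}},\rho_{f_{\e,r}})\le 2r$, amplified by $e^{C\e^{-1}M^{1/2}|\log r|^{1/2}T}\le r^{-\beta}$ under \eqref{r_eps_2} — and the quasineutral limit is then invoked only for $(VP)_\e$ itself, where it is already known (Theorem~\ref{multiD:thm1}, quoted verbatim from \cite{HKI15}). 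So the chain is $\mu^N_{\e,r}\to f_{\e,r}\to f_\e\to g$, and no analytic estimate ever has to be reproved for the regularised field.

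A secondary correction: the a priori bound $\|\rho_{f_{\e,r}}\|_{L^\infty}\le M_{\e,T}=C_T\e^{-\zeta d}$ is not obtained ``from the quasineutral comparison'' and does not create the circularity you worry about. It comes from the moment/support propagation estimates of Batt--Rein type (Propositions~\ref{prop_dens_2d}--\ref{prop_dens_3d}, taken from \cite{HKI15}), which apply verbatim to the regularised system; the energy conservation of $\mc{E}_{\e,r}$ is what makes the regularisation by double convolution the right one, but the density bound itself is independent of the quasineutral analysis. With that bound in hand, the mean field step, the regularised-vs-unregularised step, and the quoted quasineutral theorem are three decoupled ingredients, and the only bookkeeping is that \eqref{r_eps} makes $\e^{-1}M^{1/2}\lesssim|\log r|^{1/2}$ so all exponential amplifications are at most a small negative power of $r$.
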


\subsection{Main Results: Typicality}

We can also identify regimes in which the limit holds with probability 1 for $N$-particle configurations chosen by taking independent samples from the probability distributions with density $f_{0,\e}$.

\begin{thm}[Typicality in one dimension] \label{thm_typ_1d}
Let $d=1$ and fix $T>0$. Let $\{f_{0, \e}\}$ be a set of initial data satisfying Assumption~\ref{Hyp_data_1d} and such that $f_{0,\e}$ satisfies the same support assumption \ref{Hyp_data_1d}(ii) as $g_{0,\e}$, that is, there exists $R > 0$ such that $f_{0, \e} = 0$ for all $|v| > R$ and all $x, \e$. Assume that $g_{0,\e}$ has a limit $g_0$ in the sense of distributions as $\e \to 0$. There exists a constant $A$ such that if $\e = \e(N)$ is chosen to satisfy
\be
\e \geq \frac{A}{\log{N}},
\ee
then if the initial $N$-particle configurations $[(x_i^\e(0), v_i^\e(0))]_{i=1}^N$ are chosen by taking $N$ independent samples from $f_{0, \e}$, with probability 1 we have
\be
\lim_{N \to \infty} \sup_{t \in [0,T]} W_1(\tilde{\mu}^N_\e(t), g(t)) = 0 ,
\ee
where 
\begin{itemize}
\item $g(t)$ is a weak solution of \eqref{Eqn_KIE} with initial datum $g_0$ ;
\item $[(x_i^\e(t), v_i^\e(t))]_{i=1}^N$ is the solution of \eqref{Eqn_VP_N} with initial datum $[(x_i^\e(0), v_i^\e(0))]_{i=1}^N$ ;
\item $\mu^N_\e(t)$ denotes the empirical measure corresponding to $[(x_i^\e(t), v_i^\e(t))]_{i=1}^N$ ;
\item $\tilde{\mu}^N_\e(t)$ is the measure constructed by filtering $\mu^N_\e(t)$ using the corrector $\mc{R}_\e$ defined in \eqref{corrector}, according to Definition \eqref{Def_filt}, using the given choice of $f_{0,\e}$ and $g_0$.
\end{itemize}
\end{thm}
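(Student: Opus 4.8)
The plan is to deduce Theorem~\ref{thm_typ_1d} from the general-configuration result, Theorem~\ref{thm_main_1d} (whose hypothesis on the data is precisely Assumption~\ref{Hyp_data_1d} in the case $d=1$), by showing that the randomly chosen $N$-particle configurations satisfy its condition \eqref{init_conv_1d} with probability $1$ whenever $\e = \e(N) \geq A/\log N$. Theorem~\ref{thm_main_1d} already supplies the limiting solution $g$ of \eqref{Eqn_KIE}, the corrector $\mc R_\e$ of \eqref{corrector}, and the conclusion $\lim_{N} \sup_{t\in[0,T]} W_1(\tilde\mu^N_\e(t),g(t)) = 0$ once \eqref{init_conv_1d} holds; so the whole task reduces to a quantitative law of large numbers for the empirical measure $\mu^N_\e(0)$, uniform in $\e$, together with a concentration estimate good enough to get an almost-sure statement.

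I would begin by recording that, writing $C_1$ for the constant in \eqref{init_conv_1d}, under Assumption~\ref{Hyp_data_1d} and the extra hypothesis that each $f_{0,\e}$ itself vanishes for $|v| > R$, every $f_{0,\e}$ is a probability density supported in the fixed compact set $\bt \times [-R,R]$, of diameter $D$ independent of $\e$. For $N$ independent samples drawn from $f_{0,\e}$, with empirical measure $\mu^N_\e(0)$, a quantitative Wasserstein law of large numbers in the critical dimension $2d = 2$ (of the type used in \cite{Laz15}) gives
\be
\mathbb{E}\big[ W_1(\mu^N_\e(0), f_{0,\e}) \big] \leq C_R\, N^{-1/2} \log(2 + N),
\ee
with $C_R$ depending only on $D$; in particular the bound is uniform in $\e \in (0,1)$. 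Next, since altering one of the $N$ samples changes $W_1(\mu^N_\e(0), \cdot)$ by at most $D/N$, McDiarmid's bounded-differences inequality gives, again uniformly in $\e$,
\be
\mathbb{P}\Big( W_1(\mu^N_\e(0), f_{0,\e}) \geq C_R N^{-1/2}\log(2+N) + t \Big) \leq \exp\!\big( - 2 N t^2 D^{-2} \big), \qquad t > 0 .
\ee

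Then, working on a probability space that carries, for each $N$, an independent family of $N$ samples from $f_{0,\e(N)}$, I would fix $\delta \in (0,1/4)$ and apply the last inequality with $t = t_N := N^{-1/2+\delta}$: the deviation probability is at most $\exp(-2 D^{-2} N^{2\delta})$, which is summable in $N$. By the Borel--Cantelli lemma, almost surely there is $N_0$ with
\be
W_1(\mu^N_{\e(N)}(0), f_{0,\e(N)}) \leq C'\, N^{-1/2+\delta} \qquad \text{for all } N \geq N_0 .
\ee
Since $\e(N) \geq A/\log N$ forces $\e(N)^{-1} \leq A^{-1}\log N$, hence $e^{C_1 \e(N)^{-1}} \leq N^{C_1/A}$, this yields, almost surely and for $N$ large,
\be
\e(N)^{-1} e^{C_1 \e(N)^{-1}} W_1(\mu^N_{\e(N)}(0), f_{0,\e(N)}) \leq \frac{C'}{A}\,(\log N)\, N^{\, C_1/A - 1/2 + \delta}.
\ee
Choosing $A$ large enough that $C_1/A < 1/2 - \delta$ makes the right-hand side tend to $0$, so \eqref{init_conv_1d} holds with probability $1$, and Theorem~\ref{thm_main_1d} then gives the stated convergence.

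The main difficulty here is not a single hard estimate --- all the substantive analysis sits in Theorem~\ref{thm_main_1d} --- but rather the need to match three quantitative ingredients. First, the Wasserstein law of large numbers has to be taken in the critical phase-space dimension $2d = 2$, so its rate carries a logarithmic factor and only barely beats $N^{-1/2}$. Second, one must upgrade the in-expectation rate to an almost-sure statement via a concentration inequality, which is exactly what makes the compact-support hypothesis on $f_{0,\e}$ itself (not merely on $g_{0,\e}$) essential, both to secure a rate fast enough for Borel--Cantelli and to keep the concentration bounds uniform in $\e$. Third --- and this is the genuine constraint --- the polynomial decay in $N$ of $W_1(\mu^N_\e(0), f_{0,\e})$ must dominate the exponentially growing factor $e^{C_1/\e}$ inherited from the quasineutral stability estimate; the competition between $N^{-1/2}$ and $e^{C_1/\e} = N^{C_1/A}$ is precisely what fixes the admissible scaling $\e \gtrsim 1/\log N$ and the value of $A$.
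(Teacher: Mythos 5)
Your proposal is correct, and its overall architecture is exactly the paper's: reduce Theorem~\ref{thm_typ_1d} to Theorem~\ref{thm_main_1d} by showing that the sampled configurations satisfy \eqref{init_conv_1d} almost surely, via a quantitative Wasserstein concentration estimate, Borel--Cantelli, and the competition between the polynomial decay of $W_1(\mu^N_\e(0),f_{0,\e})$ and the factor $e^{C/\e}\le N^{C/A}$, which produces the same threshold $A>2C$ (up to the $\delta$-loss) as the paper's Proposition~\ref{Prop_typ_1d}. Where you differ is in the probabilistic ingredient: the paper invokes the Fournier--Guillin deviation inequality (Theorem~\ref{thm_FG}) directly in the critical case $p=m/2=1$, $m=2$, after a velocity-rescaling lemma to reduce to support in $[-1,1]^2$, and then absorbs the awkward $\log(2+1/x)$ factor via the $x^\xi$ trick and some $\eta,\xi$ bookkeeping; you instead combine an in-expectation rate $\mathbb{E}\,W_1\lesssim N^{-1/2}\log N$ (Ajtai--Koml\'os--Tusn\'ady/Fournier--Guillin type, valid uniformly in $\e$ thanks to the uniform compact support, which is precisely why the extra hypothesis on $f_{0,\e}$ is needed) with McDiarmid's bounded-differences inequality, exploiting that changing one sample moves $W_1$ by at most $D/N$. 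Your route needs the separate moment bound as an input but yields a cleaner almost-sure rate $N^{-1/2+\delta}$ and avoids the logarithmic-factor gymnastics; the paper's route needs only the single deviation inequality plus its scaling lemma. Both are valid, and the constants and the admissible scaling $\e\gtrsim 1/\log N$ come out the same.
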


\begin{thm}[Typicality in higher dimensions] \label{thm_typ}
Let $d = 2$ or $3$. For each $\eps > 0$, let $f_{0, \eps}$ be a choice of initial datum for the system \eqref{Eqn_VP_eps} satisfying Assumption \ref{Hyp_data}. Suppose that $g_{0, \eps}$ has a limit $g_0$ in the sense of distributions. For fixed $T$, there exists a constants $C_T, A_T > 0$ such that the following holds:

Recall the exponent $\zeta$ depending on $f_{0,\e}$ and defined in \eqref{Def_z:2d}-\eqref{Def_z:3d} and let $\eps = \eps_N$, $r = r_N$ be chosen such that
\begin{align}\label{r_rate}
r &\geq A_T N^{- \frac{1}{d(d+2)} + \alpha} \\ \label{r_eps_main}
r & < e^{- C_{T} \eps^{-2 - d \zeta}} 
\end{align}
for some $\alpha > 0$. Then if the initial $N$-particle configurations $[(x_i^\e(0), v_i^\e(0))]_{i=1}^N$ are chosen by taking $N$ independent samples from $f_{0, \e}$, with probability 1 we have
\be
\lim_{N \to \infty} \sup_{t \in [0,T]} W_1(\tilde{\mu}^N_{\e,r}(t), g(t)) = 0 ,
\ee
where 
\begin{itemize}
\item $g(t)$ is a weak solution of \eqref{Eqn_KIE} with initial datum $g_0$ ;
\item $[(x_i^{\e,r}(t), v_i^{\e,r}(t))]_{i=1}^N$ is the solution of \eqref{Eqn_VP_N_reg} with initial datum $[(x_i^\e(0), v_i^\e(0))]_{i=1}^N$ ;
\item $\mu^N_{\e,r}(t)$ denotes the empirical measure corresponding to $[(x_i^{\e,r}(t), v_i^{\e,r}(t))]_{i=1}^N$ ;
\item $\tilde{\mu}^N_{\e,r}(t)$ is the measure constructed by filtering $\mu^N_{\e,r}(t)$ using the corrector $\mc{R}_\e$ defined in \eqref{corrector}, according to Definition \eqref{Def_filt}, using the given choice of $f_{0,\e}$ and $g_0$.
\end{itemize}
\end{thm}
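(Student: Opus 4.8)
The plan is to obtain Theorem~\ref{thm_typ} from Theorem~\ref{thm_main} by a concentration argument: the only point that needs checking is that, when the initial configuration $[(x_i^\e(0),v_i^\e(0))]_{i=1}^N$ is drawn as $N$ independent samples from $f_{0,\e}$, the deterministic hypothesis \eqref{Hyp_init_conv:thm} on the initial Wasserstein error holds almost surely along the prescribed sequence $\e=\e_N$, $r=r_N$. Accordingly I fix a small parameter $\eta>0$ (to be chosen below in terms of $\alpha$), take $C_T$ to be the constant furnished by Theorem~\ref{thm_main} for this $\eta$ and $T$ — so that \eqref{r_eps_main} coincides with \eqref{r_eps} — let $g$ be the weak solution of \eqref{Eqn_KIE} with datum $g_0$ provided by that theorem, and $\mc R_\e$ the corrector built from $g$ via \eqref{corrector}; all of these objects are deterministic. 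It then suffices to produce a finite constant $K$ such that, with probability one,
\be \label{plan_goal}
W_2\big(\mu^N_{\e}(0),\, f_{0,\e}\big)\ \le\ K\,\e^{-\gamma}\,r^{1+d/2+\eta/2}\qquad\text{for all sufficiently large } N ,
\ee
where $\mu^N_\e(0)$ is the empirical measure of the initial configuration; on this event all the hypotheses of Theorem~\ref{thm_main} are met, and its conclusion is precisely the asserted convergence.

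To establish \eqref{plan_goal} I use a quantitative law of large numbers for empirical measures in Wasserstein distance. The datum $f_{0,\e}$ is a probability density, bounded in $L^\infty$ by Assumption~\ref{Hyp_data}(i) and supported in $\bt^d\times\{|v|\le\e^{-\gamma}\}$ by Assumption~\ref{Hyp_data}(ii); dilating both variables by $\e^{\gamma}$ pushes it forward to a probability measure $\widehat f_{0,\e}$ supported in one fixed cube of $\br^{2d}$, and since $W_2$ is $1$-homogeneous under dilations one has the exact identity $W_2(\mu^N_{\e}(0),f_{0,\e})=\e^{-\gamma}\,W_2(\widehat\mu^N,\widehat f_{0,\e})$, where $\widehat\mu^N$ is the empirical measure of $N$ independent samples of $\widehat f_{0,\e}$. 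Thus the $\e^{-\gamma}$ generated by the rescaling is precisely the one already present in \eqref{Hyp_init_conv:thm}, and the task reduces to controlling $W_2(\widehat\mu^N,\widehat f_{0,\e})$ for measures living in a common bounded set. For such measures on $\br^{2d}$ a Fournier--Guillin type concentration estimate gives
\be \label{plan_conc}
\PP\big( W_2(\widehat\mu^N,\widehat f_{0,\e}) \ge x \big)\ \le\ C\,e^{-c\,N x^{q}},\qquad 0<x\le 1 ,
\ee
with an exponent $q\le 2d$ (in fact $q=d$ here, up to a logarithmic factor when $d=2$) and constants $C,c$ depending only on $d$, hence uniform over the family $\{\widehat f_{0,\e_N}\}$ since all these measures sit inside a single cube.

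It remains to count exponents. Applying \eqref{plan_conc} with $x=K\,r^{1+d/2+\eta/2}$ (which is $\le1$ for $N$ large since $r_N\to0$), the probability that \eqref{plan_goal} fails at stage $N$ is at most $C\exp\!\big(-c\,K^{q}\,N\,r^{q(1+d/2+\eta/2)}\big)$. Since $2d(1+d/2)=d(d+2)$, the lower bound \eqref{r_rate} — $r\ge A_T N^{-1/(d(d+2))+\alpha}$ — yields $N\,r^{q(1+d/2+\eta/2)}\gtrsim N^{1-q/(2d)+\alpha q(1+d/2)-o(1)}$, which, because $q\le 2d$ and $\alpha>0$, is a strictly positive power of $N$ once $\eta$ is small enough (depending on $\alpha$ and $q$). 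Hence $\sum_N\PP(\text{failure at }N)<\infty$, and Borel--Cantelli gives \eqref{plan_goal} for any fixed $K>0$. Finally, the constraints \eqref{r_rate}--\eqref{r_eps_main} are compatible: together they force $\e_N\gtrsim(\log N)^{-1/(2+d\zeta)}$, which is admissible, and $A_T$ is fixed just large enough that the window for $r$ is non-empty and $K r^{1+d/2+\eta/2}\le1$ eventually. This closes the reduction to Theorem~\ref{thm_main}.

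The only genuinely delicate ingredient is the strength of \eqref{plan_conc}: a crude bounded-differences (McDiarmid) estimate for $W_2$ decays like $e^{-c x^{2}}$ with no gain in $N$, which is useless at the relevant threshold $x=K r^{1+d/2+\eta/2}\to0$, so one must invoke the sharp empirical-measure concentration results of Fournier--Guillin type, and in tandem track the $\e^{-\gamma}$ growth of the velocity support through the dilation so that it is absorbed exactly by the normalisation built into \eqref{Hyp_init_conv:thm}. Everything else is a routine transfer of Theorem~\ref{thm_main} across an almost-sure event.
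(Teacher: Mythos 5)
Your proposal is correct and follows essentially the same route as the paper: the paper also reduces Theorem~\ref{thm_typ} to Theorem~\ref{thm_main} by showing (Proposition~\ref{Prop_typ_high}) that \eqref{Hyp_init_conv:thm} holds almost surely, using the velocity rescaling of Lemma~\ref{lem_scale}, the Fournier--Guillin bound of Theorem~\ref{thm_FG}, the same exponent count based on $2d\bigl(1+\tfrac{d}{2}\bigr)=d(d+2)$ under \eqref{r_rate}, and Borel--Cantelli. The only blemish is your parenthetical ``in fact $q=d$'': for $W_2$ (unsquared) the Fournier--Guillin exponent is $q=2d$ ($m/p=d$ for $W_2^2$), but since your computation only uses $q\le 2d$ and the equality case is exactly the binding one, the argument is unaffected.
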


We may note that, in the results above, 
the Debye length $\epsilon$ is assumed to decay logarithmically with respect to $N$.
This is a consequence of the exponential smallness assumption from Assumptions \ref{Hyp_data_1d}
and \ref{Hyp_data}. As observed in \cite{HKI14,HKI15} the exponential smallness is necessary, since the quasineutral limit can be false for polynomially small perturbations \cite{Gr99,HKH15}.

\subsection{Strategy of proof}

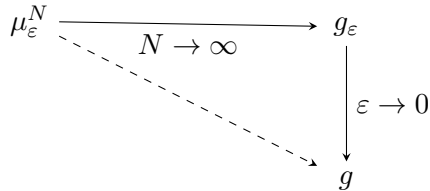
\begin{figure}[ht]
\centering
\begin{tikzpicture}
  \matrix (m) [matrix of math nodes,row sep=4em,column sep=9em,minimum width=2em]
  {
     \mu^N_{\eps} & g_{\e} \\
       & g \\};
  \path[-stealth]
    (m-1-1)             edge node [below] {$N \rightarrow \infty$} (m-1-2)
                edge [dashed] (m-2-2)
    (m-1-2) edge node [right] {$\eps \rightarrow 0$} (m-2-2);
\end{tikzpicture}
\caption{Triangular argument for the proof of Theorem~\ref{thm_main_1d}.}
 \label{fig:strat-1d}
\end{figure}

For the one dimensional case (Theorem~\ref{thm_main_1d}), the overall outline of the proof goes as follows (see Figure~\ref{fig:strat-1d}):
\begin{itemize}
\item We observe that if $\mu^N_\e(0)$ is close to $f_{0, \e}$ in Wasserstein sense, then $\mu^N_\e(0)$ is also close to $g_{0, \e}$, because of our assumption that $f_{0, \e}$ and $g_{0, \e}$ are close in Wasserstein sense. We can then use weak-strong stability estimates for \eqref{Eqn_VP_eps} around $g_\e$, the solution of of \eqref{Eqn_VP_eps} with initial datum $g_{0, \e}$. This gives the limit $\mu^N_\e \to g_\e$ for fixed $\e$. We quantify the dependence of all estimates on $\e$.
\item We use the quasineutral limit for analytic solutions proved in \cite{Gr96} to obtain the convergence $g_\e \to g$, where $g$ is a solution of \eqref{Eqn_KIE}.
\item We use the quantitative estimates to derive a condition on the initial configurations so that the full limit holds.
\end{itemize}

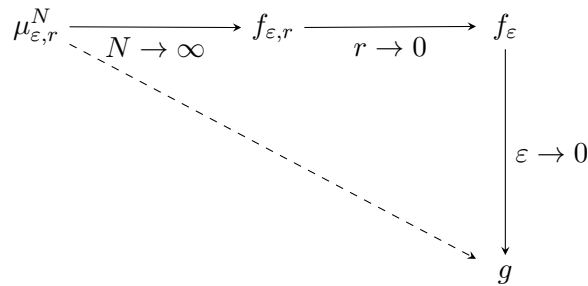
\begin{figure}[ht] 
\centering
\begin{tikzpicture}
  \matrix (m) [matrix of math nodes,row sep=7em,column sep=6em,minimum width=2em]
  {
     \mu^N_{\eps,r} & f_{\eps,r} & f_{\e} \\
      & & g \\};
  \path[-stealth]
    (m-1-1)             edge node [below] {$N \rightarrow \infty$} (m-1-2)
                edge [dashed] (m-2-3)
    (m-1-2)             edge node [below] {$r \rightarrow 0$} (m-1-3)
    (m-1-3) edge node [right] {$\eps \rightarrow 0$} (m-2-3);
\end{tikzpicture}
\caption{Triangular argument for the proof of Theorem~\ref{thm_main}.}
 \label{fig:strat-3d}
\end{figure}

In higher dimensions (Theorem~\ref{thm_main}), we no longer have weak-strong stability estimates at our disposal and so the proof is more involved (see Figure~\ref{fig:strat-3d}):
\begin{itemize}
\item We prove a mean field limit for the regularised $N$-particle system, i.e. the limit \eqref{Eqn_VP_N_reg} $\rightarrow$ \eqref{Eqn_VP_reg}, by adapting the methods of \cite{Laz15}. We quantify explicitly the dependence on $\e$.
\item We prove that the solutions of the regularised system converge to the solutions of the original system, i.e. \eqref{Eqn_VP_reg} $\rightarrow$ \eqref{Eqn_VP_eps}. Again we quantify the dependence of the rate on $\e$.
\item We use the quasineutral limit for the mean-field systems proved in \cite{HKI15} to justify the limit \eqref{Eqn_VP_eps} $\rightarrow$ \eqref{Eqn_KIE}.
\item We use the quantitative estimates to find a relation between $r = r_N$ and $\e = \e_N$ so that the full convergence \eqref{Eqn_VP_N_reg} $\rightarrow$ \eqref{Eqn_KIE} will hold.
\end{itemize}

\paragraph{A note about constants:}
Throughout this paper, we will use the notation $C$ to denote an arbitrary constant that may change from line to line. On occasion we will use subscripts to indicate the parameters on which the constant may depend, for instance $C_T$ denotes a constant depending only on $T$ and no other relevant parameters.

\section{Preliminary estimates}

\subsection{Green kernel}
We recall some properties of the Green's function $G$ for the Laplacian on the torus. The reason for this is that the potential $U_{\e}$ in \eqref{Eqn_VP_eps} may be represented in the form $G * \r_{f_\e}$. Hence when estimating $E_\e = \nabla U_\e$ it is useful to have some information about $G$. We refer to \cite{BR93} and \cite{Titch} for the results quoted below, and we add a short proof for completeness. 

\begin{lem} \label{Lem_G}
The Green kernel $G$ may be written in the form
\be \label{Def_G:3d}
G(x) = \frac{C_0}{|x|} + G_0(x) ,
\ee
when $d=3$, or
\be \label{Def_G:2d}
G(x) = C_0 \log{|x|} + G_0(x) ,
\ee
when $d=2$, for some $G_0 \in C^{\infty}$.
\end{lem}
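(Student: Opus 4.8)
The plan is to exhibit $G$ explicitly via its Fourier series on $\bt^d$ and then subtract off the leading-order singularity, which is precisely the fundamental solution of the Laplacian on $\br^d$. Recall that $G$ is defined (up to an additive constant) by $-\Delta_x G = \delta_0 - 1$ on $\bt^d$, with $\int_{\bt^d} G = 0$; in Fourier variables this reads $\widehat{G}(k) = \frac{1}{4\pi^2 |k|^2}$ for $k \in \bz^d \setminus \{0\}$ and $\widehat{G}(0) = 0$. Let $\Phi$ denote the Newtonian potential on $\br^d$, i.e. $\Phi(x) = \frac{C_0}{|x|}$ for $d = 3$ and $\Phi(x) = C_0 \log |x|$ for $d = 2$, normalised so that $-\Delta \Phi = \delta_0$ in $\mathcal{D}'(\br^d)$. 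The idea is to set $G_0 := G - \Phi$ (with the identification of $\bt^d$ with $[-\tfrac12,\tfrac12]^d$, so that $\Phi$ is a well-defined smooth function away from the origin on this chart) and to show $G_0 \in C^\infty(\bt^d)$.

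First I would verify that $G_0$ is harmonic in a neighbourhood of the origin. Near $x = 0$ both $-\Delta G = \delta_0 - 1$ and $-\Delta \Phi = \delta_0$ hold in the distributional sense, so $-\Delta G_0 = -1$ in a small ball $B_\rho(0)$; in particular $G_0$ differs from the smooth function $-\tfrac{|x|^2}{2d}$ by a distribution that is harmonic on $B_\rho(0)$, hence (by Weyl's lemma / elliptic regularity) $G_0$ is $C^\infty$ on $B_\rho(0)$. Away from the origin, $G$ is smooth (again by elliptic regularity, since $-\Delta G = -1$ there) and $\Phi$ is manifestly smooth on $[-\tfrac12,\tfrac12]^d \setminus \{0\}$, so $G_0$ is smooth there as well. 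Patching these two observations gives $G_0 \in C^\infty(\bt^d)$, which is exactly the claimed decomposition.

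For a fully self-contained argument one can instead proceed by an explicit summation: write $\Phi$ itself as a (conditionally convergent, or Abel-summed) Fourier-type series or use the method of images, and check that $\widehat{G}(k) - \widehat{\Phi}(k) = O(|k|^{-N})$ for every $N$, which forces $G_0$ to be smooth by the standard Fourier characterisation of $C^\infty(\bt^d)$. The main obstacle is purely a bookkeeping one: one must be careful that $\Phi$, as the whole-space fundamental solution, is only defined up to the choice of chart and does not itself live on the torus, so the statement "$G_0 \in C^\infty$" is really a statement about the function $G(x) - \Phi(x)$ on the fundamental domain $[-\tfrac12,\tfrac12]^d$, where one must check smoothness up to and across the identified boundary faces — this follows because $G$ is smooth across those faces by periodicity and $\Phi$ is smooth there since $0$ is an interior point of the chart. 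Since this is a standard fact (references \cite{BR93,Titch} are cited), I would keep the proof to the short harmonic-function argument above rather than grinding through the Fourier estimates.
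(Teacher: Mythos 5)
Your argument is correct and is essentially the paper's own proof: subtract the whole-space fundamental solution $\Phi$ from $G$, note that the difference solves a Poisson equation with smooth (constant) right-hand side near the origin, and conclude $G_0\in C^\infty$ by elliptic regularity, with smoothness away from the origin being immediate. Your additional remarks on Weyl's lemma and on smoothness across the faces of the fundamental domain only make explicit points the paper leaves implicit.
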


Similarly, recalling that we defined $K = \nabla G$, we will write
\be
 K(x) = \nabla G(x) = C_1  \frac{x}{|x|^{d}} + K_0(x) ,
\ee
where $K_0$ is smooth.

\begin{proof}
We prove the case $d=3,$ the case $d=2$ being completely analogous.
Let $G$ be the Green function on the torus, 
that is
$$
\Delta G=\delta_0-1\qquad \text{on }\mathbb T^d.
$$
Then
$$
G(x)=c_d|x|^{d-2}+H(x)
$$
where $H\in C^\infty$.

\bigskip

To prove this, we look at the fundamental domain $[-1/2,1/2]^d$.
Note that $\Delta G=-1$ when $|x|>0$, hence $G$ is smooth away from the origin. So it is enough to prove the regularity of $G$ near $0$, say in $B_{1/4}$.

Consider the function $F(x):=c_d|x|^{d-2}$, and note that
$$
\Delta F=\delta_0.
$$
Set $H:=F-G$. Then
$$
\Delta H=\Delta(F-G)=1\qquad \text{in $(-1/2,1/2)^d$},
$$
so the function $H$ is $C^\infty$ inside $B_{1/4}$.
Since
$G=F+H,$
this proves the result.

\end{proof}

\subsection{Density bounds}

For our proofs we will need to control the mass densities associated with the solutions of \eqref{Eqn_VP_eps} and \eqref{Eqn_VP_reg}. In dimension $d=1$ we only require control of the mass density of $g_\e$, the solution of \eqref{Eqn_VP_eps} with analytic initial data. This control will follow directly from the results of Grenier \cite{Gr96}. In higher dimensions $d=2,3$ we will need to control solutions corresponding to rougher initial data. To be precise, we want to define $M = M_{\e, T}$ depending on $\e$ and $T$ only such that for all $r > 0$ and all $t \in [0,T]$,
\be \label{Def_M} 
\lVert \r_{f_{\eps}} \rVert_{L^{\infty}(\bt^d)} , \lVert \r_{f_{\eps, r}} \rVert_{L^{\infty}(\bt^d)} \leq M .
\ee

To show that it is possible to find such an $M$ and to get an estimate of its dependence on $\e$ we appeal to the results of \cite{HKI15}, which rely on controlling the growth of the support of $f_{\eps,r}$ in the velocity variable and are based on estimates by Batt and Rein \cite{BR}. In two dimensions we have
\begin{prop} \label{prop_dens_2d}
Let $f_{\eps}$ be a solution of \eqref{Eqn_VP_eps}, for initial data satisfying Assumption \ref{Hyp_data}. Fix $T > 0$. Then for all $\delta > 2$, there exists $C_{\delta, T}$ such that for all $t \in [0,T]$,
\be
\lVert \r_{f_{\eps}} \rVert_{L^{\infty}(\bt^d)} \leq C_{\delta, T} \, \eps^{- 2 \max \{ \delta, \gamma \}} .
\ee
\end{prop}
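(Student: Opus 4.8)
The plan is to run the Batt--Rein / Ukai--Okabe argument controlling the growth of the velocity support of $f_\eps$, keeping explicit track of the power of $\eps$ introduced through the singular prefactor $\eps^{-2}$ in the field. Write $\rho_\eps:=\rho_{f_\eps}$. Since $\operatorname{div}_{x,v}\big(v,E_\eps[f_\eps]\big)=0$, the flow of $(VP)_\eps$ is measure preserving, so $\|f_\eps(t)\|_{L^\infty_{x,v}}=\|f_{0,\eps}\|_{L^\infty}\le C_0$ and, since energy is (essentially) conserved, $\mathcal{E}(f_\eps(t))\le \mathcal{E}(f_{0,\eps})\le C_0$ by Assumption~\ref{Hyp_data}(i); in particular $\int_{\bt^2\times\br^2}f_\eps(t)|v|^2\le 2C_0$ for all $t$. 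First I would introduce $Q_\eps(t):=1+\sup\{|v|:(x,v)\in\operatorname{supp}f_\eps(s),\ s\le t\}$. Because $f_\eps$ solves a transport equation whose characteristics have $\dot v=E_\eps[f_\eps]$, one gets $\operatorname{supp}f_\eps(t)\subset\bt^2\times\overline{B_{Q_\eps(t)}}$ together with $Q_\eps(t)\le Q_\eps(0)+\int_0^t\|E_\eps[f_\eps](s)\|_{L^\infty}\,ds$, whence
\be
\|\rho_\eps(t)\|_{L^\infty}\le \|f_\eps(t)\|_{L^\infty_{x,v}}\,|B_{Q_\eps(t)}|\le C\,C_0\,Q_\eps(t)^2 .
\ee
A second, $\eps$- and $t$-uniform, estimate comes from the standard velocity-truncation interpolation: splitting $\int f_\eps\,dv$ at $|v|=R$ and optimising in $R$ gives the pointwise bound $\rho_\eps(t,x)\le C\,C_0^{1/2}\big(\int f_\eps(t,x,v)|v|^2\,dv\big)^{1/2}$, and integrating in $x$ and using the energy bound yields $\|\rho_\eps(t)\|_{L^2(\bt^2)}\le C(C_0)$.

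The engine of the proof is the field estimate. By Lemma~\ref{Lem_G}, $K=K_{\mathrm{sing}}+K_{\mathrm{smooth}}$ with $K_{\mathrm{smooth}}$ bounded and $|K_{\mathrm{sing}}(x)|\lesssim |x|^{-1}$ supported near the origin, so $K_{\mathrm{sing}}\in L^{p'}_{\mathrm{loc}}$ for every $p'<2$. Hence, for any $p\in(2,4)$, Young's inequality gives
\be
\|E_\eps[f_\eps](t)\|_{L^\infty}=\eps^{-2}\|K*\rho_\eps(t)\|_{L^\infty}\le C_p\,\eps^{-2}\big(\|\rho_\eps(t)\|_{L^p}+\|\rho_\eps(t)\|_{L^1}\big),
\ee
and interpolating $\|\rho_\eps\|_{L^p}\le\|\rho_\eps\|_{L^2}^{2/p}\|\rho_\eps\|_{L^\infty}^{1-2/p}$ together with the two a priori bounds above (and $\|\rho_\eps\|_{L^1}=1$) produces $\|E_\eps[f_\eps](t)\|_{L^\infty}\le C_p\,\eps^{-2}\big(1+Q_\eps(t)^{\beta}\big)$ with $\beta:=2(1-2/p)\in(0,1)$. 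Feeding this into the support inequality gives the scalar differential inequality $\dot Q_\eps\le C_p\,\eps^{-2}\big(1+Q_\eps^{\beta}\big)$, which integrates over $[0,T]$ (using $Q_\eps(0)\le 1+\eps^{-\gamma}$ from Assumption~\ref{Hyp_data}(ii)) to $Q_\eps(t)\le C_{p,T}\big(\eps^{-\gamma}+\eps^{-2/(1-\beta)}\big)$. Since $2/(1-\beta)=2p/(4-p)\downarrow 2$ as $p\downarrow 2$, given $\delta>2$ I would fix $p\in(2,4)$ with $2p/(4-p)\le\delta$, obtaining $Q_\eps(t)\le C_{\delta,T}\,\eps^{-\max\{\delta,\gamma\}}$; combined with the density--support bound this gives $\|\rho_\eps(t)\|_{L^\infty}\le C_{\delta,T}\,\eps^{-2\max\{\delta,\gamma\}}$, as claimed.

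The delicate point is not any individual estimate but the rigorous justification of the support-propagation step: the weak solutions of $(VP)_\eps$ are a priori only bounded, so the characteristic flow and the inclusion $\operatorname{supp}f_\eps(t)\subset\bt^2\times\overline{B_{Q_\eps(t)}}$ must be obtained by working with the solution constructed as a limit of regularised problems (for which these statements are legitimate) and passing to the limit, exactly as in \cite{BR, HKI15}. A secondary subtlety worth flagging is that one must resist using the crude bound $\|E_\eps\|_{L^\infty}\lesssim\eps^{-2}\|\rho_\eps\|_{L^\infty}^{1/2}$: that only closes the Gr\"onwall argument with an exponential factor $e^{C_T\eps^{-2}}$, which is useless for the subsequent quasineutral limit. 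The passage to a polynomial rate in $\eps^{-1}$ relies precisely on trading a half-power of $\|\rho_\eps\|_{L^\infty}$ for the $\eps$-uniform $L^2$ bound on $\rho_\eps$ coming from the energy, which is the two-dimensional ingredient that makes the whole scheme work.
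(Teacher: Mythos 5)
Your proof is correct and follows essentially the same route as the paper, which does not prove Proposition~\ref{prop_dens_2d} in-house but quotes it from \cite{HKI15}, whose argument is precisely the Batt--Rein-style control of the velocity support that you reconstruct: the conserved energy gives an $\eps$-uniform $L^2$ bound on $\rho_\eps$, which upgrades the field estimate to $\|E_\eps\|_{L^\infty}\lesssim \eps^{-2}(1+Q_\eps^{\beta})$ with $\beta<1$, and the resulting differential inequality integrates to $Q_\eps\lesssim \eps^{-\max\{\delta,\gamma\}}$ for any $\delta>2$ and hence to the stated density bound. Your closing remarks correctly flag the only delicate points (justifying support propagation for the weak solutions via the regularised approximations, and avoiding the crude $\eps^{-2}\|\rho_\eps\|_{L^\infty}^{1/2}$ bound, which would only give an exponential rate).
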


In three dimensions we have
\begin{prop} \label{prop_dens_3d}
Let $f_{\eps}$ be a solution of \eqref{Eqn_VP_eps}, for initial data satisfying Assumption \ref{Hyp_data}. Fix $T > 0$. Then there exists $C_T$ such that for all $t \in [0,T]$ and all $\eps \in (0,1)$ we have
\be
\lVert \r_{f_{\eps}} \rVert_{L^{\infty}(\bt^d)} \leq C_{T} \, \eps^{- \max \{ 38, 3 \gamma \}} .
\ee
\end{prop}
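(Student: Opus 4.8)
The plan is to reduce the claimed $L^{\infty}$ bound on the spatial density to a bound on the growth of the velocity support of $f_{\eps}$, and then to control that support by the bootstrap of Batt and Rein \cite{BR}, carrying the singular coupling constant $\eps^{-2}$ explicitly; this is the quantitative form of the argument in \cite{HKI15}. First I would reduce to a support estimate. Since \eqref{Eqn_VP_eps} transports $f_{\eps}$ along the flow of the divergence-free field $(v, E_{\eps}[f_{\eps}])$, the solution $f_{\eps}(t)$ is a measure-preserving rearrangement of $f_{0,\eps}$, so in particular $\|f_{\eps}(t)\|_{L^{\infty}} = \|f_{0,\eps}\|_{L^{\infty}} \le C_0$ by Assumption~\ref{Hyp_data}. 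Writing $R_{\eps}(t) := \sup\{|v| : f_{\eps}(s,x,v) \neq 0 \text{ for some } x \text{ and some } s \le t\}$, the slice $f_{\eps}(t,x,\cdot)$ is supported in $\{|v| \le R_{\eps}(t)\}$, whence $\|\rho_{f_{\eps}}(t)\|_{L^{\infty}(\bt^3)} \le \tfrac{4}{3}\pi\, \|f_{\eps}(t)\|_{L^{\infty}} R_{\eps}(t)^3 \le C\, R_{\eps}(t)^3$. By Assumption~\ref{Hyp_data}~(ii) we have $R_{\eps}(0) \le \eps^{-\gamma}$, so it is enough to prove $R_{\eps}(t) \le C_T\, \eps^{-\max\{38/3,\gamma\}}$ for all $t \in [0,T]$ and $\eps \in (0,1)$.

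Next I would record the estimates that feed the bootstrap, keeping careful track of the $\eps$-dependence. The potential part of the energy $\mathcal{E}(f_{\eps})$ in \eqref{Def_VP_Energy} is $\tfrac{\eps^{-2}}{2}\sum_{k\neq 0}|k|^{-2}|\widehat{\rho_{f_{\eps}}}(k)|^2 \ge 0$, so the energy estimate gives $\int_{\bt^3\times\br^3} f_{\eps}(t)|v|^2\,dx\,dv \le 2\,\mathcal{E}(f_{0,\eps}) \le 2C_0$, uniformly in $t$ and $\eps$. Interpolating the velocity integral at fixed $x$ between $\|f_{\eps}\|_{L^{\infty}}$ and this local kinetic energy yields $\|\rho_{f_{\eps}}(t)\|_{L^{5/3}(\bt^3)} \le C$, again uniformly. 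For the field, using $K(x) = C_1 x|x|^{-3} + K_0(x)$ from Lemma~\ref{Lem_G} with $K_0$ smooth, the $K_0$ term contributes $O(\eps^{-2})$ to $E_{\eps} = -\eps^{-2} K*(\rho_{f_{\eps}}-1)$ since $\|\rho_{f_{\eps}}\|_{L^1} = 1$, while splitting the convolution against $|x|^{-2}$ at radius $\delta$, estimating by $\|\rho_{f_{\eps}}\|_{L^{\infty}}$ near the origin and by $\|\rho_{f_{\eps}}\|_{L^{5/3}}$ away from it, and optimizing in $\delta$, gives $\|E_{\eps}(t)\|_{L^{\infty}} \le C\,\eps^{-2}(1+\|\rho_{f_{\eps}}(t)\|_{L^{\infty}}^{4/9}) \le C\,\eps^{-2}(1+R_{\eps}(t)^{4/3})$.

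The last step --- closing the bootstrap for $R_{\eps}$ --- is where the real work lies and is the main obstacle. The crude Gronwall inequality $\dot R_{\eps} \lesssim \eps^{-2} R_{\eps}^{4/3}$ coming from the previous step only controls $R_{\eps}$ on a time interval of length $O(\eps^{2})$, which for small $\eps$ is much shorter than $T$. Instead, following the iteration of \cite{BR} as quantified in \cite{HKI15}, one estimates the time-integrated field $\int_s^{s+\Delta}|E_{\eps}(\tau,X(\tau))|\,d\tau$ along a characteristic over a short interval, exploiting the spreading of neighbouring trajectories to gain an extra power of the spatial variable and thus dampen the singularity of $K$; propagating the factor $\eps^{-2}$ through the resulting recursion produces the bound $R_{\eps}(t) \le C_T\, \eps^{-\max\{38/3,\gamma\}}$ on $[0,T]$, uniformly in $\eps \in (0,1)$ (the $\max$ with $\gamma$ absorbing the size of the initial support). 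Combined with the reduction in the first paragraph this gives $\|\rho_{f_{\eps}}(t)\|_{L^{\infty}} \le C_T\, \eps^{-\max\{38,\,3\gamma\}}$. I expect the delicate point to be exactly this propagation of the constant $\eps^{-2}$ through the Batt--Rein iteration --- it is this that fixes the exponent $38/3$, just as the borderline logarithmic behaviour of $K$ in two dimensions forces the loss of an arbitrarily small power $\delta-2$ in Proposition~\ref{prop_dens_2d} --- together with making the characteristic argument rigorous for the (weak) solutions at hand, e.g. by first working with a suitable regularized approximation.
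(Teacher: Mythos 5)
Your proposal follows essentially the same route as the paper: the paper does not prove this proposition itself but appeals to \cite{HKI15}, whose argument controls the growth of the velocity support via Batt--Rein type estimates \cite{BR}, and your reduction (conservation of $\lVert f_\eps \rVert_{L^\infty}$, the bound $\lVert \r_{f_\eps}\rVert_{L^\infty} \le C R_\eps(t)^3$, the uniform energy and $L^{5/3}$ bounds, and the $\eps^{-2}$ field estimate by splitting the kernel) is the correct scaffolding for that argument with the $\eps$-dependence tracked correctly. Be aware, though, that the step which actually produces the exponent $38/3$ for the support --- running the Batt--Rein iteration along characteristics with the $\eps^{-2}$ coupling propagated through the recursion --- is only described, not executed, in your sketch and is attributed to \cite{HKI15}, which is precisely what the paper itself does.
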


The proofs are given for the \eqref{Eqn_VP_eps} system, but also apply in the regularised case.

In summary, we can take 
\be \label{M_z}
M=M_{\e, T} = C_T \e^{- \zeta d},
\ee
where $\zeta$ is defined in \eqref{Def_z:2d}-\eqref{Def_z:3d}.

\subsection{Basic properties of Wasserstein distances}

We recall some useful properties of Wasserstein distances regarding their behaviour with respect to mollification. The following results follow immediately from Proposition 7.16 of \cite{Vil03}:

The Wasserstein distance may only be decreased by convolution with $\chi_r$:
\begin{lem} \label{W_moll_two}
Let $\mu$, $\nu$ be probability measures, $r>0$ any positive constant and $\chi_r$ a mollifier as defined in \eqref{Def_chi}. Then
\be \label{Est_W_conv}
W_p(\chi_r * \mu, \chi_r * \nu) \leq W_p(\mu, \nu) .
\ee
\end{lem}

The Wasserstein distance between a measure and its mollification can be controlled explicitly:
\begin{lem} \label{W_moll_one}
Let $\mu$ be a probability measure and $r> 0$. Let $\chi_r$ be a mollifier as defined in \eqref{Def_chi}. Then
\be \label{Est_W_self_conv}
W_p(\chi_r * \mu, \mu) \leq r .
\ee
\end{lem}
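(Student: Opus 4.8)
The plan is to exhibit an explicit coupling of $\mu$ and $\chi_r * \mu$ whose transport cost is at most $r^p$; by the definition \eqref{Def_W} of $W_p$ as an infimum over couplings, this immediately yields \eqref{Est_W_self_conv}. The point is that convolution with $\chi_r$ corresponds to perturbing a sample from $\mu$ by an independent random displacement supported in the ball of radius $r$, so the displacement cost is trivially controlled.

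Concretely, I would first recall that $\chi_r$, viewed as a probability density, is supported in the Euclidean ball $B_r(0)$ by construction (see \eqref{Def_chi} and the assumptions on $\chi$). Let $X$ be a random variable with law $\mu$, and let $Z$ be an independent random variable with density $\chi_r$; set $Y := X + Z$. Then the law of $Y$ is exactly $\chi_r * \mu$, so $\pi := \mathrm{Law}(X,Y) \in \Pi(\mu, \chi_r * \mu)$. Since $|X - Y| = |Z| \le r$ almost surely, this coupling satisfies
\be
\int_{\br^m \times \br^m} |x-y|^p \, \di \pi(x,y) = \mathbb E\, |Z|^p \le r^p ,
\ee
and taking the infimum over all couplings in \eqref{Def_W} gives $W_p^p(\chi_r * \mu, \mu) \le r^p$, as claimed.

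The only mild subtlety, rather than a genuine obstacle, is that we are working on $\bt^d$ with $|\cdot|$ the Euclidean distance on the fundamental domain $[-\tfrac12,\tfrac12]^d$: one must check that the translation $x \mapsto x + Z$ moves mass by Euclidean distance at most $r$ even after the torus identification, which holds since $\chi_r$ is supported in $B_r(0)$ with $r$ small. (In the Euclidean setting this is precisely Proposition 7.16 of \cite{Vil03}, and the periodic case follows identically.)
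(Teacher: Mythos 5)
Your proof is correct and is exactly the standard coupling argument behind Proposition 7.16 of \cite{Vil03}, which is precisely what the paper invokes (without writing out a proof) for this lemma. The explicit coupling $Y = X + Z$ with $Z \sim \chi_r$ supported in $B_r$ gives cost at most $r^p$, matching the cited result, so nothing more is needed.
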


\subsection{Behaviour of Wasserstein distances under filtering}

We will need to account for the effect of the filtering on the Wasserstein distance. For this we quote the following lemma from \cite{HKI15}:

\begin{lem} 
\label{Lem_filt} Let $\nu_1, \nu_2$ be probability measures on $\bt^{d} \times \br^{d}$, and let $\tilde{\nu}_i$ denote $\nu_i$ filtered by a given vector field $\mc{R} : \bt^d \to \br^d$ (see Definition \ref{Def_filt}). Then
\be \label{Est_filt}
W_1(\tilde{\nu}_1, \tilde{\nu}_2) \leq (1 + \lVert \nabla_x \mc{R} \rVert_{L^{\infty}}) W_1(\nu_1, \nu_2) .
\ee
\end{lem}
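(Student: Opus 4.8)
The plan is to recognise the filtering operation as the pushforward of a measure under a single globally Lipschitz map of phase space, and then to use the standard fact that a Lipschitz pushforward dilates the Wasserstein distance by at most the Lipschitz constant.

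First I would note that, by Definition~\ref{Def_filt}, the filtered measure $\tilde\nu_i$ is exactly the image $T_\#\nu_i$ of $\nu_i$ under the map
\[
T:\bt^d\times\br^d\to\bt^d\times\br^d,\qquad T(x,v)=(x,\,v-\mc{R}(x)),
\]
since for every test function $\phi$ one has $\int \phi\circ T\,\di\nu_i=\int\phi(x,v-\mc{R}(x))\,\nu_i(\di x\,\di v)=\langle\tilde\nu_i,\phi\rangle$. In particular $\tilde\nu_1$ and $\tilde\nu_2$ are again probability measures on $\bt^d\times\br^d$, so $W_1(\tilde\nu_1,\tilde\nu_2)$ is well defined.

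Second I would establish that $T$ is Lipschitz with constant $1+\lVert\na_x\mc{R}\rVert_{L^\infty}$ for the Euclidean distance used throughout the paper. The $x$-component of $T$ is the identity, while the $v$-component satisfies $|v_1-v_2-(\mc{R}(x_1)-\mc{R}(x_2))|\le |v_1-v_2|+\lVert\na_x\mc{R}\rVert_{L^\infty}\,|x_1-x_2|$, using that $|\mc{R}(x_1)-\mc{R}(x_2)|$ is at most $\lVert\na_x\mc{R}\rVert_{L^\infty}$ times the geodesic distance on $\bt^d$, which is in turn bounded by the Euclidean distance on the fundamental domain. Writing $a=|x_1-x_2|$, $b=|v_1-v_2|$ and $L=\lVert\na_x\mc{R}\rVert_{L^\infty}$, one then gets $|T(x_1,v_1)-T(x_2,v_2)|^2\le a^2+(b+La)^2$, and the elementary inequality $2ab\le a^2+b^2$ gives $a^2+(b+La)^2\le (1+L)^2(a^2+b^2)$, i.e. $|T(x_1,v_1)-T(x_2,v_2)|\le(1+L)\,|(x_1,v_1)-(x_2,v_2)|$.

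Finally, taking an optimal coupling $\pi\in\Pi(\nu_1,\nu_2)$ for $W_1(\nu_1,\nu_2)$, its pushforward $(T\times T)_\#\pi$ belongs to $\Pi(\tilde\nu_1,\tilde\nu_2)$, so
\[
W_1(\tilde\nu_1,\tilde\nu_2)\le\int|T(x_1,v_1)-T(x_2,v_2)|\,\di\pi\le(1+L)\int|(x_1,v_1)-(x_2,v_2)|\,\di\pi=(1+L)\,W_1(\nu_1,\nu_2),
\]
which is the asserted bound. The argument is entirely soft; the only mild point to watch is the passage from the gradient bound on $\mc{R}$ to the global Lipschitz bound on $T$ with the sharp constant $(1+L)$, namely the torus/fundamental-domain distance comparison and the inequality $2ab\le a^2+b^2$. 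If $\lVert\na_x\mc{R}\rVert_{L^\infty}=+\infty$ the estimate is vacuous, so there is no loss in assuming $\mc{R}$ Lipschitz.
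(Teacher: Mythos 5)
Your argument is correct: filtering is the pushforward of $\nu_i$ under $T(x,v)=(x,v-\mc{R}(x))$, your Lipschitz bound $|T(x_1,v_1)-T(x_2,v_2)|\le(1+\lVert\na_x\mc{R}\rVert_{L^\infty})\,|(x_1,v_1)-(x_2,v_2)|$ checks out (the inequality $a^2+(b+La)^2\le(1+L)^2(a^2+b^2)$ follows from $2ab\le a^2+b^2$, and the torus/fundamental-domain comparison is handled correctly), and pushing forward a (near-)optimal coupling gives the stated estimate. Note that the paper itself does not prove this lemma at all: it is quoted verbatim from \cite{HKI15}, so your write-up supplies a self-contained proof where the paper only gives a citation; the pushforward-by-a-Lipschitz-map argument you use is the standard one and is essentially the coupling argument used in that reference, so there is no substantive divergence to report.
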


In this paper we will always choose the corrector $\mc{R}_{\e}$ defined by \eqref{corrector}. In this case
\be
|\nabla_x \mc{R}_{\e}| \leq |\nabla_x d_+| + | \nabla_x d_- | .
\ee
Thus there exists $C_T$ independent of $\eps$ such that for $t \in [0,T]$,
\be
\lVert \nabla_x \mc{R}_{\e} \rVert_{L^{\infty}(\bt^d)} \leq C_T .
\ee

\section{One-dimensional case}

\subsection{Stability}

The core of the argument is a weak-strong stability estimate for the system \eqref{Eqn_VP_eps}. A suitable estimate was proved by Hauray in \cite{Hauray}. This was stated in the case $\e=1$ but is of course true for general $\e>0$. For our purposes we will need to keep track of the $\e$-dependence of the constants in the estimate. For this we will use a scaling argument as used previously in \cite{HKI14}. The results below were stated previously in \cite{HKI14} as a minor modification of the work there, however we will write the argument in full here for completeness.

\begin{thm} \label{thm_1d_stab}
Let $f_\e, \nu_\e$ be solutions of \eqref{Eqn_VP_eps} in dimension $d=1$. Suppose that $\r_{f_\e}(t) \in L^{\infty}$ for all $t \in [0,T]$. Then for all $t \in [0,T]$ we have the estimate
\be \label{est_1d_stab}
W_1(f_\e(t), \nu_\e(t)) \leq \e^{-1 }e^{\e^{-1} \alpha(t)} \, W_1(f_0, \nu_0) ,
\ee
where the exponent $\alpha$ can be taken to be
\be
\alpha(t) = \sqrt{2} t + 8 \int_0^t \lVert \r_{f_\e(s)} \rVert_{L^{\infty}} \di s .
\ee
\end{thm}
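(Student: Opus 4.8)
The plan is to establish the weak-strong stability estimate \eqref{est_1d_stab} by a scaling argument reducing to the case $\e = 1$, combined with the Wasserstein stability result of Hauray \cite{Hauray}. First I would introduce the rescaling: given a solution $f_\e(t,x,v)$ of \eqref{Eqn_VP_eps} in dimension $d=1$, I would set $\bar f(s, y, w) := f_\e(\e s, y, \e w)$ (or an analogous scaling in $x$; the precise choice should be the one that turns $(VP)_\e$ into $(VP)_1$), and similarly $\bar\nu$ from $\nu_\e$. A short computation shows that $\bar f$ and $\bar\nu$ solve the unscaled system \eqref{Eqn_VP} with $\e = 1$, and that the density $\r_{\bar f}(s)$ satisfies $\lVert \r_{\bar f}(s) \rVert_{L^\infty} = \lVert \r_{f_\e}(\e s) \rVert_{L^\infty}$ (up to a harmless constant from the velocity rescaling). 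I would then invoke the $\e = 1$ estimate from \cite{Hauray}, which reads $W_1(\bar f(s), \bar\nu(s)) \leq e^{\beta(s)} W_1(\bar f(0), \bar\nu(0))$ with $\beta(s) = \sqrt 2\, s + 8\int_0^s \lVert \r_{\bar f}(\sigma)\rVert_{L^\infty}\,\di\sigma$.

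Next I would translate this back through the scaling. The key point is tracking how $W_1$ transforms: since the change of variables multiplies the velocity coordinate by $\e$ and rescales time, the map $(x,v)\mapsto(x,\e^{-1}v)$ relating the two pictures is Lipschitz with constant $\e^{-1}$ on the velocity block (and $1$ on the position block), so $W_1(\bar f(s),\bar\nu(s)) \geq \e\, W_1(f_\e(\e s), \nu_\e(\e s))$ while at time zero $W_1(\bar f(0),\bar\nu(0)) \leq \max\{1,\e^{-1}\} W_1(f_0,\nu_0)$; keeping $\e < 1$ in mind gives the prefactor $\e^{-1}$. Setting $t = \e s$ and substituting $\sigma = \e^{-1}\tau$ in the integral converts $\beta(s)$ into $\e^{-1}\alpha(t)$ with $\alpha(t) = \sqrt 2\, t + 8\int_0^t \lVert\r_{f_\e(\tau)}\rVert_{L^\infty}\,\di\tau$, which is exactly the claimed exponent. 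Assembling these pieces yields \eqref{est_1d_stab}.

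The main obstacle I anticipate is bookkeeping the exact powers of $\e$ through the scaling — getting the $\e^{-1}$ prefactor and the $\e^{-1}$ inside the exponential to come out correctly, rather than, say, $\e^{-2}$ or some other power. This is sensitive to which variable one rescales (space, time, velocity, or a combination) and to how the $1/\e^2$ in the Poisson equation of \eqref{Eqn_VP_eps} distributes under that choice; one must pick the scaling so that the Poisson coupling becomes order one and then carefully track both the Lipschitz constants of the coordinate change (affecting the multiplicative factors in front of $W_1$) and the Jacobian of the time substitution (affecting the exponent). Since the excerpt says this is "a minor modification" of \cite{HKI14}, I would expect the underlying stability mechanism — Loeper-type or Hauray-type second-order Gronwall control of $W_1$ via the force difference — to be entirely borrowed, and the only genuinely new content to be this elementary but error-prone rescaling computation, which I would write out in full for completeness.
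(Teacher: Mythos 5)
Your strategy is exactly the paper's: rescale to reduce to the $\e=1$ case, apply Hauray's weak--strong estimate, and transport the $W_1$ bounds and the time integral back through the scaling (the paper does this via a two-sided scaling lemma for $W_1$ quoted from \cite{HKI14}). However, two of the concrete details you wrote down would not produce the stated constants, and since the entire content of the theorem is the precise $\e$-dependence, they matter. First, the change of variables $\bar f(s,y,w)=f_\e(\e s,y,\e w)$ does not turn $(VP)_\e$ into $(VP)_1$: a direct computation gives a transport term $\e^2\, w\cdot\nabla_y \bar f$, and moreover $\bar f$ has total mass $\e^{-d}$ with $\r_{\bar f}(s,\cdot)=\e^{-d}\r_{f_\e}(\e s,\cdot)$, so the ``harmless constant'' from the velocity rescaling is in fact $\e^{-d}$, which in $d=1$ would inject an extra $\e^{-1}$ into the exponent and destroy the claimed bound. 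The correct scaling (the paper's) is $\mc{F}_\e(t,x,v)=\e^{-d} f_\e(\e t,x,v/\e)$: the amplitude factor keeps unit mass and gives $\r_{\mc{F}_\e}(t,\cdot)=\r_{f_\e}(\e t,\cdot)$ exactly, so after the substitution $s\mapsto \e^{-1}s$ the exponent is exactly $\e^{-1}\alpha(t)$ with no spurious factors.

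Second, your Wasserstein bookkeeping as stated yields $\e^{-2}$, not $\e^{-1}$: you pay $\e^{-1}$ at time $t$ (from $W_1(\bar f(s),\bar\nu(s))\geq \e\,W_1(f_\e(\e s),\nu_\e(\e s))$) and another $\e^{-1}$ at time $0$ (from $W_1(\bar f(0),\bar\nu(0))\leq \e^{-1}W_1(f_0,\nu_0)$). The correct accounting uses the two-sided comparison sharply, with the factor appearing at only one endpoint: with the paper's scaling (velocities contracted by $\e$), one has $W_1(\mc{F}_\e,\mc{N}_\e)\leq W_1(f_\e,\nu_\e)\leq \e^{-1}W_1(\mc{F}_\e,\mc{N}_\e)$, and the proof uses the left inequality at $t=0$ (cost $1$, since contracting the velocity variable can only decrease $W_1$) and the right inequality at time $t$ (cost $\e^{-1}$), giving the single prefactor $\e^{-1}$ in \eqref{est_1d_stab}. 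With these two corrections your argument closes and coincides with the paper's proof.
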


The key ingredient is the following weak-strong stability estimate from \cite{Hauray}:

\begin{thm} \label{thm_1d_Hau}
Let $f(t), \nu(t)$ be solutions of \eqref{Eqn_VP_eps} with $\e=1$ in dimension $d=1$. Then for all $t$ we have the estimate
\be \label{est_1d_Hau}
W_1(f(t), \nu(t)) \leq e^{\alpha(t)} \, W_1(f_0, \nu_0) ,
\ee
where the exponent $\alpha$ can be taken to be
\be
\alpha(t) = \sqrt{2} t + 8 \int_0^t \lVert \r_{f(s)} \rVert_{L^{\infty}} \di s .
\ee
\end{thm}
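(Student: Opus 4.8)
The plan is to follow Hauray \cite{Hauray}: the proof rests on a Gr\"onwall inequality for a Lagrangian Wasserstein functional, closed by a genuinely one-dimensional estimate on the difference of the two electric fields.

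\emph{Step 1 (Lagrangian set-up).} Since $\r_f(t)\in L^\infty$ and $\int_{\bt}\r_f(t)\,dx=1$ (so in particular $\|\r_f(t)\|_{L^\infty}\ge 1$), the field $E_f(t,\cdot)=K\ast_x\r_f(t)$ satisfies $\pt_x E_f=\r_f-1$ and is therefore Lipschitz in $x$ with constant at most $\|\r_f(t)\|_{L^\infty}+1\le 2\|\r_f(t)\|_{L^\infty}$; consequently the characteristic flow $\Phi^f_t$ of the $f$-system is well defined and $f_t=(\Phi^f_t)_\# f_0$. The second solution $\nu$ is only a measure solution, whose field $E_{\nu_t}$ is merely bounded (and BV), so I would not use a flow for $\nu$ directly: instead I would either first prove the estimate for smooth $\nu$ and then pass to the limit by approximating $\nu_0$ in $W_1$ and using lower semicontinuity of $W_1$, or invoke the superposition principle to obtain a probability measure $\Lambda$ on path space, concentrated on integral curves of the $\nu$-system, with time-$t$ marginal $\nu_t$ (legitimate since $(v,E_{\nu_t})$ is bounded in $v$ and $\nu_t$ has finite first moment whenever $W_1$ is finite). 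Coupling an optimal plan $\pi_0\in\Pi(f_0,\nu_0)$ with this Lagrangian description produces, for each $t$, a coupling of $f_t$ and $\nu_t$; writing $(X^f_t,V^f_t)$, $(X^\nu_t,V^\nu_t)$ for the pair of trajectories issued from a sampled point and $D(t):=\int\big(|X^f_t-X^\nu_t|+|V^f_t-V^\nu_t|\big)$, one has $W_1(f_t,\nu_t)\le D(t)$ and $D(0)=W_1(f_0,\nu_0)$.

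\emph{Step 2 (differentiating $D$).} From $\dot X^f_t-\dot X^\nu_t=V^f_t-V^\nu_t$ one gets $\tfrac{d}{dt}\int|X^f_t-X^\nu_t|\le\int|V^f_t-V^\nu_t|\le D(t)$, while
\[ \dot V^f_t-\dot V^\nu_t=\big(E_f(t,X^f_t)-E_f(t,X^\nu_t)\big)+\big(E_f(t,X^\nu_t)-E_\nu(t,X^\nu_t)\big). \]
The first bracket is at most $2\|\r_f(t)\|_{L^\infty}\,|X^f_t-X^\nu_t|$ by Step 1; after integrating over the sampled point and using that $X^\nu_t$ has law $\r_{\nu_t}$, the contribution of the second bracket equals $\int_{\bt}|E_f(t,x)-E_\nu(t,x)|\,\r_{\nu_t}(dx)$. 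Everything thus reduces to bounding this last quantity by a multiple of $\|\r_f(t)\|_{L^\infty}\,D(t)$.

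\emph{Step 3 (the one-dimensional field estimate).} On $\bt=[-1/2,1/2]$ the kernel $K=\nabla_x G$ is an odd, bounded, periodic sawtooth with $\pt_x K=\delta_0-1$; hence $\pt_x(E_f-E_\nu)=\r_f-\r_\nu$, and since oddness of $K$ forces $\int_{\bt}E_g\,dx=0$ for every $g$, the function $E_f-E_\nu$ is the \emph{zero-mean} primitive of $\r_f-\r_\nu$: $E_f(x)-E_\nu(x)=\big(F_f(x)-F_\nu(x)\big)-\int_{\bt}(F_f-F_\nu)\,dy$, where $F_g$ denotes the cumulative distribution function of $\r_g$. Since $\big|\int_{\bt}(F_f-F_\nu)\,dy\big|\le\int_{\bt}|F_f-F_\nu|\,dy=W_1(\r_{f_t},\r_{\nu_t})$, it remains to estimate $\int_{\bt}|F_f(x)-F_\nu(x)|\,\r_{\nu_t}(dx)$. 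Let $T_t$ be the monotone rearrangement pushing $\r_{\nu_t}$ onto $\r_{f_t}$; then $F_f\circ T_t=F_\nu$ ($\r_{\nu_t}$-a.e.) and $\int|T_t(x)-x|\,\r_{\nu_t}(dx)=W_1(\r_{f_t},\r_{\nu_t})$, so, writing $F_f(x)-F_\nu(x)=\big(F_f(x)-F_f(T_t x)\big)+\big(F_f(T_t x)-F_\nu(x)\big)$, the second term vanishes and the first is at most $\|\r_f(t)\|_{L^\infty}\,|x-T_t x|$ because $F_f'=\r_f$. Integrating,
\[ \int_{\bt}|E_f(t,x)-E_\nu(t,x)|\,\r_{\nu_t}(dx)\ \le\ \big(\|\r_f(t)\|_{L^\infty}+1\big)\,W_1(\r_{f_t},\r_{\nu_t})\ \le\ \big(\|\r_f(t)\|_{L^\infty}+1\big)\,D(t). \]

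\emph{Step 4 (conclusion and main difficulty).} Steps 2--3 give a differential inequality $\dot D(t)\le\big(c_1\|\r_f(t)\|_{L^\infty}+c_2\big)D(t)$, and Gr\"onwall's lemma together with $D(0)=W_1(f_0,\nu_0)$ yields \eqref{est_1d_Hau}; the precise values $\sqrt2$ and $8$ appearing in $\alpha$ come from Hauray's sharper bookkeeping (a weighted version of $D$ and careful treatment of the $\br^2$-norm conversions and of the background term), which I would quote from \cite{Hauray} rather than re-derive. I expect the main obstacle to be Step 1: because the weak solution $\nu$ carries a field that is only bounded (and BV), not Lipschitz, the coupling/transport argument cannot use a classical flow for $\nu$ and must be routed through the superposition principle or a careful approximation argument; a secondary delicate point is the identity $F_f\circ T_t=F_\nu$, which holds only almost everywhere and requires generalized inverses on the set where $\r_{f_t}$ vanishes (and, with the Euclidean-distance Wasserstein distance used throughout this paper, a little care with the torus identification in the $x$-variable).
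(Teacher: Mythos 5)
The paper does not actually prove this statement: Theorem~\ref{thm_1d_Hau} is imported verbatim from Hauray \cite{Hauray}, and the paper's own contribution in this section is only the rescaling argument (Theorem~\ref{thm_1d_stab} via Lemma~\ref{lem_1d_scaleW}) that extends the $\e=1$ estimate to general $\e$. Your proposal therefore goes further than the paper, and what you sketch is essentially Hauray's own weak--strong argument, correctly reconstructed: the Lipschitz bound $\lVert\pt_x E_f\rVert_{L^\infty}\le\lVert\r_f\rVert_{L^\infty}+1$ from the 1D Poisson equation, the Lagrangian coupling (with the superposition principle or an approximation step to handle the merely bounded field of the measure solution $\nu$ --- this is indeed the genuinely delicate point), and the one-dimensional field estimate $\int_{\bt}|E_f-E_\nu|\,\di\r_{\nu}\le(\lVert\r_f\rVert_{L^\infty}+1)\,W_1(\r_f,\r_\nu)$ obtained through cumulative distribution functions and the monotone rearrangement; each of these steps is sound, including the zero-mean normalisation of $E_f-E_\nu$ coming from the oddness of $K$ and the identity $F_f\circ T_t=F_\nu$, which is unproblematic here because $F_f$ is Lipschitz hence continuous. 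The only respect in which your argument does not deliver the statement as written is the pair of explicit constants $\sqrt2$ and $8$ in $\alpha(t)$: your Gr\"onwall inequality produces unspecified $c_1,c_2$, and you defer the sharp bookkeeping to \cite{Hauray} --- which is acceptable, and indeed no weaker than what the paper itself does, since downstream (Theorem~\ref{thm_1d_stab} and the proof of Theorem~\ref{thm_main_1d}) only the structure $\e^{-1}e^{\e^{-1}(\mathrm{const}\cdot t+\mathrm{const}\cdot\int_0^t\lVert\r_{f}\rVert_{L^\infty}\di s)}$ matters, not the precise values. In short: the paper buys the result by citation; your route exhibits the mechanism behind it, at the cost of either redoing Hauray's norm-and-weight optimisation or, as you propose, quoting it for the constants.
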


We extend this to the case of general $\e$ by scaling. As in \cite{HKI14}, we define
\be
\mc{F}_{\e}(t,x,v) := \e^{-1} f_{\e} (\e t, x, \frac{v}{\e}) . 
\ee

Observe that $\mc{F}_{\e}$ is a solution of the system
\be \label{Eqn_rescale}
\left\{ \begin{array}{ccc}\partial_t \mc{F}_\eps+v\cdot \nabla_x \mc{F}_\eps+ E_\eps\cdot \nabla_v \mc{F}_\eps=0,  \\
E_\eps=-\nabla_x U_\eps, \\
- \Delta_x U_\eps=\varrho_{\e} - 1 ,\\
\varrho_{\e} = \int_{\br^d} \mc{F}_{\e} \, \di v , \,
\int_{\br^d \times \br^d} \mc{F}_\e \di x \di v = 1 .
\end{array} 
 \right.
\ee
This is \eqref{Eqn_VP_eps} with $\e=1$, and therefore we may apply the stability estimate Theorem~\ref{thm_1d_Hau} to it. Our aim is to use this to deduce information concerning the original solution $f_\e$. To do this we need to examine the effect of this scaling on the Wasserstein distance. This is the content of the next lemma, which is proved in \cite{HKI14}.
\begin{lem} \label{lem_1d_scaleW}
Let $f_\e(t)$ be a probability density on $\br^d \times \br^d$ and let $\nu_\e(t)$ be a probability measure on $\br^d \times \br^d$. Let
\be
\mc{F}_{\e}(t,x,v) := \e^{-d} f_{\e} (\e t, x, \frac{v}{\e}) ,
\ee
and let $\mc{N}_\e(t)$ be the measure such that for any $\phi \in C^{\infty}_c(\br^d \times \br^d)$,
\be
\int_{\br^d \times \br^d} \phi(x,v) \, \di \mc{N}_\e (t) = \int_{ \br^d \times \br^d} \phi \left (x, \e v \right) \, \di \nu_{\e} (\e t) .
\ee
Then
\be
W_1(\mc{F}_\e(\e^{-1} t), \mc{N}_\e(\e^{-1} t)) \leq W_1(f_\e(t), \nu_\e(t)) \leq \e^{-1} W_1(\mc{F}_\e(\e^{-1} t), \mc{N}_\e(\e^{-1} t)) .
\ee
\end{lem}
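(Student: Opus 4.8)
The plan is to recognise both rescaled objects as images of the original measures under a single linear map, and then transport couplings back and forth. Let $T_\e : \bt^d \times \br^d \to \bt^d \times \br^d$ be the anisotropic dilation $T_\e(x,v) := (x, \e v)$, with inverse $T_\e^{-1}(x,v) = (x, v/\e)$. First I would verify, by an elementary change of variables in the velocity variable, that $\mc{F}_\e(\e^{-1}t)$ is exactly the push-forward $(T_\e)_\# f_\e(t)$: since $\mc{F}_\e(\e^{-1}t, x, v) = \e^{-d} f_\e(t, x, v/\e)$, the Jacobian factor $\e^{-d}$ is precisely the one produced by the substitution $v \mapsto \e v$, so in particular $\mc{F}_\e(\e^{-1}t)$ remains a probability density. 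Likewise, reading off the defining identity $\int \phi(x,v)\,\di\mc{N}_\e(\e^{-1}t) = \int \phi(x,\e v)\,\di\nu_\e(t)$ shows directly that $\mc{N}_\e(\e^{-1}t) = (T_\e)_\# \nu_\e(t)$. Thus both rescaled measures arise from $f_\e(t)$ and $\nu_\e(t)$ by applying the same map $T_\e$, and the two sides of the claimed inequality compare the $W_1$ distance before and after this dilation.

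Next I would use the fact that push-forward of couplings preserves the marginal structure. Given any $\pi \in \Pi(f_\e(t), \nu_\e(t))$, the measure $(T_\e \times T_\e)_\# \pi$ lies in $\Pi(\mc{F}_\e(\e^{-1}t), \mc{N}_\e(\e^{-1}t))$, and its transport cost is $\int |(x_1-x_2,\, \e(v_1-v_2))|\, \di\pi$. Since $\e \in (0,1)$, the elementary bound $|(a, \e b)| \le |(a,b)|$ gives that this cost is at most $\int |(x_1-x_2, v_1-v_2)|\, \di\pi$; taking the infimum over $\pi$ yields $W_1(\mc{F}_\e(\e^{-1}t), \mc{N}_\e(\e^{-1}t)) \le W_1(f_\e(t), \nu_\e(t))$, which is the left-hand inequality. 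For the reverse bound I would run the same argument with $T_\e^{-1}$: given $\tilde\pi \in \Pi(\mc{F}_\e(\e^{-1}t), \mc{N}_\e(\e^{-1}t))$, the measure $(T_\e^{-1} \times T_\e^{-1})_\# \tilde\pi$ couples $f_\e(t)$ and $\nu_\e(t)$, with cost $\int |(x_1-x_2,\, \e^{-1}(v_1-v_2))|\, \di\tilde\pi$, which by the bound $|(a, \e^{-1}b)| \le \e^{-1}|(a,b)|$ (valid for $\e \le 1$) is at most $\e^{-1} \int |(x_1-x_2, v_1-v_2)|\, \di\tilde\pi$. Optimising over $\tilde\pi$ gives $W_1(f_\e(t), \nu_\e(t)) \le \e^{-1} W_1(\mc{F}_\e(\e^{-1}t), \mc{N}_\e(\e^{-1}t))$, the right-hand inequality.

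The argument is essentially bookkeeping and I do not anticipate a serious obstacle. The one point requiring care is the change-of-variables step identifying $\mc{F}_\e(\e^{-1}t)$ with $(T_\e)_\# f_\e(t)$: one must keep track of the time rescaling $t \mapsto \e t$ so that the measures are compared at corresponding times, and confirm that the $\e^{-d}$ Jacobian in the definition of $\mc{F}_\e$ is consistent with the mass normalisation. Once both identifications are in hand, everything reduces to the two scalar estimates $|(a,\e b)| \le |(a,b)|$ and $|(a, b/\e)| \le \e^{-1}|(a,b)|$ on the phase-space Euclidean norm, valid for $\e \in (0,1)$, applied inside the transport integral.
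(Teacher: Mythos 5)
Your proof is correct: identifying $\mc{F}_\e(\e^{-1}t)$ and $\mc{N}_\e(\e^{-1}t)$ as push-forwards of $f_\e(t)$ and $\nu_\e(t)$ under the dilation $(x,v)\mapsto(x,\e v)$, and then transporting couplings through this map and its inverse with the elementary bounds $|(a,\e b)|\le|(a,b)|$ and $|(a,\e^{-1}b)|\le\e^{-1}|(a,b)|$ for $\e\in(0,1)$, is exactly the standard scaling argument; the paper itself does not reprove the lemma but quotes it from \cite{HKI14}, where the proof proceeds in essentially this way. The only caveat is that the conclusion indeed requires $\e\le 1$, which is implicit throughout the paper's quasineutral regime, so your explicit use of it is consistent with the intended setting.
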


\begin{proof}[Proof of Theorem~\ref{thm_1d_stab}]
We define $\mc{F}_\e$, $\mc{N}_\e$ as in Lemma~\ref{lem_1d_scaleW} and observe that these are solutions of \eqref{Eqn_rescale} satisfying the assumptions of Theorem~\ref{thm_1d_Hau}. Therefore
\be
W_1(\mc{F}_\e(\e^{-1} t), \mc{N}_\e(\e^{-1} t)) \leq \exp{ \left ( \sqrt{2} \e^{-1} t + 8 \int_0^{\e^{-1}t} \lVert \r_{\mc{F}_\e(s)} \rVert_{L^{\infty}} \di s \right )} \, W_1(\mc{F}_\e(0), \mc{N}_\e(0)) .
\ee

A change of variable in the exponent gives
\be
\int_0^{\e^{-1}t} \lVert \r_{\mc{F}_\e(s)} \rVert_{L^{\infty}} \di s = \e^{-1} \int_0^{t} \lVert \r_{\mc{F}_\e(\e^{-1} s)} \rVert_{L^{\infty}} \di s.
\ee
Then, noting that $\r_{\mc{F}_\e(\e^{-1} s)} = \r_{f_\e(s)}$, we deduce that
\be
\int_0^{\e^{-1}t} \lVert \r_{\mc{F}_\e(s)} \rVert_{L^{\infty}} \di s = \e^{-1} \int_0^{t} \lVert \r_{f_\e(s)} \rVert_{L^{\infty}} \di s,
\ee
and thus the full exponent is given by $\e^{-1} \alpha(t)$. We complete the proof by applying Lemma~\ref{lem_1d_scaleW}:
\begin{align}
W_1(f_\e(t), \nu_\e(t)) &\leq \e^{-1} W_1(\mc{F}_\e(\e^{-1} t), \mc{N}_\e(\e^{-1} t))  \\
&\leq \e^{-1} e^{\e^{-1} \alpha(t)} W_1(\mc{F}_\e(0), \mc{N}_\e(0)) \\
& \leq \e^{-1} e^{\e^{-1} \alpha(t)} W_1(f_\e(0), \nu_\e(0)) .
\end{align}
\end{proof}

\subsection{Existence of solutions}

We recall the following result based on Theorem 1.1.2 of \cite{Gr96}, which gives existence of solutions with bounded density under our assumptions.

\begin{thm} \label{thm_VP_exist}
Let $g_{0,\e}$ satisfy the conditions stated in Assumption~\ref{Hyp_data_1d}. Then there exist $T>0$, $\delta_1 > 1$ and for each $\e$ a unique solution $g_\e$ of \eqref{Eqn_VP_eps} with initial datum $g_{0,\e}$, such that $g_\e \in C([0,T] ; B_{\delta_1})$. Moreover there exists $C$ such that
$$
\sup_{t \in [0,T]} \left \lVert \int g_\e(t, \cdot, v) \di v - 1 \right \rVert_{B_{\delta_1}} \leq C \e,
$$
and thus these solutions have mass density bounded uniformly in $\e$ on $[0,T]$.
\end{thm}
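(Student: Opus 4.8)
The plan is to deduce the statement from Grenier's analytic existence theory for the quasineutral-scaled Vlasov--Poisson system (Theorem~1.1.2 of \cite{Gr96}); the bulk of the work will be to check that Assumption~\ref{Hyp_data_1d} furnishes initial data of exactly the type Grenier requires, and to extract from his construction the fact that $T$, $\delta_1$ and the $O(\e)$ density-fluctuation bound are all uniform in $\e$. I would organise the argument around the three ingredients that make this uniformity possible.

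First I would set up the functional framework: $\{B_\delta\}_{1<\delta\le\delta_0}$ is a Banach scale in the Nirenberg--Nishida sense --- for $\delta'<\delta$ one has $B_\delta\hookrightarrow B_{\delta'}$ with $\|\cdot\|_{B_{\delta'}}\le\|\cdot\|_{B_\delta}$, each $B_\delta$ is a Banach algebra (its norm being an $\ell^1$-weighted norm on Fourier modes), and $\|\na_x g\|_{B_{\delta'}}\lesssim(\delta-\delta')^{-1}\|g\|_{B_\delta}$. The Poisson step of \eqref{Eqn_VP_eps} reads in Fourier $\widehat{E_\e}(k)=-i\,\e^{-2}\,k|k|^{-2}\big(\widehat{\r_{g_\e}}(k)-\delta_{k,0}\big)$, so $\|E_\e\|_{B_\delta}\le\e^{-2}\|\r_{g_\e}-1\|_{B_\delta}$ with no loss of radius. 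Next I would represent the Vlasov solution by its characteristic flow, $g_\e(t)=(Z^t_\e)_{\#}g_{0,\e}$ with $\dot X=V$, $\dot V=E_\e(t,X)$; since $g_{0,\e}$ is $x$-analytic uniformly in $v$ and compactly supported in $v$ by Assumption~\ref{Hyp_data_1d}(i)--(ii), the density $\r_{g_\e}(t,\cdot)=\int g_\e(t,\cdot,v)\,\di v$ stays in $B_{\delta(t)}$ for a radius $\delta(t)$ decreasing from $\delta_0$, and I would close a coupled a priori estimate for the pair $(\r_{g_\e}-1,\ E_\e)$.

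The key structural observation is that this pair satisfies, to leading order, a harmonic oscillator of frequency $\e^{-1}$: differentiating $-\e^2\Delta U_\e=\r_{g_\e}-1$ in time and using the continuity equation $\partial_t\r_{g_\e}+\na\cdot j_\e=0$ (with $j_\e=\int v g_\e\,\di v$) shows that $\e^2\partial_t E_\e+j_\e$ is curl- and divergence-free, so that $\e^2\partial_t^2(\r_{g_\e}-1)+(\r_{g_\e}-1)$ equals nonlinear and transport corrections only. The leading-order ``energy'' $\|\r_{g_\e}-1\|^2+\e^2\|E_\e\|^2$ (in the relevant analytic norm) is $O(\e^2)$ at $t=0$: Assumption~\ref{Hyp_data_1d}(iii) gives $\|\r_{g_{0,\e}}-1\|_{B_{\delta_0}}\le C\e$, hence $\|E_\e(0)\|_{B_{\delta_0}}\le\e^{-2}\cdot C\e=C\e^{-1}$. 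This forces $\|\r_{g_\e}(t)-1\|_{B_{\delta(t)}}=O(\e)$ on a time interval that does not shrink as $\e\to0$, hence $\|E_\e\|_{B_{\delta(t)}}=O(\e^{-1})$, so the net velocity increment accumulated along the (fast) characteristics over $[0,T]$ is $O(1)$. Feeding these bounds into the abstract Cauchy--Kovalevskaya theorem in the scale $\{B_\delta\}$ yields, for each $\e$, a unique $g_\e\in C([0,T];B_{\delta_1})$ with $T$ and $\delta_1\in(1,\delta_0)$ independent of $\e$, together with the propagated bound $\sup_{t\in[0,T]}\|\r_{g_\e}(t)-1\|_{B_{\delta_1}}\le C\e$; since $\|\cdot\|_{L^\infty}\le\|\cdot\|_{B_{\delta_1}}$, this gives $\|\r_{g_\e}(t)\|_{L^\infty}\le1+C\e$, uniformly in $\e$, which is the last assertion.

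The hard part will be exactly the $\e^{-2}$ prefactor in the Poisson equation, which by itself would destroy any $\e$-uniform estimate. The resolution (this is the content of \cite{Gr96} that I would invoke) is the plasma-oscillation mechanism just described: the fluctuation $\r_{g_\e}-1$ and the rescaled field $\e E_\e$ solve a nonlinearly perturbed harmonic oscillator, so data with $\r_{g_{0,\e}}-1=O(\e)$ remain $O(\e)$, and this persistence is what simultaneously makes the analytic iteration close uniformly in $\e$ and produces the density bound. The genuinely delicate step being quoted is the verification that the nonlinear and transport corrections neither break this oscillator control over an $\e$-independent time nor drive the analyticity radius to zero before time $T$.
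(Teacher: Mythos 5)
The paper offers no independent proof of this theorem: it is recalled verbatim as a consequence of Theorem~1.1.2 of Grenier's work \cite{Gr96}, which is exactly the route you take, so your proposal matches the paper's approach (the extra material in your sketch is an exposition of Grenier's own mechanism, which you explicitly defer to). One small slip in that exposition, immaterial since you invoke \cite{Gr96} for it anyway: the quantity $\lVert \rho_{g_\varepsilon}-1\rVert^2+\varepsilon^2\lVert E_\varepsilon\rVert^2$ is $O(1)$ rather than $O(\varepsilon^2)$ at $t=0$ (since $\lVert E_\varepsilon(0)\rVert\sim\varepsilon^{-1}$, as you yourself compute); the oscillator energy that is $O(\varepsilon^2)$ and propagates the $O(\varepsilon)$ density fluctuation is rather $\lVert \rho_{g_\varepsilon}-1\rVert^2+\varepsilon^2\lVert \partial_t\rho_{g_\varepsilon}\rVert^2$, with $\partial_t\rho_{g_\varepsilon}=-\nabla\cdot j_\varepsilon=O(1)$ initially.
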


\subsection{Quasineutral limit}

In this section we recall the quasineutral limit in the analytic case, proved by Grenier in \cite{Gr96}, Theorems 1.1.2 and 1.1.3. The theorem as stated in \cite{Gr96} actually results in $H^s$ convergence for a certain representation of the solution. The passage from this result to convergence of the solutions in Wasserstein sense follows the same procedure as detailed in \cite{HKI14}, Corollary 4.2. Altogether this results in the following:

\begin{thm} \label{thm_1d_QN}
Let $g_{0,\e}$ be a sequence of initial data satisfying the conditions given in Assumption~\ref{Hyp_data_1d}. Let $g_\e$ denote the solution of \eqref{Eqn_VP_eps} with initial datum $g_{0, \e}$, which exists on some interval $[0,T]$ uniform in $\e$ by Theorem~\ref{thm_VP_exist}. Let $\tilde{g}_\e$ denote the filtered distribution given by filtering $g_\e$ using the correctors defined in \eqref{corrector}, as described in Definition~\ref{Def_filt}. Assume that $g_{0, \e}$ has a weak limit $g_{0,0}$ in the sense of distributions as $\e \to 0$. Then there exists a solution $g_0$ of \eqref{Eqn_KIE} with initial datum $g_{0,0}$ such that
$$
\lim_{\e \to 0} \sup_{t \in [0,T]} W_1(\tilde{g}_\e(t), g_0(t)) = 0 .
$$

\end{thm}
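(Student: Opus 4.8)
First, note that the statement is a translation of Grenier's analytic quasineutral limit into Wasserstein language, so the plan is to split the argument into two parts: invoke the strong-norm convergence of \cite{Gr96}, and then pass to $W_1$-convergence of the phase-space measures following the scheme of \cite[Corollary 4.2]{HKI14}. Throughout, the solutions $g_\e$ and the common interval $[0,T]$ are supplied by Theorem~\ref{thm_VP_exist}, which in particular yields $\sup_{t\in[0,T]}\|\rho_{g_\e}(t)-1\|_{B_{\delta_1}}\le C\e$; this $\e$-smallness of the density fluctuation is exactly what powers Grenier's expansion, and the analyticity persists on $[0,T]$ uniformly in $\e$.

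Next, I would use the representation of \cite[Theorems 1.1.2--1.1.3]{Gr96}: the solution $g_\e$ comes with its characteristic flow $(X_\e,V_\e)(t,\cdot)$, analytic in $x$ with $\e$-uniform radius, so that $g_\e(t)=(X_\e(t),V_\e(t))_\#\,g_{0,\e}$, and it satisfies a WKB-type expansion in which the $O(1)$ velocity oscillation is precisely the corrector $\mc{R}_\e$ of \eqref{corrector}: in a suitable analytic/Sobolev norm, uniformly on $[0,T]$,
\[
X_\e(t,x,v)=\bar X(t,x,v)+o_\e(1),\qquad V_\e(t,x,v)=\bar V(t,x,v)+\mc{R}_\e\bigl(t,\bar X(t,x,v)\bigr)+o_\e(1),
\]
where $(\bar X,\bar V)$ is the flow of the limiting system \eqref{Eqn_KIE} with datum $g_{0,0}$, furnished by the same construction; in particular $g_0(t):=(\bar X(t),\bar V(t))_\#\,g_{0,0}$ is a weak solution of \eqref{Eqn_KIE}. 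Along the way one records that, because $E_\e$ is the sum of a bounded field and an oscillation of amplitude $O(\e^{-1})$ whose time average over $[0,t]$ is $O(\e)$, the velocity support of $g_\e(t)$, hence of the filtered measure $\tilde g_\e(t)$, remains in a fixed ball $B_{R'}$ for all $\e\in(0,1)$ and $t\in[0,T]$, using Assumption~\ref{Hyp_data_1d}(ii) at $t=0$.

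Finally, I would carry out the translation to $W_1$. Both $\tilde g_\e(t)$ and $g_0(t)$ are probability measures on $\bt^d\times B_{R'}$, and since $g_{0,\e}\rightharpoonup g_{0,0}$ with all these measures supported in a fixed compact set, $W_1(g_{0,\e},g_{0,0})\to 0$. Pushing a near-optimal coupling of $g_{0,\e}$ and $g_{0,0}$ through the two flows and applying the expansion above to the filtered flow $(X_\e,\,V_\e-\mc{R}_\e\circ X_\e)$, which differs from $(\bar X,\bar V)$ by $o_\e(1)$ in $L^\infty$, gives
\[
\sup_{t\in[0,T]}W_1\bigl(\tilde g_\e(t),g_0(t)\bigr)\ \le\ W_1(g_{0,\e},g_{0,0})+\sup_{t\in[0,T]}\Bigl(\|X_\e(t)-\bar X(t)\|_{\infty}+\|(V_\e-\mc{R}_\e\circ X_\e)(t)-\bar V(t)\|_{\infty}\Bigr)\ \xrightarrow[\e\to0]{}\ 0 .
\]
Here Lemma~\ref{Lem_filt}, together with the bound $\|\nabla_x\mc{R}_\e\|_{L^\infty}\le C_T$ already recorded, ensures that filtering does not spoil the estimate; in the one-dimensional setting there is no mollification, so Lemmas~\ref{W_moll_two}--\ref{W_moll_one} are not needed. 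This is exactly the procedure of \cite[Corollary 4.2]{HKI14}, to which I would refer for the details of converting the $H^s$ statement of \cite{Gr96} into the $L^\infty$ bounds on the flows used above.

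The main obstacle is the bookkeeping in the middle step: one must verify that Grenier's expansion identifies the oscillatory part with precisely the corrector $\mc{R}_\e$ defined by \eqref{Def_d}--\eqref{Def_d_3}, so that filtering removes it cleanly, and that the $O(\e^{-1})$ size of $E_\e$ degrades neither the velocity support nor the analytic radius over the $\e$-independent time $[0,T]$. Both facts are contained in \cite{Gr96,HKI14}; everything else is a soft compactness-and-coupling argument.
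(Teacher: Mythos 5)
Your proposal takes essentially the same route as the paper, whose proof of this theorem is purely by citation: Grenier's Theorems 1.1.2--1.1.3 in \cite{Gr96} supply the uniform-in-$\e$ analytic/$H^s$ convergence with the oscillatory part identified by the corrector \eqref{corrector}, and the passage to $W_1$ convergence is exactly the procedure of Corollary 4.2 of \cite{HKI14}. Your sketch of that conversion (phrased via characteristic flows and couplings rather than Grenier's multi-fluid representation, with a harmless heuristic about the oscillatory field keeping the velocity support bounded) is a faithful rendering of the cited argument, so there is no substantive difference.
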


\subsection{Completion of proof of Theorem~\ref{thm_main_1d}}

Once again we use $\tilde{\mu}$ to denote the distribution produced by filtering $\mu$ using the corrector function $\mc{R}_\e$, where we take $\mc{R}_\e$ as defined in \eqref{corrector}. Lemma~\ref{Lem_filt} gives us the following estimate between filtered and unfiltered distributions in Wasserstein sense: for all $t \leq T$, there exists a constant $C$ depending only on $T$ such that
\be \label{Est_filt_gmu}
W_1(\tilde{\mu}_\e^N(t), \tilde{g}_\e(t)) \leq C W_1(\mu_\e^N(t) , g_\e(t)) .
\ee

 \begin{proof}[End of proof of Theorem~\ref{thm_main_1d}]
 Theorem~\ref{thm_VP_exist} implies the existence of a uniform mass bound, that is, there exists $M > 0$ satisfying \eqref{Def_M}. Using the triangle inequality, the filtering estimate \eqref{Est_filt_gmu} and Theorem~\ref{thm_1d_stab}, we obtain 
\begin{align}
\sup_{t \in [0,T]} W_1(\tilde{\mu}^N_{\e} (t), g_0(t)) & \leq \sup_{t \in [0,T]} W_1(\tilde{\mu}^N_{\e} (t), \tilde{g}_\e(t)) +\sup_{t \in [0,T]} W_1(\tilde{g}_\e(t), g_0(t)) \\
&\leq \sup_{t \in [0,T]} CW_1(\mu^N_{\e} (t), g_\e(t)) +\sup_{t \in [0,T]} W_1(\tilde{g}_\e(t), g_0(t)) \\
 & \leq C \sup_{t \in [0,T]} \e^{-1} e^{\e^{-1} CMT} W_1(\mu^N_{\e} (0), g_\e(0)) +\sup_{t \in [0,T]} W_1(\tilde{g}_\e(t), g_0(t)) .
\end{align}
The second term on the right hand side converges to 0 by Theorem~\ref{thm_1d_QN}. For the first term, we estimate
\be
\e^{-1} e^{\e^{-1} CMT} W_1(\mu^N_{\e} (0), g_\e(0)) \leq \e^{-1} e^{\e^{-1} CMT} W_1(\mu^N_{\e} (0), f_{0, \e}) + \e^{-1} e^{\e^{-1} CMT} W_1(f_{0,\e}, g_{0,\e}) .
\ee
The first term converges by assumption on the initial conditions \eqref{init_conv_1d}. The second term converges by the hypotheses on the initial data $f_{0, \e}$ stated in Assumption~\ref{Hyp_data_1d}, using the fact that $W_1(f_{0,\e}, g_{0,\e}) \leq W_2(f_{0,\e}, g_{0,\e})$.
\end{proof}

\section{Mean Field Limit for the Regularised Vlasov-Poisson Equation}

In this section we prove a quantitative estimate of the rate of convergence of the empirical measure of the solution $\mu^N_{\eps,r}$ of the scaled and regularised $N$-particle system \eqref{Eqn_VP_N}, to the solution $f_{\eps,r}$ of the mean-field regularised equation \eqref{Eqn_VP_reg}, using the methods of \cite{Laz15}.

\begin{prop} \label{Prop_MFL}
Fix $T > 0$. For any small $\beta > 0$, there exists $C_{\beta, T}$ such that the following holds: Let $f_{\eps,r}$ be a solution of \eqref{Eqn_VP_reg} and $\mu^N_{\eps,r}$ be defined as in \eqref{Def_mu}. Let $\eps = \eps_N$, $r = r_N$ be chosen such that
\be \label{r_eps_2}
r < e^{- C_{\beta, T} \eps^{-2 - d \zeta}}
\ee
Assume that the initial condition for \eqref{Eqn_VP_N} is `well-placed' in the sense that there exists $\eta > \beta$ such that
\be \label{Hyp_init_conv}
\lim_{N \rightarrow \infty} \frac{W_2(\mu^N_{\eps, r}(0), f_{0, \eps})}{\eps^{- \gamma} r^{1 + d/2 + \eta/2}} = 0 .
\ee
 Then for any $\eta ' \in (\beta, \eta)$, there exists a constant $C = C(\beta, T, \eta, \eta ', \gamma, \zeta)$ such that for all $N$ sufficiently large, for all $t \leq T$ we have
\be \label{Est_Gron}
W^2_2(\mu^N_{\eps,r}(t), f_{\eps, r}(t)) \leq C r^{d+2 + \eta ' - \beta} .
\ee
\end{prop}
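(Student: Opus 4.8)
\textbf{Proof plan for Proposition \ref{Prop_MFL}.}

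The plan is to adapt the Gronwall-type argument of Lazarovici \cite{Laz15} to track explicitly how the constants depend on $\eps$ and $r$. I would introduce the ``exact'' flow generated by the mean-field field $E_{\eps,r}[f_{\eps,r}]$, push the empirical measure $\mu^N_{\eps,r}(0)$ forward along this flow, and call the result $\bar\mu^N_{\eps,r}(t)$; this measure solves the same PDE as $f_{\eps,r}$ but with the rougher initial datum $\mu^N_{\eps,r}(0)$. Since the exact flow is a fixed (measure-independent) Lipschitz flow with Lipschitz constant controlled by $\|\nabla E_{\eps,r}[f_{\eps,r}]\|_{L^\infty}$, standard Dobrushin-type stability gives $W_2(\bar\mu^N_{\eps,r}(t), f_{\eps,r}(t)) \le e^{L_1 t}\,W_2(\mu^N_{\eps,r}(0), f_{0,\eps,r})$, where $L_1$ depends on $\eps^{-2}$, on the density bound $M = C_T\eps^{-\zeta d}$ from \eqref{M_z}, and on the mollification parameter $r$ through $\|\nabla^2 \chi_r * \chi_r\|$. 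So the real work is to bound the remaining distance $W_2(\mu^N_{\eps,r}(t), \bar\mu^N_{\eps,r}(t))$, i.e.\ the gap between the true $N$-particle trajectories (driven by the regularised pair force $\eps^{-2}\chi_r*\chi_r*K$) and the trajectories driven by the smooth exact field.

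For that gap I would set up the quantity $Q_N(t) := W_2^2(\mu^N_{\eps,r}(t), \bar\mu^N_{\eps,r}(t))$ and differentiate in time along the natural coupling (particle $i$ paired with particle $i$, $x$-components paired with $x$-components, $v$-components with $v$-components). The $x$-derivative contributes a term bounded by $Q_N$ itself (from $\dot x_i = v_i$). The $v$-derivative produces the difference between the discrete interaction force at the true configuration and the convolved field $E_{\eps,r}[f_{\eps,r}]$ evaluated along the coupled trajectory. I would split this into: (a) a ``consistency'' or fluctuation error --- the difference between $E_{\eps,r}[\mu^N_{\eps,r}]$ and its empirical self-interaction, plus the difference between $E_{\eps,r}[\bar\mu^N_{\eps,r}]$ and $E_{\eps,r}[f_{\eps,r}]$ --- controlled by $W_2(\mu^N_{\eps,r},\bar\mu^N_{\eps,r})$ times the Lipschitz constant $\eps^{-2}\|\nabla(\chi_r*\chi_r*K)\|_\infty$; and (b) a stochastic/combinatorial term coming from the fact that the empirical measure is not exactly a solution of the mean-field PDE, which one controls using the concentration/law-of-large-numbers estimates on the initial data together with the regularity of $\chi_r*\chi_r*K$. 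The crucial quantitative point is that $\|\nabla(\chi_r*\chi_r*K)\|_{L^\infty} \lesssim r^{-d}$ (since $K$ has a $|x|^{-(d-1)}$ type singularity and two convolutions with $\chi_r$ smooth it out at scale $r$, so one derivative costs $r^{-d}$ after accounting for the kernel singularity), giving a Gronwall constant of size $\eps^{-2} r^{-d}$, whence $Q_N(t) \le e^{C\eps^{-2}r^{-d}t}\big(Q_N(0) + (\text{error}_N)\big)$. Choosing $r < e^{-C_{\beta,T}\eps^{-2-d\zeta}}$ as in \eqref{r_eps_2} --- note $\eps^{-2}r^{-d}\cdot r^{\beta} = \eps^{-2} r^{\beta - d}$ and the exponential in $r^{-1}$ is what is needed to absorb $e^{C\eps^{-2}r^{-d}T}$ against a power $r^{d+2+\eta'}$ --- together with the hypothesis \eqref{Hyp_init_conv} that $W_2(\mu^N_{\eps,r}(0),f_{0,\eps}) = o(\eps^{-\gamma}r^{1+d/2+\eta/2})$, i.e.\ $Q_N(0) = o(\eps^{-2\gamma} r^{2+d+\eta})$, makes the bound $Q_N(t) \le C r^{d+2+\eta'-\beta}$ close for $N$ large, after slightly degrading $\eta$ to $\eta'$ to absorb sub-exponential losses. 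Combining with the Dobrushin bound above (whose contribution $e^{L_1 t}W_2(\mu^N_{\eps,r}(0),f_{0,\eps,r})^2$ is of the same or smaller order under the same choice of $r$) yields \eqref{Est_Gron}.

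The main obstacle I anticipate is bookkeeping the competition between the exponentially large Gronwall factor $e^{C\eps^{-2}r^{-d}T}$ and the polynomially small target $r^{d+2+\eta'-\beta}$: one must verify that the relation \eqref{r_eps_2} between $r$ and $\eps$ is exactly strong enough that $e^{C\eps^{-2}r^{-d}T}\cdot r^{2+d+\eta} \le r^{d+2+\eta'-\beta}$ for the relevant range, and simultaneously that the fluctuation term $\text{error}_N$ (which carries its own negative power of $N$ but positive powers of $\eps^{-2}$ and $r^{-d}$) is negligible for $N$ large. A secondary technical point is justifying the differential inequality rigorously, since $W_2^2$ along the optimal coupling is only Lipschitz in time and one argues via the coupled characteristic ODEs rather than differentiating the Wasserstein distance directly --- here one uses that $\bar\mu^N_{\eps,r}$ is transported by a genuine flow so the ``same-index'' coupling of particles is admissible and its cost dominates $W_2^2$. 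I would also need the a priori density bound \eqref{Def_M}--\eqref{M_z} to hold for the regularised solution $f_{\eps,r}$ uniformly in $r$, which is asserted after Proposition \ref{prop_dens_3d}, in order to control $\|\nabla E_{\eps,r}[f_{\eps,r}]\|_{L^\infty}$ by $\eps^{-2} M$ rather than by a worse $r$-dependent quantity in the ``macroscopic'' part of the argument.
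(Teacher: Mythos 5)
There is a genuine gap, and it is quantitative rather than structural. Your Gronwall constant is built from the kernel-level Lipschitz bound $\|\nabla(\chi_r*\chi_r*K)\|_{L^\infty}\lesssim r^{-d}$, giving a growth factor $e^{C\eps^{-2}r^{-d}T}$, and you then claim that the relation \eqref{r_eps_2} is ``exactly strong enough'' to absorb this factor against the power $r^{d+2+\eta'}$. It is not: absorbing $e^{C\eps^{-2}r^{-d}T}$ into $r^{-(\beta+\eta-\eta')}$ would require $\eps^{-2}r^{-d}T\lesssim |\log r|$, i.e.\ $r$ bounded \emph{below} in terms of $\eps$, whereas \eqref{r_eps_2} forces $r<e^{-C_{\beta,T}\eps^{-2-d\zeta}}$, so that $r^{-d}=e^{d|\log r|}$ is exponentially larger than $|\log r|\gtrsim \eps^{-2-d\zeta}$ and the factor $e^{C\eps^{-2}r^{-d}T}$ dwarfs every negative power of $r$. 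With your constant the estimate cannot close under the stated hypotheses; shrinking $r$ only makes it worse. The paper avoids this precisely by never using the Lipschitz constant of the mollified kernel itself: its key lemma (Lemma \ref{Lem_lip_moll}) bounds the Lipschitz norm of the \emph{field} $\chi_r*K*h$ for a bounded density $h$ by $C|\log r|(1+\|h\|_{L^\infty})$, with $h=\chi_r*\rho_{\mu^N_{\eps,r}}$ controlled via Lemma \ref{Lem_mu_moll} and a bootstrap on the regime $D\le\lambda^2 r^{d+2}$; together with a Loeper-type estimate for the term comparing $E_{\eps,r}[f_{\eps,r}]$ with $E_{\eps,r}[\mu^N_{\eps,r}]$ at the same point, and an anisotropic distance with optimal $\lambda\sim\eps^{-1}M^{1/2}|\log r|^{1/2}$, the Gronwall exponent becomes $C\eps^{-1}M^{1/2}|\log r|^{1/2}T$, so the growth factor equals $r^{-C\eps^{-1}M^{1/2}|\log r|^{-1/2}T}\le r^{-\beta}$ exactly under \eqref{r_eps_2}. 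This logarithmic-in-$r$ Lipschitz estimate, the $L^\infty$ control of the mollified empirical density, and the resulting calibration of the exponent are the missing ingredients; without them your scheme (whether phrased with the intermediate pushforward $\bar\mu^N_{\eps,r}$ or with the paper's evolving coupling) does not reach \eqref{Est_Gron}.

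Two secondary points. First, there is no ``stochastic/combinatorial'' consistency error at positive times: since $\chi_r*\chi_r*K$ is smooth and odd (so the self-interaction term vanishes), the empirical measure $\mu^N_{\eps,r}(t)$ is an exact measure solution of \eqref{Eqn_VP_reg} with its own field $E_{\eps,r}[\mu^N_{\eps,r}]$, and all the error enters through $W_2(\mu^N_{\eps,r}(0),f_{0,\eps})$; your error$_N$ term is not needed. Second, even in your Dobrushin step the bound on $\|\nabla E_{\eps,r}[f_{\eps,r}]\|_{L^\infty}$ should come from the same $|\log r|(1+M)$ estimate applied to the bounded density $\chi_r*\rho_{f_{\eps,r}}$, not from $\|\nabla^2(\chi_r*\chi_r)\|$, otherwise that branch suffers the same fatal $r^{-d}$ loss.
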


\subsection{Outline of strategy and anisotropic distance}

The aim is to prove a Gronwall type estimate on the Wasserstein distance between $f_{\eps,r}$ and $\mu^N_{\eps,r}$. By definition, for any coupling $\pi \in \Pi(f_{\eps,r},\mu^N_{\eps,r})$,
\be \label{Est_W_pi}
W_2^2(f_{\eps,r},\mu^N_{\eps,r}) \leq \int_{(\bt^{d} \times \br^{d})^2} |x-y|^2 + |v-w|^2 \, \di \pi(x,v,y,w).
\ee

We consider in particular couplings that evolve according to the dynamics of the respective Vlasov-Poisson equation. That is, take $\pi_0$ to be any coupling of $f_{0, \eps}$ and $\mu^N_{\eps}(0)$. Then at later times take $\pi_t$ to satisfy
\begin{equation} \label{Eqn_pi}
(\pt_t  + v \cdot \nabla_x + w \cdot \nabla_y) \pi_t + E_{\eps,r}[f_{\eps,r}](x) \cdot \nabla_v + E_{\eps,r}[\mu^N_{\eps,r}](y) \cdot \nabla_w \pi_t = 0 .
\end{equation}
In each of its variables (in phase space), $\pi_t$ evolves according to the dynamics of the corresponding Vlasov-Poisson system. That is, the $(x,v)$ part evolves along the characteristic flow induced by $f_{\e,r}$, while the $(y,w)$ part evolves along the flow induced by $\mu^N_{\e,r}$. Thus $\pi_t$ remains a coupling of $f_{\eps,r}(t)$ and $\mu^N_{\eps,r}(t)$ for all $t$ and so we may use it to construct an upper bound on the Wasserstein distance. 

The change in the first term of \eqref{Est_W_pi} (position) is controlled by the relative velocity, while the change in the second term (velocity) is controlled by the difference in the forces. We will see later that the estimates on the force terms introduce significantly larger constants than those controlling the velocity terms. In order to close the Gronwall estimate it is useful to be able to mitigate this disparity by working with an anisotropic distance of the form
\begin{equation} \label{Def_D}
D(t) = \frac{1}{2} \int_{ \bt^{2d} \times \br^{2d}} \lambda^2 |x-y|^2 + |v-w|^2 \di \pi_t(x,y,v,w) .
\end{equation}

It will suffice to control $D$ since, as long as $\frac{\lambda^2}{2} > 1$, we have
\be \label{ineq_W-D}
W_2^2(f_{\e, r}, \mu^N_{\e,r}) \leq D .
\ee
Conversely, by definition of $W_2$ we also have
\be \label{Est_infD-W}
\inf_{\pi_0} D(0) \leq \frac{1}{2} \lambda^2 W_2^2(f_{0, \e}, \mu^N_{\e} (0)),
\ee

Furthermore, when considering only the $x$ variable we can get a stronger estimate, since
\begin{align} \label{Est_Wx_D}
W^2_2(\r_{f_{\eps,r}}, \r_{\mu^N_{\eps,r}}) &= \inf_{\tilde{\pi} \in \Pi(\r_{f_{\eps,r}}, \r_{\mu^N_{\eps,r}})} \int_{ \bt^{2d}} |x - y|^2 \di \tilde{\pi}(x,y) \\
& \leq \int_{ \bt^{2d}} |x - y|^2 \di \pi_t(x,v,y,w) \\
& \leq 2 \lambda^{-2} D(t) .
\end{align}

Using the Vlasov-Poisson dynamics, we compute, for any $\alpha > 0$,
\begin{align} \label{Est_Dprime_1}
D'(t) & = \int_{ (\bt^{d} \times \br^{d})^2} \lambda^2 (x-y) \cdot (v-w) + (v-w) \cdot ( E_{\eps,r}[f_{\eps,r}](x) - E_{\eps,r}[\mu_{\eps,r}^N](y) ) \di \pi_t \\
& \leq \lambda D(t) + \frac{\alpha}{2} \int_{ (\bt^{d} \times \br^{d})^2} |v-w|^2  \di \pi_t \\
& \qquad \qquad \qquad \qquad +  \frac{1}{2 \alpha} \int_{ (\bt^{d} \times \br^{d})^2} | E_{\eps,r}[f_{\eps,r}](x) - E_{\eps,r}[\mu_{\eps,r}^N](y) |^2 \di \pi_t \\
& \leq (\lambda + \alpha) D(t) + \frac{1}{\alpha} \underbrace{\int_{ (\bt^{d} \times \br^{d})^2} | E_{\eps,r}[f_{\eps,r}](x) - E_{\eps,r}[\mu_{\eps,r}^N](x) |^2 \di \pi_t}_{= : I_1} \\
& \qquad \qquad \qquad \qquad + \frac{1}{\alpha} \underbrace{\int_{ (\bt^{d} \times \br^{d})^2 } | E_{\eps,r}[\mu_{\eps,r}^N](x) - E_{\eps,r}[\mu_{\eps,r}^N](y) |^2 \di \pi_t}_{= : I_2} .
\end{align}

\subsection{Estimation of force terms}

First we estimate $I_1$. Using that the $x$-marginal of $\pi_t$ is $\rho_{f_{\eps,r}}$,
\begin{align} \label{Est_I2}
I_1 &= \int_{ \bt^{d}} | E_{\eps,r}[f_{\e, r}](x) - E_{\eps,r}[\mu^N_{\e , r}](x) |^2 \r_{f_{\e,r}}(x) \di x \\
& \leq \lVert \r_{f_{\e,r}} \rVert_{L^{\infty}( \bt^{d})} \lVert \eps^{-2} K *_x \chi_r *_x \chi_r *_x (\r_{f_{\eps,r}} - \r_{\mu^N_{\eps,r}}) \rVert^2_{L^2( \bt^{d})}.
\end{align}

Next we apply the following Loeper-type estimate; this is Lemma 3.2 of \cite{HKI15}:
\begin{lem} 
\label{Lem_HKI} Let $ h_1, h_2 \in L^{\infty}$ be probability densitites on $ \bt^{d}$. Let $\Psi_i$ satisfy
\be \label{Loep_electric}
\e^2 \Delta \Psi_i = h_i - 1 .
\ee
Then
\begin{equation} \label{Loep}
\lVert \nabla ( \Psi_1 - \Psi_2 ) \rVert_{L^{2}} \leq \eps^{-2} \max_i \lVert h_i \rVert_{L^{\infty}}^{1/2} \, W_2(h_1, h_2) .
\end{equation}
\end{lem}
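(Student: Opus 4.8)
The plan is to reproduce Loeper's argument \cite{Loe} (this estimate is Lemma~3.2 of \cite{HKI15}), adapted to the torus. Set $\Phi := \Psi_1 - \Psi_2$, so that $\e^2 \Delta \Phi = h_1 - h_2$, a mean-zero function on $\bt^d$. Integrating by parts gives
$$
\lVert \nabla \Phi \rVert_{L^2}^2 = - \int_{\bt^d} \Phi \, \Delta \Phi \, \di x = - \e^{-2} \int_{\bt^d} \Phi \, (h_1 - h_2) \, \di x ,
$$
so it suffices to bound $\big| \int_{\bt^d} \Phi (h_1 - h_2) \, \di x \big|$ above by $\max_i \lVert h_i \rVert_{L^\infty}^{1/2} \, W_2(h_1,h_2) \, \lVert \nabla \Phi \rVert_{L^2}$; dividing the resulting inequality through by $\lVert \nabla \Phi \rVert_{L^2}$ then yields \eqref{Loep}.

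First I would fix an optimal transport map $T$ realising $W_2(h_1, h_2)$ — after a standard regularisation allowing us to assume $h_1$ to be smooth and bounded below, so that the Brenier map $T = \nabla \psi$ exists — and introduce the McCann displacement interpolation $\rho_t := (T_t)_\# h_1$, where $T_t(x) := (1-t) x + t\, T(x)$. Since $T_\# h_1 = h_2$, we may write $\int_{\bt^d} \Phi (h_1 - h_2)\,\di x = \int_{\bt^d} \big( \Phi(x) - \Phi(T(x)) \big) h_1(x)\, \di x$, and expressing the bracket as a line integral of $\nabla \Phi$ along $t \mapsto T_t(x)$, the Cauchy--Schwarz inequality in $x$ for each fixed $t$ gives
$$
\Big| \int_{\bt^d} \Phi (h_1 - h_2)\, \di x \Big| \leq \int_0^1 \Big( \int_{\bt^d} |\nabla \Phi(T_t(x))|^2 h_1(x) \, \di x \Big)^{1/2} \di t \;\cdot\; \Big( \int_{\bt^d} |T(x) - x|^2 h_1(x) \, \di x \Big)^{1/2} ,
$$
where the last factor equals $W_2(h_1, h_2)$.

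The crucial step is to control the $t$-integral. Changing variables via $T_t$,
$$
\int_{\bt^d} |\nabla \Phi(T_t(x))|^2 h_1(x) \, \di x = \int_{\bt^d} |\nabla \Phi(z)|^2 \rho_t(z) \, \di z \leq \lVert \rho_t \rVert_{L^\infty} \, \lVert \nabla \Phi \rVert_{L^2}^2 ,
$$
and here one invokes the classical bound $\lVert \rho_t \rVert_{L^\infty} \leq \max\{ \lVert h_1 \rVert_{L^\infty}, \lVert h_2 \rVert_{L^\infty} \}$, valid for all $t \in [0,1]$, which follows from the Monge--Ampère equation satisfied by $T$ together with concavity of $s \mapsto s^{1/d}$ applied to the Jacobian determinant of $T_t = (1-t)\,\mathrm{Id} + t\, T$. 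Combining the three displays produces $\lVert \nabla \Phi \rVert_{L^2}^2 \leq \e^{-2} \, \max_i \lVert h_i \rVert_{L^\infty}^{1/2} \, \lVert \nabla \Phi \rVert_{L^2} \, W_2(h_1, h_2)$, which is \eqref{Loep}.

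I expect the main obstacle to be making the displacement-interpolation step rigorous when the $h_i$ are merely bounded, where one must justify both the change of variables and the $L^\infty$ bound on $\rho_t$. This is handled by first regularising $h_1$ (and, if necessary, perturbing it so that it is bounded below, which guarantees that the Brenier map is well-defined and $\rho_t$ is a genuine density), establishing the inequality for the approximations, and then passing to the limit using stability of $W_2$ under weak convergence and lower semicontinuity of $\lVert \cdot \rVert_{L^2}$; none of the constants change in the limit. The torus geometry introduces only cosmetic modifications — displacement paths are taken in $\bt^d$ and all integrals are over the fundamental domain — and does not affect the final bound. Alternatively, one may simply cite Lemma~3.2 of \cite{HKI15} directly.
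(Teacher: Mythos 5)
Correct, and worth noting up front: the paper itself gives no proof of this lemma — it is quoted verbatim as Lemma~3.2 of \cite{HKI15} — so your argument is a reconstruction of the proof behind the citation rather than an alternative to anything in the paper. What you write is indeed the standard Loeper-type argument (as in \cite{Loe}, adapted to $\bt^d$ in \cite{HKI15}): energy identity for $\Phi=\Psi_1-\Psi_2$, rewriting $\int\Phi\,(h_1-h_2)$ via the optimal map, the line-integral plus Cauchy--Schwarz step, and the displacement-interpolant bound $\lVert\rho_t\rVert_{L^\infty}\le\max_i\lVert h_i\rVert_{L^\infty}$ (Minkowski's determinant inequality applied to the Jacobian of $T_t$), and the exponents combine exactly as you state to give $\e^{-2}\max_i\lVert h_i\rVert_{L^\infty}^{1/2}W_2(h_1,h_2)$. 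Two small points to make the write-up airtight: since $h_i-1\in L^\infty$, elliptic regularity gives $\Psi_i\in W^{2,p}$ for every $p<\infty$, hence $\nabla\Phi$ is (Hölder) continuous, which legitimises the fundamental-theorem-of-calculus step along $t\mapsto T_t(x)$ without any smoothing of $\Phi$; and on the torus the interpolation should be taken along geodesics via McCann's theorem (the regularisation/lower-bound perturbation of $h_1$ you invoke for the Brenier map is not actually needed for existence of the optimal map or for the $L^\infty$ bound on $\rho_t$, though it does no harm), with the usual caveat that one works with the distance used to define $W_2$ — the paper's convention is the Euclidean distance on the fundamental domain — which only affects constants in a harmless way.
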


Applying the above with $h_1 = \chi_r * \chi_r * \r_{f_{\eps,r}}$, $h_2 = \chi_r * \chi_r * \r_{\mu^N_{\eps,r}}$ gives
\begin{align}
I_1 & \leq \e^{-4} \lVert \r_{f_{\eps,r}} \rVert_{L^{\infty}} \max(\lVert \chi_r * \chi_r * \r_{f_{\eps,r}}\rVert_{L^{\infty}}, \lVert \chi_r * \chi_r * \r_{\mu^N_{\eps,r}} \rVert_{L^{\infty}}) \\
& \qquad \times W_2( \chi_r * \chi_r * \r_{f_{\e,r}}, \chi_r * \chi_r *\r_{\mu^N_{\eps,r}})^2.
\end{align}

Since the $L^{\infty}$ norm is not increased by convolution with a mollifier of mass one, the mollified densities can be controlled as follows:
\be
\max(\lVert \chi_r*\chi_r*\rho_{f_{\e,r}} \rVert_{L^{\infty}}, \lVert\chi_r*\chi_r*\rho_{\mu^N_{\e,r}} \rVert_{L^{\infty}} )\le \max(\lVert \rho_{f_{\e,r}} \rVert_{L^{\infty}}, \lVert \chi_r*\rho_{\mu^N_{\e,r}} \rVert_{L^{\infty}})
\ee

It remains to estimate $ \chi_r*\rho_{\mu^N_{\e,r}}$. The idea is to use that $\mu^N_{\e,r}$ should be close to $f_{\e,r}$, which has bounded mass density. For this we need the following estimate, which is Lemma 4.3 of \cite{Laz15} (see also \cite{BGV}, Prop 2.1). The proof is given there for the case of measures on $\br^d$, but adapts easily to the case of $\bt^d$.
\begin{lem} 
\label{Lem_mu_moll} Let $\nu$ be a probability measure on $\bt^d$ and $h \in L^{\infty}(\bt^d)$ a probability density function. Then, for all $r > 0$,
\begin{equation} \label{Est_mu_moll}
\lVert \chi_r * \nu \rVert_{L^{\infty}} \leq C \left ( \rVert h \lVert_{L^{\infty}} + r^{-(2 + d)} W_2^2(h, \nu) \right ).
\end{equation}
\end{lem}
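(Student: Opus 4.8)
The plan is to prove the inequality pointwise in $x \in \bt^d$ and then take the supremum. First I would fix $x$ and pick a coupling $\pi \in \Pi(h, \nu)$ that is optimal for $W_2(h,\nu)$, writing $(z,y)$ for its variables so that $z$ has law $h \, \di z$ and $y$ has law $\nu$. Since $\chi_r$ has total mass $1$ and is supported in a ball of radius $r$, one can write $(\chi_r * \nu)(x) = \int \chi_r(x-y)\,\di\pi(z,y)$, and the point is to split this integral according to whether the transport plan moves mass a distance at most $r$ or more than $r$:
\be
(\chi_r * \nu)(x) = \underbrace{\int_{\{|y-z| \le r\}} \chi_r(x-y)\,\di\pi}_{=:J_1} \; + \; \underbrace{\int_{\{|y-z| > r\}} \chi_r(x-y)\,\di\pi}_{=:J_2} .
\ee

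For the near-diagonal term $J_1$, the integrand vanishes unless $|x-y| < r$, and in that case the triangle inequality gives $|x-z| < 2r$. Bounding $\chi_r$ by $\lVert \chi_r \rVert_{L^\infty} \le C r^{-d}$ and using that the $z$-marginal of $\pi$ is $h$, I would estimate $J_1 \le C r^{-d}\int_{B(x,2r)} h(z)\,\di z \le C\lVert h\rVert_{L^\infty}$, since $|B(x,2r)| \le C_d r^d$. For the far term $J_2$, I would again bound $\chi_r$ by $C r^{-d}$ and control $\pi(\{|y-z|>r\})$ by Markov's inequality together with the optimality of $\pi$, namely $\pi(\{|y-z|>r\}) \le r^{-2}\int |y-z|^2\,\di\pi = r^{-2}W_2^2(h,\nu)$, so that $J_2 \le C r^{-(d+2)} W_2^2(h,\nu)$. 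Adding the two contributions and taking the supremum over $x$ yields \eqref{Est_mu_moll}.

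The only genuine idea here is the choice of splitting scale. A naive Lipschitz bound $|\chi_r(x-y)-\chi_r(x-z)| \le \lVert \nabla\chi_r\rVert_{L^\infty}|y-z|$ would instead give a correction of order $r^{-(d+1)}W_1(h,\nu)$, which is weaker than the claimed $r^{-(d+2)}W_2^2(h,\nu)$ precisely in the regime $W_2(h,\nu) \lesssim r$ in which the lemma will be applied (with $\nu = \mu^N_{\eps,r}$ close to $f_{\eps,r}$); cutting the transport plan at distance $r$ and treating the far part crudely via Markov is what recovers the quadratic gain. I do not expect a serious obstacle. The one point deserving a word of care is the transition from $\br^d$, as in \cite{Laz15}, to $\bt^d$: for $r$ small the support of $\chi_r$ does not wrap around (and for general $r$ one works with the periodised mollifier), while the normalisation $\lVert \chi_r\rVert_{L^\infty}\le C r^{-d}$ and the volume bound $|B(x,2r)| \le C_d r^d$ are unaffected, so the argument above carries over verbatim.
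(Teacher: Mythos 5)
Your proof is correct, and it is essentially the argument of the reference the paper points to for this lemma (the paper itself gives no proof, citing Lemma 4.3 of \cite{Laz15}, whose proof is exactly this coupling-plus-Chebyshev splitting at scale $r$, with the same near/far dichotomy and the same observation that the torus case changes nothing). Nothing further is needed.
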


We apply this with $h =  \r_{f_{\eps,r}} $, $\nu = \r_{\mu^N_{\eps,r}}$. Combining this with \eqref{Est_W_conv} and \eqref{Est_Wx_D}, we conclude that
\begin{align}
I_1 & \leq C \e^{-4} \lVert \r_{f_{\eps,r}}\rVert_{L^{\infty}} \left ( \rVert \r_{f_{\eps,r}} \lVert_{L^{\infty}} + r^{-(2+d)} W_2^2(\r_{f_{\eps,r}},\r_{\mu^N_{\eps,r}}) \right ) W_2^2(\r_{f_{\eps,r}},\r_{\mu^N_{\eps,r}}) \\
& \leq C \e^{-4} \lVert \r_{f_{\eps,r}} \rVert_{L^{\infty}} \left ( \rVert \r_{f_{\eps,r}} \lVert_{L^{\infty}} + r^{-(2+d)} \lambda^{-2} D(t) \right ) \lambda^{-2} D(t) \\
& \leq C \e^{-4} M \left ( M + r^{-(d+2)} \lambda^{-2} D(t) \right) \lambda^{-2} D(t) .
\end{align}

Next we estimate $I_2$. The regularisation procedure gives the force some Lipschitz regularity. This is quantified in the following result, which is an adaptation of Lemma 4.2(ii) of \cite{Laz15} to the case of the torus.
\begin{lem} \label{Lem_lip_moll}
Let $h \in L^{\infty}$. There exists $C > 0$ such that
\begin{equation} \label{Lip_moll}
\lVert \chi_r * K * h \rVert_{\text{Lip} } \leq C | \log{r} | (1 + \lVert h \rVert_{L^{\infty}}) .
\end{equation}
\end{lem}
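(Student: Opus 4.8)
The plan is to reduce the Lipschitz bound to an $L^1$ estimate on the kernel $\nabla(\chi_r * K)$, which exploits the fact that $\nabla K$ has a singularity of exactly the borderline, logarithmically non-integrable order $|x|^{-d}$. First I would use Lemma~\ref{Lem_G} to split $K = K_{\mathrm{sing}} + K_0$ with $K_{\mathrm{sing}}(x) = C_1 x / |x|^{d}$ and $K_0 \in C^\infty(\bt^d)$, so that $|K(x)| \lesssim |x|^{-(d-1)}$ and $|\nabla K(x)| \lesssim |x|^{-d}$ on the fundamental domain; every contribution of the smooth remainder $K_0$ to the quantities below is bounded by a universal constant, so it may be ignored. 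Since convolution is associative and $\chi_r$ is smooth, $\nabla(\chi_r * K * h) = \big(\nabla(\chi_r * K)\big) * h$, whence, by Young's convolution inequality and $\lVert \chi_r \rVert_{L^1} = 1$, $\lVert \chi_r * K * h \rVert_{\text{Lip}} \le \lVert \chi_r * K * h \rVert_{L^\infty} + \lVert \nabla(\chi_r * K) \rVert_{L^1(\bt^d)} \lVert h \rVert_{L^\infty} \le \big( \lVert K \rVert_{L^1(\bt^d)} + \lVert \nabla(\chi_r * K) \rVert_{L^1(\bt^d)} \big) \lVert h \rVert_{L^\infty}$, where $\lVert K \rVert_{L^1(\bt^d)} < \infty$ since $|x|^{-(d-1)}$ is locally integrable for every $d \ge 1$. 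Hence the lemma follows once we prove $\lVert \nabla(\chi_r * K) \rVert_{L^1(\bt^d)} \lesssim |\log r|$, because $\lVert h \rVert_{L^\infty} \le 1 + \lVert h \rVert_{L^\infty}$.

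To prove that $L^1$ bound I would split the fundamental domain into $\{|x| \le 2r\}$ and $\{|x| > 2r\}$ and, in each region, place the gradient on whichever of the factors $\chi_r$, $K$ is harmless there. On $\{|x| > 2r\}$ I would write $\nabla(\chi_r * K) = \chi_r * \nabla K$: since $\chi_r$ is supported in the ball of radius $r$, for such $x$ the convolution averages $\nabla K$ over a ball on which $|\nabla K| \lesssim |x|^{-d}$, so $|\chi_r * \nabla K(x)| \lesssim |x|^{-d}$, and integrating in polar coordinates gives $\int_{2r}^{c} \rho^{-d} \cdot \rho^{d-1}\, d\rho = \int_{2r}^{c} \rho^{-1}\, d\rho \lesssim |\log r|$. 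On $\{|x| \le 2r\}$ I would instead write $\nabla(\chi_r * K) = (\nabla \chi_r) * K$, using $\lVert \nabla \chi_r \rVert_{L^\infty} \lesssim r^{-d-1}$ and that $\nabla \chi_r$ is supported in the ball of radius $r$: then $|(\nabla \chi_r) * K(x)| \lesssim r^{-d-1} \int_{|z| \le 3r} |z|^{-(d-1)}\, dz \lesssim r^{-d-1} \cdot r = r^{-d}$, so $\int_{|x| \le 2r} |(\nabla \chi_r) * K(x)|\, dx \lesssim r^{-d} \cdot r^{d} = O(1)$. Adding the two contributions, plus the $O(1)$ from the smooth part $K_0$, yields $\lVert \nabla(\chi_r * K) \rVert_{L^1(\bt^d)} \lesssim 1 + |\log r| \lesssim |\log r|$, the last step being valid since $r$ stays bounded away from $1$ in all applications.

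The only genuinely delicate point — and the reason a logarithm appears rather than a negative power of $r$ — is the region near the singularity, $|x| \lesssim r$: there $\nabla K \sim |x|^{-d}$ is truly non-integrable, so one cannot keep the derivative on $K$; transferring it onto $\chi_r$ replaces the non-integrable singularity by the factor $r^{-d-1}$ produced by differentiating the mollifier, which is then absorbed by the volume $r^d$ of the region together with the extra power of $r$ gained because $|z|^{-(d-1)}$ is being integrated against $d$-dimensional Lebesgue measure. Everything else is elementary, and the passage from $\br^d$ (this is Lemma~4.2(ii) of \cite{Laz15}) to $\bt^d$ causes no difficulty, since by Lemma~\ref{Lem_G} the torus kernel $K$ differs from the Euclidean Coulomb field only by the smooth periodic correction $K_0$, whose contributions are uniformly bounded.
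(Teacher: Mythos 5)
Your proof is correct, and it rests on the same core mechanism as the paper's: at scale $r$ the derivative is transferred onto the mollifier, away from the singularity it stays on $K$, and the logarithm arises from integrating $|x|^{-d}$ over the annulus $r \lesssim |x| \lesssim 1$. The implementations differ in how this is organised. The paper never estimates $\lVert \nabla(\chi_r * K)\rVert_{L^1}$ as a single object; instead it multiplies the singular kernel $x/|x|^d$ by a smooth cutoff $\eta$ at a scale $l$, estimates the three resulting terms $(\nabla\chi_r)*(\eta\, x|x|^{-d})*h$, $\chi_r*\bigl[(1-\eta)\nabla(x|x|^{-d})\bigr]*h$ and the commutator-type term $\chi_r*\bigl(|\nabla\eta|\,|x|^{-(d-1)}\bigr)*h$ by Young's inequality in $L^\infty$, obtaining $C(1 + l r^{-1} - \log l)\lVert h\rVert_{L^\infty}$, and then optimises $l = r$. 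You instead reduce everything at the outset, via Young, to the single bound $\lVert \nabla(\chi_r*K)\rVert_{L^1(\bt^d)} \lesssim |\log r|$, and prove it by splitting the domain at $|x| = 2r$ and using the two representations $(\nabla\chi_r)*K$ and $\chi_r*\nabla K$ of the same function in the inner and outer regions; this avoids the cutoff and the commutator term and makes the $r$-dependence transparent without an optimisation step, at the price of pointwise rather than purely norm-based estimates. Both arguments share the same minor glosses, equally present in the paper: the decomposition $K = C_1 x/|x|^d + K_0$ with $K_0$ smooth is justified in Lemma \ref{Lem_G} only near the origin, so strictly one should localise the singular part before arguing on all of $\bt^d$, and the final absorption of additive constants into $C|\log r|(1+\lVert h\rVert_{L^\infty})$ uses that $r$ stays bounded away from $1$.
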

\begin{proof}
First, using the representation of $K$ stated in Lemma~\ref{Lem_G}, we have 
\be
| \nabla (\chi_r * K * h) | \leq C |\nabla(\chi_r * \frac{x}{|x|^d} * h)| + C |\chi_r * \nabla K_0 * h| .
\ee

Since $K_0$ is smooth,
\be
\lVert \chi_r * \nabla K_0 * h \rVert_{L^{\infty}(\bt^d)} \leq \lVert \chi_r \rVert_{L^1(\bt^d)} \lVert\nabla K_0 \rVert_{L^1(\bt^d)} \lVert h \rVert_{L^{\infty}(\bt^d)} 
\ee

Let $\eta$ be a radially symmetric smooth bump function with $0 \leq \eta \leq 1$ such that
\be
\begin{array}{c} \label{eta_support}
\text{supp} \{\eta \}  \subset B_{2 l} \\
\text{supp} \{1 - \eta \} \cap B_{l} =  \emptyset ,
\end{array}
\ee
for some $l > 0 $ to be chosen later. We can take $\eta$ to satisfy 
\be \label{est_gradeta} |\nabla \eta| \leq C l^{-1}\ee
 for some $C > 0$. Then
  \begin{align} 
\left \lVert \nabla \left ( \chi_r * \frac{x}{|x|^d} * h \right ) \right \rVert_{L^{\infty}(\bt^d)} & \leq \left \lVert \nabla \[ \chi_r * \left ( \eta \frac{x}{|x|^d} \right  ) * h \] \right \rVert_{L^{\infty}(\bt^d)} + \left \lVert \nabla \[ \chi_r * \left ( (1 - \eta) \frac{x}{|x|^d} \right  ) * h \] \right \rVert_{L^{\infty}(\bt^d)} \\
& \leq \left \lVert (\nabla  \chi_r ) * \left ( \eta \frac{x}{|x|^d} \right  ) * h \right \rVert_{L^{\infty}(\bt^d)} +   \left \lVert \chi_r * \[ (1 - \eta) \nabla \left ( \frac{x}{|x|^{-d}} \right) \] * h \right \rVert_{L^{\infty}(\bt^d)} \\ \notag
& \qquad \qquad + \left \lVert \chi_r * \left ( |\nabla (1- \eta)| \frac{x}{|x|^{d}} \right ) * h \right \rVert_{L^{\infty}(\bt^d)} \\
& \leq \left \lVert \left (\nabla  \chi_r \right ) * \left ( \eta \frac{x}{|x|^d} \right  ) * h \right \rVert_{L^{\infty}(\bt^d)} + d \, \left \lVert \chi_r * \[ (1 - \eta) |x|^{-d} \] * h) \right \rVert_{L^{\infty}(\bt^d)} \\ \notag
& \qquad \qquad + \left \lVert \chi_r * \left( |\nabla \eta| |x|^{-(d-1)} \right ) * h \right \rVert_{L^{\infty}(\bt^d)} .
\end{align}
We estimate each term using Young's inequality: for any $p,q,r \in [1, \infty]$ satisfying $\frac{1}{p} + \frac{1}{q} = 1 + \frac{1}{r}$,
$$
\lVert f * g \rVert_{L^{r}} \leq \lVert f \rVert_{L^{p}} \lVert g \rVert_{L^{q}}. 
$$
For the first term, we apply Young's inequality twice to get
\begin{align}
\left \lVert (\nabla \chi_r ) * \left ( \eta \frac{x}{|x|^d} \right  ) * h \right \rVert_{L^{\infty}(\bt^d)} & \leq \left \lVert  \nabla \chi_r \right \rVert_{L^{1}(\bt^d)} \left \lVert \eta |x|^{-(d-1)} \right \rVert_{L^1(\bt^d)} \left \lVert h \right \rVert_{L^{\infty}(\bt^d)} \\
& \leq r^{-1} \left \lVert \nabla \chi  \right \rVert_{L^{1}(\bt^d)} \left \lVert |x|^{-(d-1)} \right \rVert_{L^1(B_{2 l })} \left \lVert h \right \rVert_{L^{\infty}(\bt^d)} \\
& \leq C \frac{l}{r} \left  \lVert h \right \rVert_{L^{\infty}(\bt^d)} .
\end{align}
The second line follows from scaling $\chi_r$ and the support condition on $\eta$. The third line follows from integrating $|x|^{-(d-1)}$ over the given set.

For the second term we have
\begin{align}
\lVert \chi_r * [ (1 - \eta) |x|^{-d} ] * h) \rVert_{L^{\infty}(\bt^d)} & \leq \lVert \chi_r \rVert_{L^{1}(\bt^d)} \lVert (1-\eta) |x|^{-d} \rVert_{L^1(\bt^d)} \lVert h \rVert_{L^{\infty}(\bt^d)} \\
& \leq \lVert |x|^{-d} \rVert_{L^1(\bt^d \setminus B_l)} \lVert h \rVert_{L^{\infty}(\bt^d)} \\
& \leq C (1 - \log{l})  \lVert h \rVert_{L^{\infty}(\bt^d)}
\end{align}
Similarly to the previous estimate, the second line follows from $\chi_r$ having unit mass and from the support condition on $1 -\eta$. The third line follows from integrating $|x|^{-d}$ over the given set.

For the third term we have
\begin{align}
\lVert \chi_r * [ | \nabla \eta | |x|^{-(d-1)} ] * h \rVert_{L^{\infty}(\bt^d)} & \leq \lVert \chi_r \rVert_{L^{1}(\bt^d)} \lVert | \nabla \eta | |x|^{-(d-1)} \rVert_{L^1(\bt^d)} \lVert h \rVert_{L^{\infty}(\bt^d)} \\
& \leq \lVert |\nabla \eta| \rVert_{L^{\infty}(\bt^d)} \lVert |x|^{-(d-1)} \rVert_{L^1(B_{2l} \setminus B_l)} \lVert h \rVert_{L^{\infty}(\bt^d)} \\
& \leq C l^{-1} \lVert |x|^{-(d-1)} \rVert_{L^1(B_{2l} \setminus B_l)} \lVert h \rVert_{L^{\infty}(\bt^d)} \\
& \leq C  \lVert h \rVert_{L^{\infty}(\bt^d)}
\end{align}
The second line follows from H\"{o}lder's inequality, the fact that $\chi_r$ has unit mass, and the fact that $\eta$ is constant outside of $B_{2l} \setminus B_l$. For the third line we use the bound \eqref{est_gradeta} that we assumed on $\nabla \eta$. The fourth line follows by integrating $|x|^{-d}$ over the relevant set.

Thus
\be
\left \lVert \nabla \left (\chi_r * K * h \right) \right \rVert_{L^{\infty}(\bt^d)} \leq C\left(1 + l r^{-1} - \log{l} \right)  \lVert h \rVert_{L^{\infty}(\bt^d)} .
\ee

Choosing $l = r$ gives the desired result.

\end{proof}

To estimate the term $I_2$ appearing in \eqref{Est_Dprime_1} we apply the above result with $h = \chi_r * \r_{\mu^N_{\eps,r}}$ and we obtain:
\begin{align}
I_2 &= \int_{ (\bt^{d} \times \br^{d})^2} | E_{\eps,r}[\mu^N_{\e,r}](x) - E_{\eps,r}[\mu^N_{\e,r}](y) |^2 \di \pi \\
&= \int_{ (\bt^{d} \times \br^{d})^2} | \chi_r * \eps^{-2} K * (\chi_r * \r_{\mu^N_{\eps, r}}) (x) - \chi_r * \eps^{-2} K * (\chi_r * \r_{\mu^N_{\eps, r}}) (y) |^2 \di \pi \\
& \leq \int_{ (\bt^{d} \times \br^{d})^2} \eps^{-4} \lVert \chi_r *  K * (\chi_r * \r_{\mu^N_{\eps, r}}) \rVert^2_{\text{Lip}} \, |x-y|^2 \di \pi \\
& \leq C \eps^{-4} |\log{r}|^2 (1 + \lVert \chi_r * \r_{\mu^N_{\eps,r}} \rVert_{L^{\infty}})^2 \int_{ (\bt^{d} \times \br^{d})^2} |x-y|^2 \di \pi
\end{align}

To this we apply Lemma \ref{Lem_mu_moll} and \eqref{Est_Wx_D}, which gives
\begin{align}
I_2 & \leq C \eps^{-4} |\log{r}|^2 \left (1 + \lVert \r_{f_{\eps,r}} \rVert_{L^{\infty}} + r^{-(d+2)} W_2^2(\r_{\mu^N_{\eps,r}}, \r_{f_{\eps, r}}) \right )^2 \int_{ (\bt^{d} \times \br^{d})^2} |x-y|^2 \di \pi \\
& \leq C \eps^{-4} |\log{r}|^2 \left (1 + \lVert \r_{f_{\eps,r}} \rVert_{L^{\infty}} + r^{-(d+2)} \lambda^{-2} D \right )^2 \lambda^{-2} D \\
& \leq C \eps^{-4} |\log{r}|^2 \left (M + r^{-(d+2)} \lambda^{-2} D \right )^2 \lambda^{-2} D,
\end{align}
where $M$ is as in \eqref{Def_M} and \eqref{M_z}.

Altogether this results in the differential inequality
\begin{equation} \label{Est_Dprime_2}
D' \leq (\lambda + \alpha)D + \frac{C \eps^{-4}}{\alpha} \left ( | \log{r} |^2 \left (M + r^{-(2+d)} \lambda^{-2} D \right )^2  + M \left(M + r^{-(2+d)} \lambda^{-2} D \right) \right ) \lambda^{-2} D .
\end{equation}

\subsection{Closing the estimate and choice of parameters}

Following \cite{Laz15}, we consider the regime $D \leq \lambda^2 r^{2+d}$ to remove the effect of the non-linear part of the estimate. As long as this condition holds, the estimate becomes
\begin{equation} \label{Est_Dprime_3}
D' \leq (\lambda + \alpha)D + \frac{C \eps^{-4}}{\alpha} M^2 |\log{r}|^2 \lambda^{-2} D .
\end{equation}
Optimising over $\alpha$ leads us to choose $\alpha = C \lambda^{-1} \eps^{-2} |\log{r}| M$. With this choice \eqref{Est_Dprime_3} becomes
\begin{equation} \label{Est_Dprime_OptA}
D' \leq (\lambda + C \eps^{-2} |\log{r}| M \lambda^{-1})D .
\end{equation}
The optimal value of $\lambda$ is $C \eps^{-1} |\log{r}|^{1/2} M^{1/2}$, giving
\begin{equation} \label{Est_Dprime_OptL}
D' \leq C \eps^{-1} |\log{r}|^{1/2} M^{1/2} D .
\end{equation}

Thus
\begin{equation} \label{Est_Gron_Full}
D(t) \leq D(0) e^{C \eps^{-1} |\log{r}|^{1/2} M^{1/2} t} ,
\end{equation}
provided that $D$ never exceeds $\lambda^2 r^{d+2}$. 

\subsection{Modified distance}

In order to unify \eqref{Est_Gron_Full} into a single statement, we define a new distance $\hat{D}$ by truncating $D$ at the level $\lambda^2 r^{d+2}$. Using this \eqref{Est_Gron_Full} can then be rephrased as a straightforward Gronwall estimate for the new distance with no extra conditions on the magnitude of $\hat{D}$. We also rescale the distance so as to work with quantities of order 1.
\be
\hat{D}(t) = \min \{1 , \lambda^{-2} r^{-(d+2)} \sup_{s \in [0,t]} D(s) \} .
\ee
Then \eqref{Est_Gron_Full} implies that
\be
\hat{D}(t) \leq \hat{D}(0) e^{C \eps^{-1} |\log{r}|^{1/2} M^{1/2} t} .
\ee

When $\hat{D}$ is smaller than the truncation level, it can be used to control the Wasserstein distance. That is, if there exists some initial coupling $\pi_0$ such that $\hat{D}(t) < 1$, then by \eqref{ineq_W-D}
\be \label{Est_W-Dhat}
\sup_{[0,t]} W_2^2(f_{\e, r}, \mu^N_{\e,r}) \leq \lambda^2 r^{d+2} \inf_{\pi_0} \hat{D} (t) .
\ee

\subsection{Convergence rates for $r$ and $\e$}

Our next goal is to show that we can choose $\e = \e_N$, $r = r_N$ so that for sufficiently well-placed initial configurations we have $\hat{D}(T) \to 0$ as $N \to \infty$. By the previous discussion \eqref{Est_W-Dhat} this will imply convergence for the Wasserstein distance. The key point is to choose $\e$ and $r$ so as to prevent the exponential growth factor $e^{C \eps^{-1} |\log{r}|^{1/2} M^{1/2} t}$ from exploding too quickly with $N$; then the convergence will hold as long as $\hat{D}(0)$ converges to 0 sufficiently quickly.

\begin{lem} \label{Lem_valid}
Let $f_{0, \e}$ be initial data satisfying Assumption~\ref{Hyp_data}. Recall that we have defined the parameter $\zeta$ associated with these data in \eqref{Def_z:2d}-\eqref{Def_z:3d}.
There exists $C$ such that the following holds: let $\beta > 0$, and let $\eps = \eps_N$, $r = r_N$ be chosen to satisfy
\be \label{Gron_r_eps}
r < e^{- C \eps^{-2 - d \zeta} \, \frac{  T^2}{\beta^2}} .
\ee
Assume that for some $\eta > \beta$ the initial configurations $\mu^N_{\e}(0)$ satisfy
\be \label{Hyp_init_conv_2}
\lim_{N \rightarrow \infty} \frac{W_2(\mu^N_{\eps}(0), f_{0, \eps})}{\eps^{- \gamma} r^{1 + d/2 + \eta/2}} = 0 .
\ee
Then as $N \to \infty$, $\inf_{\pi_0} \hat{D} (T) \to 0$ .

\end{lem}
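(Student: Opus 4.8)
The plan is to combine the Gronwall bound \eqref{Est_Gron_Full} for the truncated distance $\hat D$ with the control \eqref{Est_infD-W} on the initial coupling, and then to check that the hypothesis \eqref{Gron_r_eps} is exactly what makes the exponential growth factor harmless against the polynomial smallness of the initial data. First I would record the size of the exponent. By \eqref{M_z}, $M = C_T \eps^{-\zeta d}$, so $M^{1/2} = C_T^{1/2}\eps^{-\zeta d/2}$ and the Gronwall exponent in \eqref{Est_Gron_Full}, restated for $\hat D$, at time $t=T$ equals $a\,|\log r|^{1/2}$ where
\[
a := C\, C_T^{1/2}\, T\, \eps^{-1-\zeta d/2},
\]
with $C$ the absolute constant coming from \eqref{Est_Dprime_OptL}. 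Writing $L := |\log r| = -\log r > 0$ (recall $r<1$), the bound reads $\hat D(T) \le \hat D(0)\, e^{aL^{1/2}}$, uniformly in the choice of initial coupling $\pi_0$ since $a$ does not depend on $\pi_0$.

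Next I would bound $\inf_{\pi_0}\hat D(0)$. Since $\hat D(0) = \min\{1,\lambda^{-2} r^{-(d+2)} D(0)\}$, estimate \eqref{Est_infD-W} yields
\[
\inf_{\pi_0}\hat D(0) \le \lambda^{-2} r^{-(d+2)} \inf_{\pi_0} D(0) \le \tfrac12\, r^{-(d+2)}\, W_2^2\big(f_{0,\e},\mu^N_{\e}(0)\big).
\]
Combining this with the Gronwall bound and inserting the normalisation from \eqref{Hyp_init_conv_2},
\[
\inf_{\pi_0}\hat D(T) \le \tfrac12\, \eps^{-2\gamma} r^{\eta}\, e^{aL^{1/2}} \cdot \frac{W_2^2\big(f_{0,\e},\mu^N_{\e}(0)\big)}{\eps^{-2\gamma} r^{2+d+\eta}},
\]
and the last factor tends to $0$ as $N\to\infty$ by assumption. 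It therefore suffices to show that $\eps^{-2\gamma} r^{\eta} e^{aL^{1/2}}$ stays bounded along the sequence.

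The analytic heart of the argument is then the elementary comparison that a stretched exponential $e^{aL^{1/2}}$ in $\sqrt L$ is dominated by a genuine exponential $e^{-\eta L}$ once $L$ is large relative to $a^2$: since $\eta > \beta$, one has $-\eta L + aL^{1/2} \le -\tfrac{\beta}{2} L$ whenever $L \ge 4a^2/\beta^2$. Because $4a^2/\beta^2 = 4C^2 C_T\, T^2 \eps^{-2-\zeta d}/\beta^2$, choosing the constant in \eqref{Gron_r_eps} to be at least $4C^2 C_T$ turns the hypothesis $r < e^{-C\eps^{-2-d\zeta} T^2/\beta^2}$ into precisely the inequality $L \ge 4a^2/\beta^2$. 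Using $r^\eta = e^{-\eta L}$ this gives
\[
\eps^{-2\gamma} r^{\eta} e^{aL^{1/2}} = \eps^{-2\gamma} e^{-\eta L + aL^{1/2}} \le \eps^{-2\gamma} e^{-\beta L/2} \le \eps^{-2\gamma} \exp\!\Big(-\tfrac{C}{2\beta}\, T^2\, \eps^{-2-\zeta d}\Big),
\]
which is bounded: it tends to $0$ if $\eps_N \to 0$ (the power $\eps^{-2-\zeta d}$ dominates $\log\eps^{-1}$), and is trivially bounded if $\eps_N$ stays away from $0$. Substituting back into the previous display yields $\inf_{\pi_0}\hat D(T) \to 0$, which is the claim; all constants introduced depend only on $T,\beta,\eta,\gamma$ and $\zeta$ (through $C_T$).

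I expect the only delicate point to be bookkeeping rather than analysis: one must verify that the constant $C_T$ from the density bound \eqref{M_z} feeds through $a$ in a controlled way, and that the constant one is forced to put into \eqref{Gron_r_eps} can be fixed independently of $N$, $\eps$ and $r$. The substantive mechanism — polynomial decay in $r$ overcoming $\exp\!\big(\mathrm{const}\cdot \eps^{-1-\zeta d/2}\sqrt{|\log r|}\big)$ exactly under \eqref{Gron_r_eps} — is the one-line comparison above.
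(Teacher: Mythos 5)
Your proposal is correct and takes essentially the same route as the paper: the Gronwall bound for $\hat{D}$, the initial-coupling bound via \eqref{Est_infD-W} together with the normalised hypothesis \eqref{Hyp_init_conv_2}, and the observation that \eqref{Gron_r_eps} turns the growth factor $e^{C\eps^{-1}M^{1/2}|\log r|^{1/2}T}$ into (at worst) a power $r^{-\beta}$-type factor that is beaten by $r^{\eta}$ since $\eta>\beta$. The only cosmetic difference is bookkeeping of the $\eps^{-2\gamma}$ factor: you absorb it using the $\beta/2$ slack and the explicit exponential smallness of $r$ in $\eps$, whereas the paper converts $\eps^{-1}\leq C|\log r|^{1/(2+d\zeta)}$ into a polylogarithmic factor and lowers the exponent to some $\eta'\in(\beta,\eta)$.
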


\begin{proof}
First we check that $\inf_{\pi_0} \hat{D}(0) \to 0$. For convenience we will define a function $\omega$ by
\be \label{Def_omega}
\omega(N) = \frac{W_2(\mu^N_{\eps}(0), f_{0, \eps})}{\eps^{- \gamma} r^{1 + d/2 + \eta/2}} \, ;
\ee
thus \eqref{Hyp_init_conv_2} implies that $\omega$ is bounded. Recalling \eqref{Est_infD-W}, 
\be
\inf_{\pi_0} D(0) \leq \frac{1}{2} \lambda^2 W^2_2(\mu_{\eps}^N(0), f_{0, \eps}) .
\ee
Thus, since by definition $\hat{D}(0) \leq  \lambda^{-2} r^{-(d+2)} D(0)$, it follows that
\be
\inf_{\pi_0} \hat{D}(0) \leq \frac{1}{2} r^{-(d+2)} W^2_2(\mu_{\eps}^N(0), f_{0, \eps}) .
\ee
By definition of $\omega$ we have
\be
\inf_{\pi_0} \hat{D}(0) \leq \frac{1}{2} r^{\eta} \e^{- 2 \gamma} \omega(N)^2 .
\ee
If \eqref{Gron_r_eps} holds, then $\e^{-1} \leq C_{\beta, T} |\log{r}|^{\frac{1}{2 + d \zeta}}$ and thus
\be
\inf_{\pi_0} \hat{D}(0) \leq C r^{\eta} |\log{r}|^{\frac{2 \gamma}{2 + d \zeta}} \omega(N)^2 .
\ee
Then for any $\eta ' < \eta$ there exists $C = C_{\beta, T, \eta ', \gamma, \zeta}$ such that
\be
\inf_{\pi_0} \hat{D}(0) \leq C r^{\eta '}.
\ee

Next, we use the Gronwall estimate to get convergence at later times. We can do this by controlling the exponential growth factor in \eqref{Est_Gron_Full}. Observe that for $r < 1$ this factor satisfies
\be \label{growthfactor}
e^{C M^{1/2} \eps^{-1} |\log{r}|^{1/2} T} = r^{- C M^{1/2} \eps^{-1} |\log{r}|^{- 1/2} T} .
\ee
If $\e, r$ satisfy \eqref{Gron_r_eps}, then by \eqref{M_z}
\be \label{Est_exp}
C M^{1/2} \eps^{-1} |\log{r}|^{- 1/2} T \leq C T \eps^{-1 - \frac{d \zeta}{2}} |\log{r}|^{- 1/2} \leq \beta
\ee
and hence
\be \label{Est_growth}
r^{- C M^{1/2} \eps^{-1} |\log{r}|^{- 1/2} T} \leq r^{- \beta} .
\ee
This implies that for all $t \in [0,T]$
\begin{align}
\inf_{\pi_0} \hat{D}(t) & \leq \inf_{\pi_0} \, \hat{D}(0) \, r^{- \beta} \\
& \leq C r^{\eta ' - \beta} .  \label{hatD_ctrl}
\end{align}
We complete the proof by choosing $\eta ' > \beta$.

\end{proof}

We can now complete the proof of Proposition \ref{Prop_MFL}:

\begin{proof}[End of proof of Proposition \ref{Prop_MFL}] By Lemma \ref{Lem_valid}, under condition \eqref{Hyp_init_conv} we have $\inf_{\pi_0} \hat{D} (T) \to 0$ as $N \to \infty$. In particular, for $N$ sufficiently large we have $\inf_{\pi_0} \hat{D}(T) < 1$. Then as previously discussed in \eqref{Est_W-Dhat},
\be \label{W_D}
\sup_{[0,T]} W_2^2(f_{\e,r}, \mu^N_{\e,r}) \leq \lambda^2 r^{d+2} \inf_{\pi_0} \hat{D}(T) .
\ee

Recall that we chose $\lambda = C M^{1/2} \eps^{-1} |\log{r}|^{1/2} $. Thus by \eqref{Est_exp} we have $\lambda \leq C \beta T^{-1} |\log{r}|$. Hence 
\be \label{Est-lr}
\lambda^2 r^{d+2} \leq C_{\beta, T}  |\log{r}|^2 r^{d+2} .
\ee

As in the proof of Lemma~\ref{Lem_valid}, we use that \eqref{r_eps_2} implies \eqref{hatD_ctrl}:
\be \label{hatD_ctrl_rep}
\inf_{\pi_0} \hat{D}(T) \leq C r^{\eta ' - \beta} ,
\ee
for any $\beta < \eta ' < \eta$ and some $C = C_{\beta, T, \eta ' , \gamma, \zeta}$. Then by combining \eqref{Est-lr} and \eqref{hatD_ctrl_rep} with \eqref{W_D} we obtain
\be
\sup_{[0,T]} W_2^2(f_{\e, r}, \mu^N_{\e,r}) \leq C | \log{r}|^2 r^{d+2 +\eta ' - \beta} .
\ee

By adjusting $\eta '$ and $C$ so as to absorb the logarithmic factor, we may conclude that for $N$ sufficiently large,
\be
\sup_{[0,T]} W_2^2(f_{\e, r}, \mu^N_{\e,r}) \leq C r^{d+2 +\eta ' - \beta} ,
\ee
as desired.
\end{proof}

\subsection{Regularised and Unregularised Vlasov-Poisson}

In this section, we prove the following Gronwall-type estimate between solutions of the regularised and unregularised Vlasov-Poisson systems.

\begin{prop} \label{Prop_RUR} \begin{enumerate}[(i)]
\item Let $f_{\eps,r}$ be a solution of \eqref{Eqn_VP_reg} and $f_{\eps}$ a solution of \eqref{Eqn_VP_eps}, both having the same initial datum $f_{0, \eps}$. Let $M = M_{\e, T}$ be chosen such that \eqref{Def_M} is satisfied. 
Then there exists a constant $C$, independent of $r$, $M$ and $\eps$, such that for all $t \in [0,T]$
\be \label{Est_RUR}
W_2(f_{\eps, r}(t), f_{\eps}(t)) \leq C \eps^{-3/2} M^{3/4} r | \log{r} |^{-1/4} e^{C \eps^{-1} M^{1/2} | \log{r} |^{1/2} t} .
\ee
\item Let $\{f_{0, \e}\}$ be a set of initial data satisfying Assumption~\ref{Hyp_data}. Let $\zeta$ be defined as in \eqref{Def_z:2d}-\eqref{Def_z:3d}, and let $T > 0$ be fixed. If $\e = \e_N$ and $r = r_N$ are chosen to satisfy \eqref{r_eps_2} for some $\beta < 1$, then
\be \label{RUR_Wconv}
\lim_{N \to \infty} \sup_{t \in [0,T]} W_2(f_{\eps, r}(t), f_{\eps}(t)) = 0.
\ee
\end{enumerate}
\end{prop}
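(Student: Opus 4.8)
Since the two solutions are transported by \emph{different} force fields, the plan is to run the same $W_2$-Gronwall scheme as in Proposition~\ref{Prop_MFL}, with the crucial difference that here both densities are genuinely bounded in $L^\infty$ by $M$ (by the density estimates behind \eqref{M_z}), so the measure-mollification Lemma~\ref{Lem_mu_moll} and the nonlinear-in-$D$ terms disappear, but a new inhomogeneous \emph{regularisation-error} term enters. For part~(i): because $f_{\eps,r}$ and $f_\eps$ start from the same datum $f_{0,\eps}$, I take the diagonal coupling of $f_{0,\eps}$ with itself, which has $D(0)=0$, and let $\pi_t$ evolve so that its $(x,v)$-marginal follows the characteristic flow of \eqref{Eqn_VP_reg} and its $(y,w)$-marginal that of \eqref{Eqn_VP_eps}; then $\pi_t$ couples $f_{\eps,r}(t)$ and $f_\eps(t)$ for every $t$, and I work with the anisotropic functional $D(t) = \tfrac12\int \lambda^2|x-y|^2 + |v-w|^2\,\di\pi_t$ (parameter $\lambda\ge 1$ to be chosen), which dominates $\tfrac12 W_2^2(f_{\eps,r}(t),f_\eps(t))$.

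Differentiating along the dynamics gives, for any $\alpha > 0$,
$$D'(t) \le (\lambda + \alpha)D(t) + \frac{1}{\alpha}\int |E_{\eps,r}[f_{\eps,r}](x) - E_{\eps}[f_{\eps}](y)|^2\,\di\pi_t ,$$
and I would bound the force-difference by splitting it into three pieces. The first, $E_{\eps,r}[f_{\eps,r}](x) - E_{\eps,r}[f_{\eps,r}](y)$, is controlled by the Lipschitz bound of Lemma~\ref{Lem_lip_moll} applied with $h = \chi_r * \r_{f_{\eps,r}}$ (so $\|h\|_{L^\infty}\le M$), giving $\|E_{\eps,r}[f_{\eps,r}]\|_{\text{Lip}} \le C\eps^{-2}|\log r| M$, times $\int|x-y|^2\,\di\pi_t \le 2\lambda^{-2}D$. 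The second, $E_{\eps,r}[f_{\eps,r}](y) - E_{\eps,r}[f_{\eps}](y)$, is handled by the Loeper-type Lemma~\ref{Lem_HKI} together with the contraction property \eqref{Est_W_conv} and the bound $W_2^2(\r_{f_{\eps,r}},\r_{f_{\eps}}) \le 2\lambda^{-2}D$, contributing at most a constant times $\eps^{-4}M^2\lambda^{-2}D$ after integrating against the $y$-marginal $\r_{f_\eps}$. The third, $E_{\eps,r}[f_{\eps}](y) - E_{\eps}[f_{\eps}](y)$, is the regularisation error: using $K*1 = 0$ it equals $\eps^{-2}K *_x (\chi_r *_x \chi_r *_x \r_{f_\eps} - \r_{f_\eps})$ evaluated at $y$, and Lemma~\ref{Lem_HKI} with $h_1 = \chi_r*\chi_r*\r_{f_\eps}$, $h_2 = \r_{f_\eps}$ (both bounded by $M$ in $L^\infty$ since convolution with a unit-mass mollifier does not increase the sup-norm) together with Lemma~\ref{W_moll_one} gives $\|E_{\eps,r}[f_\eps] - E_\eps[f_\eps]\|_{L^2} \le C\eps^{-2}M^{1/2}r$, so this piece contributes an inhomogeneous source of size $C\eps^{-4}M^2 r^2$.

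Collecting these estimates yields a differential inequality of the form
$$D'(t) \le \Bigl(\lambda + \alpha + \frac{C\eps^{-4}|\log r|^2 M^2}{\alpha\lambda^2}\Bigr)D(t) + \frac{C\eps^{-4}M^2 r^2}{\alpha} .$$
Optimising over $\alpha$ and then over $\lambda$ exactly as in the proof of Proposition~\ref{Prop_MFL} — the choices $\alpha \sim \lambda \sim \eps^{-1}|\log r|^{1/2}M^{1/2}$ — reduces it to $D' \le \kappa D + S$ with $\kappa \sim \eps^{-1}M^{1/2}|\log r|^{1/2}$ and $S \sim \eps^{-3}M^{3/2}r^2|\log r|^{-1/2}$. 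Since $D(0) = 0$, Gronwall gives $D(t) \le (S/\kappa)(e^{\kappa t} - 1) \le (S/\kappa)e^{\kappa t}$, and $W_2(f_{\eps,r}(t),f_\eps(t)) \le \sqrt{2D(t)}$ then gives an estimate of the form \eqref{Est_RUR}. For part~(ii) I would substitute $M = C_T \eps^{-d\zeta}$ from \eqref{M_z}; then \eqref{r_eps_2} is precisely $|\log r| > C_{\beta,T}\eps^{-2-d\zeta}$, which on the one hand makes the polynomial prefactor in \eqref{Est_RUR} bounded by a power of $|\log r|$ and, on the other, forces $\kappa T \le \beta|\log r|$ once $C_{\beta,T}$ is taken large enough, so the exponential factor is at most $r^{-\beta}$. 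Hence $\sup_{t\in[0,T]} W_2(f_{\eps,r}(t),f_\eps(t)) \le C|\log r|^{1/2} r^{1-\beta}$, which tends to $0$ as $N\to\infty$ because $\beta < 1$ and $r = r_N\to 0$ along the admissible sequences (on which $\eps_N\to 0$, which through \eqref{r_eps_2} forces $r_N\to 0$).

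I expect the main obstacle to be the treatment of the regularisation-error term: the mollifier cannot be moved onto the singular kernel $K$ directly, so the argument must go through the electric potential, where the mollified and unmollified densities differ by only $O(r)$ in $W_2$ while staying bounded by $M$ in $L^\infty$ — this is exactly the gain encoded in Lemma~\ref{Lem_HKI} — and then one must carry out the two-parameter optimisation carefully so that all constants in \eqref{Est_RUR} remain independent of $r$, $M$ and $\eps$ while still closing the Gronwall loop in the presence of the new source term.
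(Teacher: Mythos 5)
Your proposal is correct and follows essentially the same argument as the paper: the same evolving diagonal coupling with $D(0)=0$, the same anisotropic functional $D$, the Lipschitz bound of Lemma~\ref{Lem_lip_moll}, the Loeper-type estimate of Lemma~\ref{Lem_HKI} with the mollification lemmas, and the same $\alpha,\lambda$ optimisation leading to the identical Gronwall inequality, with part (ii) concluded via \eqref{M_z} and \eqref{r_eps_2} exactly as in the paper. The only immaterial difference is that you split the force discrepancy into three pieces at the level of the fields, whereas the paper keeps two terms and performs the corresponding triangle inequality inside the $W_2$ bound for its second term.
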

\begin{proof}
Once again we consider couplings of $f_{\eps,r}$ and $f_{\eps}$ that evolve according to the dynamics of their respective equations. Since $f_{\eps, r}$ and $f_{\eps}$ have the same initial datum $f_{0, \eps}$, the choice $\pi_0(x,v,y,w) = f_0(x,v) \delta(x-y, v-w)$ is an optimal initial coupling, so that at time $t=0$ we have
\be \label{RUR_initial}
W_2^2(f_{\eps, r}, f_{\eps}) =  \int_{ (\bt^{d} \times \br^{d})^2 } |x-y|^2 + |v-w|^2 \di \pi_0(x,v,y,w)
\ee

We take $\pi_t$ to solve
$$
(\pt_t  + v \cdot \nabla_x + w \cdot \nabla_y) \pi_t + (E_{\e,r}[f_{\e,r}](x) \cdot \nabla_v + E_\e[f_\e](y) \cdot \nabla_w) \pi_t = 0 .
$$
Then $\pi_t$ is a coupling of $f_{\eps, r}(t)$ and $f_{\eps}(t)$ for all times $t$. We want to estimate
\be \label{RUR_Wt}
\int_{ (\bt^{d} \times \br^{d})^2 } |x-y|^2 + |v-w|^2 \di \pi_t(x,v,y,w) ,
\ee
which controls $W_2^2(f_{\eps, r}, f_{\eps})$ by definition. To this end we define an anisotropic distance $D$ as before: let
$$
D(t) = \frac{1}{2} \int_{ (\bt^{d} \times \br^{d})^2 } \lambda^2 |x-y|^2 + |v-w|^2 \di \pi_t(x,v,y,w)
$$
for some $\lambda$ to be specified later. Then, as in \eqref{Est_Dprime_1}, we estimate that
\begin{align} \label{RUR_Dprime}
D'(t) & \leq (\lambda + \alpha) D(t) +  \frac{1}{2 \alpha} \int_{ (\bt^{d} \times \br^{d})^2} | E_{\eps,r}[f_{\eps,r}](x) - E_{\eps}[f_{\eps}](y) |^2 \di \pi_t .
\end{align}

By the triangle inequality, the second term in \eqref{RUR_Dprime} may be estimated as follows:
\begin{multline} \label{RUR_triangle}
\frac{1}{2 \alpha} \int_{ (\bt^{d} \times \br^{d})^2} | E_{\eps,r}[f_{\eps,r}](x) - E_{\eps}[f_{\eps}](y) |^2 \di \pi_t \leq \frac{1}{\alpha} \underbrace{  \int_{ (\bt^{d} \times \br^{d})^2} | E_{\eps,r}[f_{\eps,r}](x) - E_{\eps,r}[f_{\eps,r}](y) |^2 \di \pi_t }_{: = I_1} \\ 
+ \frac{1}{\alpha} \underbrace{ \int_{ (\bt^{d} \times \br^{d})^2} | E_{\eps,r}[f_{\eps,r}](y) - E_{\eps}[f_{\eps}](y) |^2 \di \pi_t }_{: = I_2}. 
\end{multline}

The term $I_1$ is estimated using the Lipschitz regularity of the regularised forces (Lemma \ref{Lem_lip_moll}):

\begin{align}
I_1 &= \int_{ (\bt^{d} \times \br^{d})^2} | E_{\eps,r}[f_{\eps,r}](x) - E_{\eps,r}[f_{\eps, r}](y) |^2 \di \pi \\
& \leq \int_{ (\bt^{d} \times \br^{d})^2} \lVert \eps^{-2} \chi_r *  K * (\chi_r * \r_{f_{\eps,r}} ) \rVert^2_{\text{Lip}} \, |x-y|^2 \di \pi \\
& \leq C \eps^{-4} |\log{r}|^2 (1 + \lVert \chi_r * \r_{f_{\eps,r}} \rVert_{L^{\infty}})^2 \int_{ (\bt^{d} \times \br^{d})^2} |x-y|^2 \di \pi \\
& \leq C \eps^{-4} | \log{r} |^2 M^2 \lambda^{-2} D(t) .
\end{align}

For $I_2$, we first observe that since the $y$-marginal of $\pi_t$ is $\r_{f_{\eps}}(y) \di y$,
\begin{align} \label{RUR_I2def}
I_2 &= \int_{ \bt^{d}} | \eps^{-2} K * (\chi_r * \chi_r * \r_{f_{\eps,r}} - \r_{f_{\eps}}) |^2(y) \r_{f_{\eps}}(y) \di y \\
& \leq M \lVert \eps^{-2} K * (\chi_r * \chi_r * \r_{ f_{\eps,r}} - \r_{f_{\eps}}) \rVert^2_{L^2(\bt^d)}, \label{I2_ineq}
\end{align}
where \eqref{I2_ineq} follows from the fact that $\lVert \r_{f_{\e}} \rVert_{L^{\infty}} \leq M$ by definition of $M$ (recall \eqref{Def_M}).

We then apply the Loeper-type estimate in Lemma \ref{Lem_HKI}:
\be
\lVert \eps^{-2} K * (\chi_r * \chi_r * \r_{f_{\eps,r}} - \r_{f_{\eps}}) \rVert_{L^2(\bt^d)} \leq \eps^{-2} M^{1/2} W_2(\chi_r * \chi_r * \r_{ f_{\eps,r}}, \r_{f_{\eps}}) .
\ee

To control the Wasserstein distance we first apply the triangle inequality:
\be \label{RUR_W_triangle}
W_2(\chi_r * \chi_r * \r_{ f_{\eps,r}}, \r_{ f_{\eps}}) \leq W_2(\chi_r * \chi_r * \r_{ f_{\eps,r}}, \chi_r * \r_{f_{\eps, r}}) + W_2(\chi_r * \r_{f_{\eps,r}}, \r_{ f_{\eps,r}}) + W_2(\r_{f_{\eps,r}}, \r_{ f_{\eps}}) .
\ee

The third term can be controlled by $\lambda^{-1}D^{1/2}$ due to \eqref{Est_Wx_D}. We apply Lemma \ref{W_moll_one} to each of the first two terms. This results in the estimate
\be \label{RUR_W_est}
W_2(\chi_r * \chi_r * \rho_{f_{\eps,r}}, \rho_{f_{\eps}}) \leq 2r + \lambda^{-1} D^{1/2} .
\ee
Thus we get the following estimate for $I_2$:
\begin{align} \label{RUR_I2_final}
I_2 & \leq C \eps^{-4} M^2 (2r + \lambda^{-1} D^{1/2})^2 \\
& \leq C \eps^{-4} M^2 (r^2 + \lambda^{-2}D) .
\end{align}

Substituting these estimates into \eqref{RUR_Dprime} gives us that
\begin{align} \notag
D'(t) & \leq \left [ \lambda + \alpha + \frac{1}{\alpha} C \eps^{-4} M^2(|\log{r}|^2  + 1) \lambda^{-2} \right ]D(t) + \frac{C}{\alpha} \eps^{-4} M^2 r^2 \\ \label{RUR_Dprime_wparam}
& \leq \left [ \lambda + \alpha + \frac{1}{\alpha} C \eps^{-4} M^2 |\log{r}|^2 \lambda^{-2} \right ]D(t) + \frac{C}{\alpha} \eps^{-4} M^2 r^2 .
\end{align}

We will now choose the parameters $\alpha$ and $\lambda$ so as to minimise the constant in the exponential part of our Gronwall estimate; that is, the coefficient of $D$ in \eqref{RUR_Dprime_wparam}. This has a minimum for $(\alpha, \lambda)$ satisfying
\be
\left \{ \begin{array}{ccc}
\alpha &=& C \lambda^{-1} \eps^{-2} M | \log{r} | \\
\lambda &=& \left ( \frac{C \eps^{-4} M^2 | \log{r} |^2}{\alpha} \right )^{1/3} ,
\end{array} \right.
\ee
that is, $\alpha = \lambda = C \eps^{-1} M^{1/2} | \log{r} |^{1/2}$. With this choice of parameters \eqref{RUR_Dprime_wparam} becomes
\be   \label{RUR_Dprime_final}
D'(t) \leq C \eps^{-1} M^{1/2} | \log{r} |^{1/2} D(t) + C \eps^{-3} M^{3/2} r^2 | \log{r} |^{-1/2} .
\ee

Since $D(0) = 0$, the above inequality implies that
\be \label{RUR_D_Gron}
D(t) \leq C \eps^{-3} M^{3/2} r^2 | \log{r} |^{-1/2} e^{C \eps^{-1} M^{1/2} | \log{r} |^{1/2} t} .
\ee

We conclude that
\be \label{RUR_W_Gron}
W_2(f_{\eps, r}(t), f_{\eps}(t)) \leq C \eps^{-3/2} M^{3/4} r | \log{r} |^{-1/4} e^{C \eps^{-1} M^{1/2} | \log{r} |^{1/2} t} .
\ee

Finally, we use this to prove \eqref{RUR_Wconv}. Since $\{f_{0, \e}\}$ are assumed to satisfy Assumption~\ref{Hyp_data}, we may apply Propositions \ref{prop_dens_2d} and \ref{prop_dens_3d} to deduce that
\be
M \leq C \e^{- \zeta d},
\ee
for $\zeta$ defined in \eqref{Def_z:2d}-\eqref{Def_z:3d}. Next, as in the proof of Lemma~\ref{Lem_valid}, we observe that the relation \eqref{r_eps_2} implies that
\be
\e^{-(2 + \zeta d)} \leq C_{T, \beta} |\log{r}|,
\ee
for some $C_{\beta, T}$. Thus
\be
\e^{-1} M^{1/2} \leq C_{\beta, T} |\log{r}|^{1/2} .
\ee

Moreover, by \eqref{growthfactor} and \eqref{Est_growth}, we have
\be
e^{C \eps^{-1} M^{1/2} | \log{r} |^{1/2} t} \leq r^{- \beta} .
\ee

Thus
\be
\sup_{t \in [0,T]} W_2(f_{\eps, r}(t), f_{\eps}(t)) \leq C_T \, r^{1 - \beta} |\log{r}|^{1/2} .
\ee

Since $\beta <1$, the right hand side converges to 0 as $N \to \infty$. This completes the proof.

\end{proof}

\section{Quasi-neutral limit}

Next we perform the quasineutral limit on the mean field equation, i.e. the limit \eqref{Eqn_VP_eps} $\to$ \eqref{Eqn_KIE}. This is the content of Theorem 1 of \cite{HKI15}, recalled below.

\begin{thm} 
\label{multiD:thm1}
Let $\gamma$, $\delta_0$,
and $C_0$ be positive constants, with $\delta_0>1$.
Consider a sequence $(f_{0,\eps})$ of non-negative initial data in $L^1$ satisfying Assumption~\ref{Hyp_data}. For all $\e \in (0,1)$, consider $f_\e(t)$  a global weak solution of $(VP)_\eps$ with initial condition $f_{0,\e}$. Define the filtered distribution function
\begin{equation} 
\widetilde{f}_\e(t,x,v) := f_\e \Big(t,x,v-\frac{1}{i}(d_+(t,x)e^{\frac{it}{\sqrt \e}}-d_-(t,x)e^{-\frac{it}{\sqrt \e}})\Big)
\end{equation}
where $(d_\pm)$ are defined in \eqref{Def_d}-\eqref{Def_d_3}.

There exist $T>0$ and $g(t)$ a weak solution on $[0,T]$ of (KIE) with initial condition $g_0$ such that
$$
\lim_{\e \to 0} \sup_{t \in [0,T]}  W_1(\widetilde{f}_\e(t), g(t)) = 0.
$$

\end{thm}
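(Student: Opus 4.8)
The statement is Theorem~1 of \cite{HKI15}; the plan is to reconstruct its proof by a triangular argument between the filtered rough solution $\widetilde f_\e$, the filtered analytic solution $\widetilde g_\e$ associated to the analytic part $g_{0,\e}$ of the data, and the limit $g$: for $t\in[0,T]$,
\be
W_1(\widetilde f_\e(t),g(t))\le W_1(\widetilde f_\e(t),\widetilde g_\e(t))+W_1(\widetilde g_\e(t),g(t)).
\ee
First I would handle the second term using Grenier's analytic quasineutral limit \cite{Gr96}: by the analyticity-plus-velocity-decay bound of Assumption~\ref{Hyp_data}(iii), $g_{0,\e}$ falls within the scope of Grenier's WKB / multi-fluid expansion, which yields a time $T>0$ independent of $\e$, a solution $g_\e$ of $(VP)_\e$ on $[0,T]$ whose analytic norms — hence its density $\|\r_{g_\e}(t)\|_{L^\infty}$ — are bounded uniformly in $\e$, and a weak solution $g$ of (KIE) with datum $g_0=\lim_{\e\to0}g_{0,\e}$, such that the filtered function $\widetilde g_\e$, obtained by removing the plasma oscillation built from the correctors $d_\pm$ of \eqref{Def_d}--\eqref{Def_d_3}, converges to $g$ in $H^s_x$. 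Converting this $H^s$ convergence into Wasserstein convergence as in \cite{HKI14} gives $\sup_{[0,T]}W_1(\widetilde g_\e(t),g(t))\to0$.

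For the first term the core is a weak--strong stability estimate between the rough solution $f_\e$ and the analytic solution $g_\e$. I would start by controlling the density of $f_\e$: Assumption~\ref{Hyp_data}(i)--(ii) gives uniform $L^\infty$ and energy bounds and a velocity support of size $\e^{-\gamma}$, so a Batt--Rein type propagation of the velocity support yields, as recorded in Propositions~\ref{prop_dens_2d}--\ref{prop_dens_3d},
\be
\sup_{t\in[0,T]}\|\r_{f_\e}(t)\|_{L^\infty(\bt^d)}\le M:=C_T\,\e^{-\zeta d},
\ee
with $\zeta$ as in \eqref{Def_z:2d}--\eqref{Def_z:3d}. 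Coupling $f_\e$ and $g_\e$ by a measure $\pi_t$ transported along the two characteristic flows, I would then run the anisotropic Gronwall argument of \eqref{Est_Dprime_1} on $D(t)=\tfrac12\int\lambda^2|x-y|^2+|v-w|^2\,d\pi_t$: the force difference $E_\e[f_\e](x)-E_\e[g_\e](y)$ splits as $\big(E_\e[f_\e](x)-E_\e[g_\e](x)\big)$, controlled in $L^2(\r_{f_\e}\,dx)$ by $\e^{-2}M\,W_2(\r_{f_\e},\r_{g_\e})$ via the Loeper-type Lemma~\ref{Lem_HKI} and the density bound, plus $\big(E_\e[g_\e](x)-E_\e[g_\e](y)\big)$, which is $\le C\e^{-2}|x-y|$ since $\r_{g_\e}$ is uniformly analytic. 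Using $W_2^2(\r_{f_\e},\r_{g_\e})\le 2\lambda^{-2}D$ and $\int|x-y|^2\,d\pi_t\le2\lambda^{-2}D$ and optimising over the Young parameter and $\lambda$ (namely $\lambda\sim\e^{-1}M^{1/2}$) closes the estimate as $D'\le C\e^{-1}M^{1/2}D$, whence $W_2(f_\e(t),g_\e(t))\lesssim \e^{-1}M^{1/2}e^{C\e^{-1}M^{1/2}t}W_2(f_{0,\e},g_{0,\e})$. With $M=C_T\e^{-\zeta d}$ and $W_2(f_{0,\e},g_{0,\e})\le\varphi(\e)$ from Assumption~\ref{Hyp_data}(iv), the exponent is $\sim\e^{-1-\zeta d/2}$, and the (doubly exponentially small) choice of $\varphi$ in Assumption~\ref{Hyp_data}(iv) forces $\sup_{[0,T]}W_2(f_\e(t),g_\e(t))\to0$ as $\e\to0$.

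To conclude I would pass back to the filtered measures: since the correctors satisfy $\|\nabla_x\mc{R}_\e\|_{L^\infty}\le C_T$, Lemma~\ref{Lem_filt} gives $W_1(\widetilde f_\e(t),\widetilde g_\e(t))\le C_T\,W_1(f_\e(t),g_\e(t))\le C_T\,W_2(f_\e(t),g_\e(t))\to0$, and combining with the first step yields $\sup_{[0,T]}W_1(\widetilde f_\e(t),g(t))\to0$, as claimed. \textbf{The main obstacle} is precisely the interplay between the density bound and the weak--strong estimate: $\|\r_{f_\e}\|_{L^\infty}$ is controlled only polynomially in $\e^{-1}$, so the stability constant blows up like $\exp(C\e^{-1-\zeta d/2}t)$, and reconciling this blow-up with an admissible initial perturbation is what forces $\varphi$ to be exponentially (indeed doubly exponentially) small — the same reason the quasineutral limit genuinely fails for polynomially small perturbations. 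Carefully tracking every power of $\e$ through the Batt--Rein support propagation, the anisotropic Gronwall optimisation, and Grenier's analytic construction, and checking that each error is beaten by $\varphi$, is the technical heart of the argument.
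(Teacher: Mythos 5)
The paper does not actually prove this statement: Theorem~\ref{multiD:thm1} is imported verbatim as Theorem~1 of \cite{HKI15}, and the only role it plays here is as a black box in the triangular argument for Theorem~\ref{thm_main}. What you have written is therefore a reconstruction of the proof in \cite{HKI15}, and it follows the same strategy as that reference: Grenier's uniformly analytic construction \cite{Gr96} giving $g_\e$ on a uniform time interval together with the filtered convergence $\widetilde g_\e \to g$ (converted to $W_1$ as in \cite{HKI14}), Batt--Rein type propagation of the velocity support giving $\|\r_{f_\e}\|_{L^\infty}\le C_T\e^{-\zeta d}$ (Propositions~\ref{prop_dens_2d}--\ref{prop_dens_3d}), a Loeper-type weak--strong $W_2$ stability estimate between $f_\e$ and the analytic solution $g_\e$, and the filtering Lemma~\ref{Lem_filt}. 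The one place you deviate is in how the stability estimate is closed: you run the anisotropic-metric optimisation of this paper's mean-field section and obtain a linear Gronwall rate $C\e^{-1}M^{1/2}\sim\e^{-1-\zeta d/2}$, whereas the estimate in \cite{HKI15} carries an additional logarithmic structure and yields a growth of the form $\exp\bigl(C\,T\,\e^{-2-\zeta d}\bigr)$ acting on the initial perturbation; it is this worse exponent $2+\zeta d$ that explains the precise doubly-exponentially small shape of $\varphi$ in Assumption~\ref{Hyp_data}(iv) (note the exponents $2(1+\max(\delta,\gamma))$ and $2+\max(38,3\gamma)$ match $2+\zeta d$ exactly). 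Your variant is legitimate because the term usually responsible for the logarithm -- comparing the same field at two points -- is here generated by the analytic solution, whose force is genuinely Lipschitz with constant $C\e^{-2}$, so the linear Gronwall closes; and since your growth factor is smaller than the one the assumption is calibrated to beat, the hypothesis on $\varphi$ is more than sufficient. So the sketch is correct in outline and consistent with (indeed slightly sharper than) the cited proof; the only caveats are bookkeeping ones: the $L^\infty$ bound on $\r_{g_\e}$ uniform in $\e$ and the Lipschitz bound on $E_\e[g_\e]$ must be extracted from Grenier's analytic estimates in dimensions $2,3$, and the oscillation frequency written in the statement ($e^{it/\sqrt\e}$, inherited from the scaling of \cite{HKI15}) should be reconciled with the corrector \eqref{corrector} of this paper, which oscillates at frequency $\e^{-1}$.
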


\section{Filtering and triangular argument}

Finally, we combine the previous results to complete the proof of Theorem \ref{thm_main}. We use $\tilde{\mu}$ to denote the distribution produced by filtering $\mu$ using the correctors defined in \eqref{corrector}, following Definition~\ref{Def_filt}.

\begin{proof}[Proof of Theorem \ref{thm_main}]

First we apply the triangle inequality for the Wasserstein distance to get
\be \label{N_KIE_triangle}
W_1(\tilde{\mu}^N_{\eps,r}, g) \leq W_1(\tilde{\mu}^N_{\eps,r}, \tilde{f}_{\eps}) + W_1({\tilde{f}_{\eps}, g})
\ee

We begin by estimating $W_1(\tilde{\mu}^N_{\eps,r}, \tilde{f}_{\eps})$. By Lemma \ref{Lem_filt} and the fact that $W_1 \leq W_2$,
\begin{align}
\sup_{[0,T]} W_1(\tilde{\mu}^N_{\eps,r}, \tilde{f}_{\eps}) & \leq C_T \, \sup_{[0,T]} W_1(\mu^N_{\eps,r}, f_{\eps}) \\
& \leq C_T \, \sup_{[0,T]} W_2(\mu^N_{\eps,r}, f_{\eps}) \\
& \leq C_T \, \left ( \sup_{[0,T]} W_2(\mu^N_{\eps,r}, f_{\eps, r} ) + \sup_{[0,T]} W_2( f_{\eps, r}, f_{\eps}) \right ) .
\end{align}

Under conditions \eqref{Hyp_init_conv} and \eqref{r_eps_2}, the first term converges to 0 by Proposition~\ref{Prop_MFL}. The second term converges by Proposition~\ref{Prop_RUR}. Hence
\be
\lim_{N \to \infty} \sup_{[0,T]} W_1(\tilde{\mu}^N_{\eps,r}, \tilde{f}_{\eps}) = 0 .
\ee

For $W_1({\tilde{f}_{\eps}, g})$, we apply Theorem \ref{multiD:thm1} to get that
\be
\lim_{N \to \infty} \sup_{[0,T]} W_1({\tilde{f}_{\eps}, g}) = 0 .
\ee

Therefore
\be
\lim_{N \to \infty} \sup_{[0,T]} W_1(\tilde{\mu}^N_{\eps,r}, g) = 0.
\ee

\end{proof}

\section{Convergence of initial data} \label{sec:conc}

It remains for us to show that conditions \eqref{init_conv_1d} and \eqref{Hyp_init_conv:thm} are reasonable ones to impose on the initial configurations. We would like to exhibit a large class of configurations for which these conditions hold. In this section we show that for certain ranges of the parameters $\e$ and $r$, these conditions are in fact `typical’ - meaning that the condition holds with high probability when the initial configurations $[(x_i(0), v_i(0))]_{i=1}^N$ are chosen by taking $N$ independent samples from the initial distribution $f_{0, \e}$. This can be thought of as considering the behaviour of $N$ typical particles. In the rest of this section, we will consider only the case where the initial configurations are chosen in this way. Thus $\mu^N_{\e}(0)$ will always denote the initial empirical measure constructed with this method. For this choice of initial configurations we will prove that \eqref{Hyp_init_conv} does indeed hold with probability 1, provided that $\e$ and $r$ converge at a suitable rate with respect to $N$. We give this rate explicitly.

The fact that an empirical measure constructed by taking independent samples from a given probability distribution approximates that same distribution as the number of samples tends to infinity is a well-known result in statistics. The main idea of this section is to use a concentration inequality to quantify the rate at which this convergence occurs in the Wasserstein. The concentration inequalities we use are due to Fournier-Guillin \cite{FG}. This approach was used in \cite{Laz15}; however in our case we use a slightly different version of the concentration inequalities in order to exploit our choice of compactly supported data with a controlled growth rate in Assumptions \ref{Hyp_data_1d} and \ref{Hyp_data}. 

\begin{prop}[One dimension] \label{Prop_typ_1d}
Let $d=1$. Let $f_{0,\e}$ satisfy Assumption~\ref{Hyp_data_1d} and the same support assumption \ref{Hyp_data_1d}(ii) as $g_{0,\e}$, that is, there exists $R > 0$ such that $f_{0,\e} = 0$ for all $|v| > R$ and all $x, \e$. Fix a constant $C_2 > 0$ and suppose that $\e = \e_N$ satisfies
\be \label{epsrate:1d}
\e \geq \frac{A}{\log{N}}
\ee
for some $A > 2 C_2$. Let $((x_i^{(\e)}, v_i^{(\e)}))^N_{i=1}$ be chosen by taking $N$ independent samples from $f_{0,\e}$. Let $\mu^N_\e(0)$ denote the associated empirical measure. Then with probability 1
\be
\lim_{\e \to 0} \e^{-1} e^{\e^{-1} C_2} W_1(\mu^N_\e(0), f_{0,\e}) = 0 .
\ee
\end{prop}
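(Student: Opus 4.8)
The plan is to combine the first Borel--Cantelli lemma with a concentration estimate for the Wasserstein distance between $f_{0,\e}$ and the empirical measure of $N$ independent draws from it. The decisive structural point is that, under the support hypothesis imposed here (Assumption~\ref{Hyp_data_1d}(ii) applied to $f_{0,\e}$ itself, together with the compactness of $\bt^1$), all the laws $f_{0,\e}$ are supported in one and the same compact set $\bt^1 \times \overline{B_R}$, which we regard as a compact subset of $\mb{R}^{2}$. Consequently the Fournier--Guillin concentration inequalities \cite{FG} apply with constants that do not depend on $\e$. Since the ambient dimension is $2d = 2$ and we use $W_1$, we are in the borderline regime of \cite{FG}; in any case, for compactly supported data one obtains constants $c, C > 0$ depending only on $R$ (and on a fixed auxiliary exponent $\kappa' > 0$, which may be taken arbitrarily small, absorbing the logarithmic loss of the critical case) such that, for every $N$ and every $\delta \in (0,1)$,
\be
\PP\big( W_1(\mu^N_\e(0), f_{0,\e}) \geq \delta \big) \;\leq\; C \exp\!\big( - c\, N\, \delta^{\,2+\kappa'} \big) .
\ee

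Next I will choose the threshold $\delta_N$ in the window left open by the assumption $A > 2C_2$. Since $A > 2C_2$ we have $C_2/A < 1/2$, so we may fix $\kappa \in \big(0, \tfrac12 - \tfrac{C_2}{A}\big)$ and then $\kappa' \in (0, 4\kappa)$, and put $\delta_N := N^{-1/2+\kappa}$. With this choice, $N \delta_N^{\,2+\kappa'} = N^{\theta}$ with $\theta := 1 + (\kappa - \tfrac12)(2+\kappa')$, and $\theta > 0$ because $\kappa' < 4\kappa$; hence the concentration bound gives $\PP\big( W_1(\mu^N_\e(0), f_{0,\e}) \geq \delta_N \big) \leq C\exp(-c N^{\theta})$, which is summable in $N$. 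By Borel--Cantelli, on a set of full probability there is a random index $N_0$ such that $W_1(\mu^N_\e(0), f_{0,\e}) < N^{-1/2+\kappa}$ for all $N \geq N_0$.

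To conclude, I will use that $\e \mapsto \e^{-1} e^{C_2/\e}$ is decreasing on $(0,\infty)$, so that the hypothesis $\e = \e_N \geq A/\log N$ gives $\e^{-1} e^{\e^{-1} C_2} \leq \tfrac{\log N}{A}\, N^{C_2/A}$. On the full-probability event above, for $N \geq N_0$,
\be
\e^{-1} e^{\e^{-1} C_2}\, W_1(\mu^N_\e(0), f_{0,\e}) \;\leq\; \frac{\log N}{A}\, N^{\,C_2/A - 1/2 + \kappa} \;\longrightarrow\; 0 \qquad (N \to \infty) ,
\ee
since $C_2/A - 1/2 + \kappa < 0$. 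As $\e = \e_N \to 0$ corresponds to $N \to \infty$, this is precisely the asserted almost sure limit.

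The only delicate point is the arithmetic of these exponents. The troublesome factor $e^{C_2/\e}$ becomes, after inserting $\e \geq A/\log N$, merely the polynomial factor $N^{C_2/A}$ with $C_2/A < 1/2$, and the hypothesis $A > 2C_2$ is exactly what allows a threshold $\delta_N = N^{-1/2+\kappa}$ that is simultaneously (i) only a polynomial factor above the intrinsic rate $N^{-1/2}$ (so that, even after the logarithmic loss coming from the borderline dimension $2d = 2$, the concentration probabilities remain summable) and (ii) small enough that $N^{C_2/A}\delta_N \to 0$. There is no deeper obstacle: none of the mean-field or quasineutral analysis enters here, and everything reduces to quantitative empirical-measure convergence for compactly supported data.
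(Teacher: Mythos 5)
Your proposal is correct and follows essentially the same route as the paper: the Fournier--Guillin concentration inequality for compactly supported measures in the critical case $p=1$, $m=2$, with the logarithmic factor absorbed into an arbitrarily small polynomial loss, combined with Borel--Cantelli and the exponent bookkeeping made possible by $A>2C_2$. The only (cosmetic) difference is that you fix the deterministic threshold $N^{-1/2+\kappa}$ for $W_1$ alone and bound the prefactor $\e^{-1}e^{C_2/\e}\le \frac{\log N}{A}N^{C_2/A}$ afterwards, whereas the paper places the full product inside the probability; the scaling in $v$ needed to reduce the support to $[-1,1]^2$ is handled explicitly in the paper via a scaling lemma, which your remark that the constants depend only on $R$ implicitly covers.
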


\begin{prop} \label{Prop_typ_high}
Let $d=2,3$. Let $f_{0,\e}$ satisfy Assumption~\ref{Hyp_data}. Let $r = r_N$ be chosen such that $r_N \geq N^{- \frac{1}{d(d+2)} + \alpha}$ for some $\alpha > 0$. Then there exists a constants $\eta > 0$, $C > 0$ such that with probability 1, for all $N$ sufficiently large,
\be
\frac{W_2^2(\mu^N_{\eps}(0), f_{0, \eps})}{\e^{- 2 \gamma} r^{d+2+\eta}} \leq C .
\ee
\end{prop}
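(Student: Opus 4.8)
The plan is to read this as a quantitative law of large numbers for the empirical measure $\mu^N_\e(0)$ of $N$ independent draws from $f_{0,\e}$, and to apply the Wasserstein concentration inequalities of Fournier--Guillin \cite{FG}. First I would use Assumption~\ref{Hyp_data}(ii): for $\e<1$ the law $f_{0,\e}$ is supported in $\bt^d\times\{|v|\le\e^{-\gamma}\}$, a set of diameter $\lesssim\e^{-\gamma}$. Rescaling every phase-space coordinate by $\e^{\gamma}$ turns $f_{0,\e}$ into a probability measure $\hat f_{0,\e}$ supported in a fixed ball of $\br^{2d}$, and the corresponding rescaled empirical measure $\hat\mu^N$ (which is the empirical measure of $N$ i.i.d.\ draws from $\hat f_{0,\e}$) satisfies the exact scaling relation $W_2^2(\mu^N_\e(0),f_{0,\e})=\e^{-2\gamma}\,W_2^2(\hat\mu^N,\hat f_{0,\e})$. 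Applying the Fournier--Guillin deviation bound to the compactly supported law $\hat f_{0,\e}$ in dimension $m=2d$ (all of whose exponential moments are finite) then gives, for $d=2,3$, an estimate of the form
\[
\mathbb{P}\Big[W_2^2(\mu^N_\e(0),f_{0,\e})\ge s\Big]\ \le\ C\exp\!\big(-c\,N\,(\e^{2\gamma}s)^{d/2}\big)\qquad\text{for }0<\e^{2\gamma}s\le1,
\]
with an extra logarithmic correction inside the exponent when $d=2$ (the borderline case $2d=4$). The essential point is that the size $\e^{-\gamma}$ of the support enters only through the prefactor $\e^{-2\gamma}$, which is exactly the normalisation appearing in the statement.

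Next I would fix $\eta$ and run a Borel--Cantelli argument; we may assume $r=r_N<1$ for all large $N$, since when $r_N\ge1$ the claimed bound is immediate from $W_2^2(\mu^N_\e(0),f_{0,\e})\le\mathrm{diam}(\mathrm{supp}\,f_{0,\e})^2\lesssim\e^{-2\gamma}\le\e^{-2\gamma}r^{d+2+\eta}$. Take $s=C\,\e^{-2\gamma}r^{d+2+\eta}$; then $\e^{2\gamma}s=Cr^{d+2+\eta}\le1$, so the bound above reads $\mathbb{P}[\,\cdot\,]\le C\exp(-c'\,N\,r^{(d+2+\eta)d/2})$, with no residual $\e$-dependence. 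Inserting the hypothesis $r\ge N^{-\frac1{d(d+2)}+\alpha}$ and simplifying exponents gives
\[
N\,r^{(d+2+\eta)d/2}\ \ge\ N^{\,\frac12-\frac{\eta}{2(d+2)}+\frac{\alpha d}{2}(d+2+\eta)} ,
\]
and as $\eta\to0^{+}$ the exponent tends to $\tfrac12+\tfrac{\alpha d(d+2)}{2}>0$; hence choosing $\eta=\eta(\alpha,d)>0$ small enough makes it at least some $\delta_1>0$ for all large $N$ (the $d=2$ logarithm only costs a further logarithmic factor). Therefore
\[
\sum_{N}\mathbb{P}\Big[W_2^2(\mu^N_\e(0),f_{0,\e})\ge C\,\e^{-2\gamma}r^{d+2+\eta}\Big]\ \le\ \sum_N C\exp\!\big(-c'N^{\delta_1}\big)\ <\ \infty ,
\]
so by the first Borel--Cantelli lemma, with probability $1$, $W_2^2(\mu^N_\e(0),f_{0,\e})<C\,\e^{-2\gamma}r^{d+2+\eta}$ for all $N$ sufficiently large; the events at different $N$ may be realised on the product of the sampling probability spaces, and this first half of Borel--Cantelli needs no independence among them. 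This is precisely the asserted bound with these values of $C$ and $\eta$.

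The main obstacle is the first step: extracting from \cite{FG} the concentration inequality in a form where the radius $\e^{-\gamma}$ of the velocity support enters only as the multiplicative factor $\e^{-2\gamma}$, and not, for instance, inside the exponent, and dealing cleanly with the borderline dimension $d=2$. This is the ``slightly different version of the concentration inequalities'' alluded to in the introduction: one wants to use the compactness of the support together with its controlled polynomial growth in $\e^{-1}$, so that after the uniform rescaling the problem reduces to a single fixed compactly supported law and all of the $\e$-dependence is explicit and cancels against the denominator. Once that is arranged, the bookkeeping with the exponents and the Borel--Cantelli step are entirely routine.
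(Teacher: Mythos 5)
Your approach is the same as the paper's: rescale the compact velocity support (of radius $\e^{-\gamma}$) so that all the initial laws live in the fixed cube $[-1,1]^{2d}$, apply the Fournier--Guillin concentration inequality there (so that the support size enters only through the prefactor $\e^{-2\gamma}$, which cancels against the normalisation in the statement), and conclude by Borel--Cantelli; the paper scales only the velocity variable and uses the one-sided inequality $W_p(\nu_1,\nu_2)\le R\,W_p(\mathcal{S}_R[\nu_1],\mathcal{S}_R[\nu_2])$ rather than your exact two-variable scaling identity, but that difference is immaterial.

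There is, however, a quantitative slip in the deviation bound you quote. Fournier--Guillin (Theorem~\ref{thm_FG}, stated for $W_p^p$) in dimension $m=2d$ with $p=2$ gives, for the rescaled threshold $x=\e^{2\gamma}s\le1$,
\begin{equation}
\mathbb{P}\big[W_2^2(\mu^N_\e(0),f_{0,\e})\ge s\big]\ \le\ C\exp\big(-cN\,x^{m/p}\big)=C\exp\big(-cN\,x^{d}\big)\quad (d=3),
\end{equation}
and the borderline form $C\exp\big(-cN(x/\log(2+1/x))^2\big)$ when $d=2$ (since then $p=m/2$), not $C\exp(-cN\,x^{d/2})$ as you wrote. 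Your version amounts to applying the $W_p^p$ tail bound to $W_p$ itself, and it is in fact false in general: at the typical scale $x\sim N^{-1/d}$ (the order of $\mathbb{E}[W_2^2]$ for absolutely continuous laws in dimension $2d>4$) the exceedance probability is of order one, whereas your bound would give $\exp(-cN^{1/2})$. The error is nonetheless harmless for the proposition: redoing your exponent bookkeeping with the correct exponent $d$ gives, for $d=3$, $N r^{d(d+2+\eta)}\ge N^{\alpha d(d+2)+\eta(\alpha d-\frac{1}{d+2})}$, which is still a positive power of $N$ for $\eta>0$ small because $\alpha>0$ (this is exactly what the hypothesis $r\ge N^{-\frac{1}{d(d+2)}+\alpha}$ is calibrated for), and similarly for $d=2$ the log-corrected bound yields $N^{8\alpha+\eta(2\alpha-\frac14)}$ up to a squared logarithm; these are precisely the paper's exponents, and summability plus Borel--Cantelli then go through as you describe. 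So correct the quoted concentration inequality and the corresponding arithmetic, and the argument coincides with the paper's proof.
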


Theorems \ref{thm_typ_1d} and \ref{thm_typ} follow immediately from the above propositions combined with Theorems \ref{thm_main_1d} and \ref{thm_main}.

\subsection{Concentration inequality}

The central result we will make use of is a concentration inequality for compactly supported measures in Wasserstein sense from Fournier-Guillin \cite{FG}.

\begin{thm} 
 \label{thm_FG} Let $\nu$ be a measure supported on $[-1, 1]^m$, and let $\nu^N$ denote the empirical measure of $N$ independent samples from $\nu$. Then there exist constants $c$, $C$ and $\kappa$ depending on $m$ and $p$ only such that for all $x > 0$,
\be \label{conc_cpct}
\PP \left( W_p^p (\nu, \nu^N) \geq \kappa x \right ) \leq C \mathbbm{1}_{\{ x \leq 1 \}} \left \{ \begin{array}{cc}
\exp{(-cN x^2)} & p > m/2 \\
\exp{(-cN (\frac{x}{\log{(2 + 1/x)}})^2)} & p = m/2 \\
\exp{(-cN x^{m/p})} & p \in (0, m/2) .
\end{array} \right.
\ee
\end{thm}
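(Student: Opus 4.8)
The plan is to follow the dyadic-partitioning strategy of Fournier--Guillin \cite{FG} (building on earlier constructions of Boissard--Le Gouic and others). First I would fix the standard dyadic decomposition of the cube: at scale $\ell \geq 0$, write $[-1,1]^m$ as a disjoint union of $2^{m\ell}$ sub-cubes $(Q^\ell_j)_j$ of side $2^{1-\ell}$. The structural heart of the argument is a deterministic multiscale bound for the transport cost between any two probability measures $\mu,\rho$ on the cube, of the shape
\be
W_p^p(\mu, \rho) \leq C_{m,p} \sum_{\ell \geq 0} 2^{-p\ell} \sum_j \left| \mu(Q^\ell_j) - \rho(Q^\ell_j) \right| ,
\ee
proved by constructing an explicit coupling which, between consecutive scales, rearranges mass only inside a single level-$\ell$ cube and therefore pays a cost of order $(2^{-\ell})^p$ per unit of transported mass; boundedness of the support is exactly what makes the $\ell = 0$ term finite. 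Applying this with $\mu = \nu$, $\rho = \nu^N$ and truncating the sum at a level $L$ to be chosen, the tail $\sum_{\ell > L} 2^{-p\ell}(\cdots) \leq C\, 2^{-pL}$ is controlled with no randomness.

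The probabilistic input is a concentration estimate at each fixed scale. For the finite partition at level $\ell$ into $n_\ell := 2^{m\ell}$ cells, the quantity $S_\ell := \sum_j |\nu(Q^\ell_j) - \nu^N(Q^\ell_j)|$ changes by at most $2/N$ when a single sample is moved, so McDiarmid's bounded-differences inequality gives $\PP(S_\ell \geq \mathbb{E} S_\ell + t) \leq \exp(-N t^2 / 2)$; and by the variance of the binomial together with Cauchy--Schwarz, $\mathbb{E} S_\ell \leq \sum_j (\nu(Q^\ell_j)/N)^{1/2} \leq (n_\ell / N)^{1/2}$.

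Next I would patch the scales together. Choosing per-level thresholds $t_\ell = x\, 2^{-p\ell} a_\ell$ with weights $a_\ell$ summable against $2^{-p\ell}$, a union bound over $\ell \leq L$ shows that, with probability at least $1 - \sum_{\ell \leq L} \exp(-cN x^2 4^{-p\ell} a_\ell^2)$, one has $W_p^p(\nu, \nu^N) \lesssim x + 2^{-pL} + \sum_{\ell \leq L} 2^{-p\ell}(n_\ell/N)^{1/2}$. The deterministic number-of-cells sum $\sum_{\ell \leq L} 2^{(m/2 - p)\ell}$ is what splits the conclusion into the three regimes. For $p > m/2$ it converges, contributing $O(N^{-1/2})$, which is dominated by $x$ once $x \gtrsim N^{-1/2}$; taking $L \sim p^{-1}\log_2(1/x)$ makes the $2^{-pL}$ error of order $x$ and the worst term of the union bound is the coarsest one, giving $\exp(-cNx^2)$. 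For $p = m/2$ the sum grows linearly in $L$, which after $L \sim \log_2(1/x)$ produces the $\log(2 + 1/x)$ correction in the exponent. For $p < m/2$ the sum is dominated by its last term $2^{(m/2-p)L}$; balancing the truncation error $2^{-pL}$ against the requirement that the deterministic term be $\lesssim x$ forces $2^{-L} \sim x^{1/p}$, and the binding constraint becomes $N x^{m/p} \gtrsim 1$, i.e.\ $\exp(-cNx^{m/p})$. The indicator $\mathbbm{1}_{\{x \leq 1\}}$ simply reflects that $W_p^p(\nu,\nu^N) \leq (2\sqrt m)^p$ always, so the bound is only informative for small $x$.

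The main obstacle is the bookkeeping in the degenerate regime $p \leq m/2$: the truncation level $L$, the per-scale thresholds $t_\ell$, and the absorption of the deterministic ``number-of-cells'' term must be tuned simultaneously so that the $O(L)$-fold union bound does not spoil the exponential rate, and the borderline case $p = m/2$ is forced to carry the extra logarithm precisely because that deterministic sum is marginally divergent. Since the statement is quoted verbatim from \cite[Theorem~1]{FG}, in the paper itself I would simply cite it; the above is how one would reconstruct a self-contained proof.
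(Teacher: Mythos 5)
The paper offers no proof of this statement: it is quoted verbatim from Fournier--Guillin \cite{FG}, so simply citing that theorem---as you ultimately propose---is exactly what the paper does. Your sketch is a faithful outline of the argument in \cite{FG} (dyadic-partition bound on $W_p^p$, per-scale concentration, union bound, with the three regimes produced by $\sum_{\ell}2^{(m/2-p)\ell}$); the only slip is in the threshold bookkeeping, since with $t_\ell = x\,2^{-p\ell}a_\ell$ the worst union-bound term would sit at the finest scale, and one should instead let the per-level thresholds grow geometrically in $\ell$ (e.g. $t_\ell \sim x\,2^{p\ell/2}$, which still keeps $\sum_\ell 2^{-p\ell}t_\ell \lesssim x$) so that the coarsest scale dominates and yields $\exp(-cNx^2)$ as you assert.
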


We can use this result to derive a rate of convergence by choosing $x$ dependent on $N$. However, while we have assumed that the initial measures we work with are compactly supported, under Assumptions \ref{Hyp_data_1d} and \ref{Hyp_data} the support may be larger than $[-1,1]^{2d}$. Thus the result above does not immediately cover our case. In the next section we use a scaling argument to derive a concentration inequality for measures with varying support.

\subsection{Scaling} \label{scaling}

For our purposes, we want to consider $\nu = f_{0,\eps} \di x \di v$, which has support contained in $\bt^d \times B_{R}$ for some $R>0$. In order to apply the above estimate, we first rescale the velocity variable in order to work with measures supported in $[-1,1]^{2d}$.

\begin{defi} \label{Def_scale}
Let $\nu$ be a measure on $\bt^d \times \br^d$. We define a scaled measure $\s_R [\nu]$ such that for any $X \in \mc{B}(\bt^d)$ and $V \in \mc{B}(\br^d)$,
\be
\s_R[\nu] (X \times V) = \nu(X \times R V) .
\ee
Similarly, let $\nu_1$ and $\nu_2$ be measures on $\bt^d \times \br^d$ and let $\pi \in \Pi(\nu_1, \nu_2)$. Then let $\s_R^{(2)}[\pi]$ be defined via
\be
\s_R^{(2)}[\pi] (X_1 \times V_1 \times X_2 \times V_2) = \pi(X_1 \times R V_1 \times X_2 \times R V_2) .
\ee
\end{defi}
\begin{rks}
\begin{enumerate}[(i)]
\item Note that $\s_R^{(2)}[\pi] \in \Pi(\s_R[\nu_1], \s_R[\nu_2])$ .
\item $\s_R^{(2)}$ gives a bijection between $\Pi(\nu_1, \nu_2)$ and $\Pi(\s_R[\nu_1], \s_R[\nu_2])$.
\end{enumerate}
\end{rks}

We examine the effect of this scaling on the Wasserstein distance.
\begin{lem} \label{lem_scale}
Let $\nu_1$, $\nu_2$ be measures on $\bt^d \times \br^d$. Then
\be \label{Est_W_scale}
W_p(\nu_1, \nu_2) \leq R W_p(\s_R[\nu_1], \s_R[\nu_2]) . 
\ee
\end{lem}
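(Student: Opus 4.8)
The plan is to compare transport plans for the two sides via the bijection $\s_R^{(2)}$ from Definition~\ref{Def_scale}, exploiting that the rescaling acts only on the velocity block. Throughout I would take $R \ge 1$, which is the case in our application (there $R$ is the radius of the velocity support), and is in fact necessary: for $R<1$ the inequality \eqref{Est_W_scale} fails, as one sees by taking $\nu_1,\nu_2$ to be Dirac masses differing only in spatial position, since then both sides of \eqref{Est_W_scale} feature the same spatial distance while the factor $R$ shrinks the right-hand side.

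First I would fix an arbitrary coupling $\tilde{\pi} \in \Pi(\s_R[\nu_1], \s_R[\nu_2])$. By part (ii) of the Remark following Definition~\ref{Def_scale}, there is a unique $\pi \in \Pi(\nu_1, \nu_2)$ with $\s_R^{(2)}[\pi] = \tilde{\pi}$. Since $\s_R^{(2)}[\pi]$ is exactly the image of $\pi$ under the map $(x,v,y,w) \mapsto (x, v/R, y, w/R)$, the change-of-variables formula gives, for the cost defining $W_p$ in \eqref{Def_W} on $\bt^d \times \br^d$,
$$
\int \big( |x-y|^2 + |v'-w'|^2 \big)^{p/2} \, \di \tilde{\pi}(x,v',y,w') = \int \big( |x-y|^2 + R^{-2}|v-w|^2 \big)^{p/2} \, \di \pi(x,v,y,w) .
$$

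Next I would invoke the elementary inequality $a + b \le R^2 (a + R^{-2} b)$, valid for $a,b \ge 0$ and $R \ge 1$, in the form $(a+b)^{p/2} \le R^p (a + R^{-2}b)^{p/2}$. Applying it pointwise in the integral above, and using that $\pi$ is a coupling of $\nu_1$ and $\nu_2$, one gets
$$
W_p^p(\nu_1, \nu_2) \le \int \big( |x-y|^2 + |v-w|^2 \big)^{p/2} \, \di \pi \le R^p \int \big( |x-y|^2 + R^{-2}|v-w|^2 \big)^{p/2} \, \di \pi = R^p \int \big( |x-y|^2 + |v'-w'|^2 \big)^{p/2} \, \di \tilde{\pi} .
$$
Taking the infimum over $\tilde{\pi} \in \Pi(\s_R[\nu_1], \s_R[\nu_2])$ yields $W_p^p(\nu_1, \nu_2) \le R^p W_p^p(\s_R[\nu_1], \s_R[\nu_2])$, which is \eqref{Est_W_scale}. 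There is no genuine obstacle here beyond bookkeeping; the only point that needs care is that the metric underlying $W_p$ on $\bt^d \times \br^d$ is the full Euclidean distance on the product, so the velocity rescaling only touches the velocity coordinates and the inequality $(a+b)^{p/2}\le R^p(a+R^{-2}b)^{p/2}$ is precisely what absorbs the spatial term into the factor $R$. An equivalent, slightly slicker route is to note that the inverse scaling map $S(x,v):=(x,Rv)$ satisfies $\nu_i = S_{\#}(\s_R[\nu_i])$ and is $R$-Lipschitz on $\bt^d\times\br^d$ for $R\ge 1$, so that the standard contraction of Wasserstein distances under a Lipschitz pushforward gives \eqref{Est_W_scale} at once.
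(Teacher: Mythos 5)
Your proof is correct and follows essentially the same route as the paper's: transfer a coupling through the bijection $\s_R^{(2)}$, bound the transport cost pointwise by a factor $R^p$, and take the infimum over couplings. The only (welcome) addition is that you make explicit the hypothesis $R \ge 1$, which the paper's analogous step $|x-y|^p + R^p|v-w|^p \le R^p\(|x-y|^p + |v-w|^p\)$ also uses silently.
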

\begin{proof}
Observe that for any $\pi \in \Pi(\nu_1, \nu_2)$,
\begin{align}
\int_{(\bt^d \times \br^d)^2} |x-y|^p + |v-w|^p \di \pi & = \int_{(\bt^d \times \br^d)^2} |x-y|^p + R^p |v-w|^p \, \di \s_R^{(2)}[\pi] \\
& \leq R^p \int_{(\bt^d \times \br^d)^2} |x-y|^p + |v-w|^p \, \di \s_R^{(2)}[\pi] 
\end{align}

Since $\s_R^{(2)}$ is a bijection, taking infimum over $\pi$ yields
\be
W_p(\nu_1, \nu_2) \leq R W_p(\s_R[\nu_1], \s_R[\nu_2]) . 
\ee

\end{proof}

\subsection{Typicality in one dimension}

\begin{proof}[Proof of Proposition~\ref{Prop_typ_1d}]

We will show that there exists $\alpha > 0$ such that with probability 1
\be
\e^{-1} e^{\e^{-1} C_2} W_1(\mu^N_\e(0), f_{0,\e}) \leq C N^{- \alpha}
\ee
for some $C$ and all sufficiently large $N$. This will follow from the Borel-Cantelli lemma provided that we can show that
\be
\sum_{N=1}^{\infty} \PP (\e^{-1} e^{\e^{-1} C_2} W_1(\mu^N_\e(0), f_{0,\e}) \geq C N^{- \alpha}) < \infty .
\ee
Note that in these formulas $\e=\e_N$ depends on $N$ and it satisfies \eqref{epsrate:1d}.
To do this we estimate 
\be
P_n : = \PP ( \e^{-1} e^{\e^{-1} C_2} W_1(\mu^N_\e(0), f_{0,\e}) \geq C N^{- \alpha} )
\ee
using Theorem~\ref{thm_FG} combined with the scaling argument from section~\ref{scaling}.

By assumption, all the $f_{0,\e}$ have compact support contained in $[-1,1] \times B_R$ for some fixed $R > 0$ independent of $\e$. Thus $\s_R[f_{0,\e}]$ (abusing notation) gives a density function supported in $[-1,1]^2$. Observe that Lemma~\ref{lem_scale} implies that 
\be
P_N = \PP ( \e^{-1} e^{\e^{-1} C_2} W_1(\mu^N_\e(0), f_{0,\e}) \geq C N^{- \alpha} ) \leq \PP ( \e^{-1} e^{\e^{-1} C_2} R W_1(\s_R[\mu^N_\e(0)], \s_R[f_{0,\e}]) \geq C N^{- \alpha} ) .
\ee
Observe that $\s_R[\mu^N_\e(0)]$ is a random measure with the same law as the random measure constructed by taking the empirical measure of $N$ independent samples from $\s_R[f_{0,\e}]$. Hence we may apply Theorem~\ref{thm_FG} with $m=2$ and $p=1$ to the right hand side above. Choosing $C = \kappa R$, where $\kappa$ is the constant defined in the statement of Theorem~\ref{thm_FG}, we obtain
\be
P_N \leq \exp{\left (-cN \left (\frac{\e e^{- \e^{-1} C_2} N^{- \alpha}}{\log{(2 + \e^{-1} e^{ \e^{-1} C_2} N^{\alpha})}} \right )^2 \right)} .
\ee

For any choice of $\xi > 0$, there exists a constant $C = C_\xi$ such that
\be
\log{(2 + x)} \leq C x^\xi .
\ee
Hence
\be
P_N \leq \exp{ \left (- C N \left ( \e e^{- \e^{-1} C_2} N^{- \alpha} \right )^{2(1 + \xi)} \right)} .
\ee

Now observe that \eqref{epsrate:1d} implies that
\be
e^{- \e^{-1} C_2} \geq N^{- C_2/A} .
\ee
Moreover for any $\eta > 0$ there exists $C = C_{\eta,C_2}$ such that
\be
\e \geq C e^{- C_2 \e^{-1} \eta} .
\ee
Hence
\be
\left ( \e e^{- \e^{-1} C_2} N^{- \alpha} \right )^{2(1 + \xi)} \geq C N^{- 2(1 + \xi)\left(\frac{C_2}{A} (1 + \eta) + \alpha\right)}
\ee
and thus
\be
P_N \leq \exp{ \left (- C N^{1 - 2(1 + \xi)(\frac{C_2}{A} (1 + \eta) + \alpha)} \right )} .
\ee

The proof is complete if we can choose $\alpha, \eta, \xi$ such that
\be \label{rel_param}
2(1 + \xi) \left (\frac{C_2}{A} (1 + \eta) + \alpha \right ) < 1 .
\ee
This can be done by choosing 
\be
0 < \eta = \xi < \sqrt{\frac{A}{2 C_2}} - 1,
\ee
which is possible since $A > 2 C_2$. Then by construction
\be
2(1 + \xi) \frac{C_2}{A} (1 + \eta) < 1
\ee
so it is possible to choose an $\alpha > 0$ satisfying the given relation \eqref{rel_param}. Thus $\sum_N P_N$ is finite for this choice of $\alpha$, which completes the proof.

\end{proof}

\subsection{Typicality in higher dimensions}

\begin{proof}[Proof of Proposition~\ref{Prop_typ_high}]

We consider
\be
P_N : = \PP \left(W^2_2(\mu^N_\e (0), f_{0,\e}) \geq C \e^{- 2 \gamma} r^{d + 2 + \eta}\right) .
\ee
Our goal is to show that $\sum_N P_N$ is finite for some choice of $C$ and $\eta$; this will imply the desired result by the Borel-Cantelli lemma. We wish to estimate $P_N$ using the concentration inequality and scaling argument described above.

By Lemma~\ref{lem_scale},
\be
P_N \leq  \PP \left(\e^{- 2 \gamma} W^2_2(\s_{\e^{- \gamma}} [\mu^N_\e (0)], \s_{\e^{- \gamma}}[f_{0,\e}]) \geq C \e^{- 2 \gamma} r^{d + 2 + \eta}\right) .
\ee
Since $\s_{\e^{- \gamma}} [\mu^N_\e (0)]$ has the same law (as a random measure) as the empirical measure of $N$ independent samples from $\s_{\e^{- \gamma}}[f_{0,\e}]$, and the scaled measures have support contained in $[-1, 1]^{2d}$, we may apply Theorem~\ref{thm_FG} to deduce the existence of constants $\kappa, c, C$ depending only on $d$ such that
\be \label{conc_data}
\PP \left(W_2^2 (\s_{\e^{- \gamma}}[\mu^N_{0, \eps}], \s_{\e^{- \gamma}}[f_{0, \eps}]) \geq \kappa r^{d + 2 + \eta} \right) \leq C \left \{ \begin{array}{cc}
\exp{\left(-cN \left(\frac{r^{4 + \eta} }{\log{(2 + r^{-(4 + \eta)} }}\right)^2\right)} & \text{for } d=2 \\
\exp{\left(-cN r^{3(5 + \eta)}\right)} &  \text{for } d=3 .
\end{array} \right.
\ee

Using this we deduce that 
\be
P_N \leq C \left \{ \begin{array}{cc}
\exp{\left(-cN^{8 \alpha + \eta\left(2 \alpha - \frac{1}{4}\right)} \left({\log{\left(2 + N^{- \frac{1}{2} + 4 \alpha + \eta\left(\alpha - \frac{1}{8}\right)}\right) }}\right)^{-2}\right)} & \text{for } d=2 \\
\exp{\left(-cN^{15 \alpha + \eta(3 \alpha - \frac{1}{5}) }\right)} &  \text{for } d=3 .
\end{array} \right.
\ee

Thus $\sum_N P_N$ is finite for 
\be
\eta < \alpha \frac{d(d+2)^2}{1 - \alpha d(d+2)},
\ee
which completes the proof.

\end{proof}

 {\it Acknowledgments:}   The authors would like to thank Cl\'ement Mouhot for interesting discussions during the course of this project and the second author is also grateful to Fran\c cois Golse for helpful discussions on the mean field limit for the Vlasov-Poisson equation. The first author was supported by the UK Engineering and Physical Sciences Research Council (EPSRC) grant EP/L016516/1 for the University of Cambridge Centre for Doctoral Training, the Cambridge Centre for Analysis. The second author would also like to acknowledge the L'Or\'eal Foundation for partially supporting this project by awarding the L'Or\'eal-UNESCO \emph{For Women in Science France fellowship}.

\bibliographystyle{abbrv}
\bibliography{M-M-3}

\end{document}